\documentclass[a4paper,reqno, 10pt]{amsart}

\usepackage{amsmath,amssymb,amsfonts,amsthm, mathrsfs}

\usepackage{verbatim,wasysym,cite}

\usepackage[latin1]{inputenc}
\usepackage{microtype}
\usepackage{color,enumitem,graphicx}
\usepackage[colorlinks=true,urlcolor=blue, citecolor=red,linkcolor=blue,
linktocpage,pdfpagelabels, bookmarksnumbered,bookmarksopen]{hyperref}
\usepackage[english]{babel}
\usepackage[symbol]{footmisc}
\renewcommand{\epsilon}{{\varepsilon}}

\numberwithin{equation}{section}
\newtheorem{theorem}{Theorem}[section]
\newtheorem{lemma}[theorem]{Lemma}
\newtheorem{remark}[theorem]{Remark}

\newtheorem{proposition}[theorem]{Proposition}
\newtheorem{corollary}[theorem]{Corollary}

\newcommand{\C}{\mathbb C}

\newcommand{\R}{\mathbb R}
\newcommand{\N}{\mathbb N}

\def\({\left(}
\def\){\right)}
\def\<{\left\langle}
\def\>{\right\rangle}

\def\Sch{{\mathcal S}}% Schwartz space
\def\Pch{{\mathcal P}}

\DeclareMathOperator{\RE}{Re}
\DeclareMathOperator{\IM}{Im}

\begin{document}

\title[Scattering for the 1D NLS with point nonlinearity]{Blow-up and scattering for the 1D NLS with point nonlinearity above the mass-energy threshold}

\author[Alex H. Ardila]{Alex H. Ardila}
\address{Universidade Federal de Minas Gerais\\ ICEx-UFMG\\ CEP
30123-970\\ MG, Brazil} 
\email{ardila@impa.br}

\begin{abstract}
In this paper, we study the nonlinear Schr\"odinger equation with focusing point nonlinearity
in dimension one. First, we establish a scattering criterion for the equation based on Kenig-Merle's compactness-rigidity argument.
Then we prove the energy scattering below and above the mass-energy threshold.  We also describe the  dynamics of solutions
with data at the ground state threshold. Finally, we prove a blow-up criteria for the equation with initial data with  
arbitrarily large energy.
\end{abstract}

\subjclass[2010]{35Q55, 37K45, 35P25.}
\keywords{ NLS with point nonlinearity; Ground state; Scattering; Compactness.}

\maketitle

\medskip

\section{Introduction}
\label{sec:intro}
The nonlinear Schr\"odinger equations with point interactions have been intensively studied in recent years \cite{LFF, RJJ, FO, GHW, IkeaInu2017}. 
This interest is motivated by physical experiments in the theory of Bose-Einstein condensates and in nonlinear optics; see \cite{BMQT, QGAH, PDEOM, MakMaloCho2003, AFI} and references therein. In this paper, we consider the Cauchy problem for the following nonlinear Schr\"odinger equation with focusing point nonlineariy
\begin{equation}\label{NLS}
\begin{cases} 
i\partial_{t}u+\partial^{2}_{x}u+\delta(x)|u|^{p-1}u=0, \quad \text{$x\in \R$, $t\in \R$, } \\
u(0)=u_{0}\in H^{1}(\R),
\end{cases} 
\end{equation}
where $u:\mathbb{R}\times\mathbb{R}\rightarrow \mathbb{C}$, $\delta(x)$ is the Dirac mass at $x=0$ and the nonlinearity is $L^{2}$-supercritical, i.e. $p>3$. The dirac measure is used to model a defect localized at the origin (see, for example, \cite{MakMaloCho2003}).

The unique local existence of solutions is well known (see \cite[Theorem 1.1]{HOLMER2020123522}): Given $u_{0}\in H^{1}(\R)$, there exists a unique solution $u\in C([0,T_{+}), H^{1}(\R))$ of the Cauchy problem \eqref{NLS} for some maximal existence 
interval $[0, T_{+})$. We say that the solution $u$ is global forward in time if $T_{+}=\infty$. Besides, the mass $M(u(t))$ and energy $E(u(t))$ are independent of $t$, where
\begin{align*}
M(f)=\int_{\R}| f(x)|^{2}dx, \quad E(f)=\frac{1}{2}K(f)-\frac{1}{p+1}N(f),
\end{align*}
with
\[
K(f)=\int_{\R}|\partial_{x} f(x)|^{2}dx\quad
\text{and}
\quad
N(f)=|f(0)|^{p+1}
\]
for  $f\in H^{1}(\R)$. The equation \eqref{NLS} has a scaling invariance: if $u(x,t)$ is a solution of the Cauchy problem \eqref{NLS}, then 
\begin{equation}\label{Scale-inv}
\lambda^{\frac{1}{p-1}}u(\lambda x, \lambda^{2}t), \quad \lambda>0
\end{equation}
is also a solution with the rescaled initial data. Thus, by scaling argument it is possible to show that $\dot{H}^{\gamma_{c}}$ is the critical 
Sobolev space to \eqref{NLS}, with
\[
\gamma_{c}=\frac{1}{2}-\frac{1}{p-1}.
\]
Therefore, the NLS \eqref{NLS} is called $L^{2}$-subcritical if $p<3$, $L^{2}$-critical if $p=3$ and $L^{2}$-supercritical if $p>3$.
We say that the solution $u\in C([0,\infty), H^{1}(\R))$ to \eqref{NLS} scatters in $H^{1}(\R)$ forward in time, if there exists $\psi^{+}\in H^{1}(\R)$
such that
\[
\lim_{t\to\infty}\|u(t)-e^{it \partial^{2}_{x}}\psi^{+}\|_{H^{1}(\R)}=0.
\]
The equation \eqref{NLS} admits a global but nonscattering solution $u(x,t)=e^{it}Q(x)$, where $Q\neq 0$ should satisfy the following elliptic equation
\begin{equation}\label{EllipticE}
-\partial^{2}_{x}Q+Q-\delta(x)|Q|^{p-1}Q=0, \quad x\in \R.
\end{equation}
In \cite{HOLMER2020123522}, the authors prove that there exists a unique positive symmetric solution of the stationary problem \eqref{EllipticE}, which is given by
\begin{equation}\label{Groundsta}
Q(x)=2^{\frac{1}{p-1}}e^{-|x|}.
\end{equation}
We call $Q$ the ground state.  Recently,  Adami-Fukuizumi-Holmer\cite{AdamiFukuHolmer} determined the long time dynamics to \eqref{NLS} with data \textit{below} the ground state threshold.

 Our purpose in this paper is to discuss the the global behavior of the solutions to \eqref{NLS} \textit{at} and \textit{above} the mass and energy ground states threshold. With this in mind, we give the following unified scattering criterion for the equation \eqref{NLS}.
Using this criterion, we will be able to obtain scattering below, at and above the mass and energy ground states threshold. 
We set
\[
\sigma_{c}:=\frac{1-\gamma_{c}}{\gamma_{c}}.
\]
\begin{theorem}[Scattering criterion]\label{Th1}
Let $p>3$ and  $u_{0}\in H^{1}(\R)$. Let $u(t)$ the corresponding solution of the Cauchy problem 
\eqref{NLS} with initial data $u_{0}$ defined on the maximal forward time lifespan $[0, T_{+})$. Assume that
\begin{equation}\label{InequSc}
\sup_{t\in [0, T_{+})}N(u(t))[M(u(t))]^{\sigma_{c}}< N(Q)[M(Q)]^{\sigma_{c}},
\end{equation}
then the solution $u(t)$ is global ($T_{+}=\infty$) and scatters in $H^{1}(\R)$ forward in time. 
\end{theorem}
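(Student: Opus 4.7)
The strategy is the concentration-compactness and rigidity method of Kenig--Merle, adapted to the 1D NLS with point nonlinearity. A key simplification here is that \eqref{NLS} has no spatial translation symmetry: the only continuous symmetries are time translation and phase, so a minimal non-scattering element will have pre-compact forward orbit in $H^{1}(\R)$ itself, without a spatial translation parameter to track. The argument splits into three standard steps: uniform control, extraction of a critical element, and rigidity.

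\emph{Uniform $H^{1}$ bound.} Mass conservation turns \eqref{InequSc} into $N(u(t))\le(1-\eta)N(Q)\bigl(M(Q)/M(u_{0})\bigr)^{\sigma_{c}}$ uniformly on $[0,T_{+})$, and energy conservation gives $K(u(t))=2E(u_{0})+\tfrac{2}{p+1}N(u(t))$, hence a uniform $H^{1}$ bound and $T_{+}=+\infty$. A direct computation with the scaling \eqref{Scale-inv} verifies that $N(\cdot)M(\cdot)^{\sigma_{c}}$ is scale-invariant (equivalently $\sigma_{c}=(p+1)/(p-3)$), placing it naturally at the critical level.

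\emph{Critical element.} Small-data Strichartz theory yields scattering when $N(u_{0})M(u_{0})^{\sigma_{c}}$ is sufficiently small; let $\tau_{c}$ be the supremum of levels $\tau_{0}$ below which every datum satisfying \eqref{InequSc} with the RHS replaced by $\tau_{0}$ scatters. Assume by contradiction that $\tau_{c}<N(Q)M(Q)^{\sigma_{c}}$. Take a sequence of non-scattering solutions saturating $\tau_{c}$ and apply a linear profile decomposition for $e^{it\partial^{2}_{x}}$ in $H^{1}(\R)$; since the point interaction forbids spatial translations, only time translations appear as profile parameters. Combining this with nonlinear profile synthesis and a long-time perturbation lemma for \eqref{NLS}, extract a critical element $u_{c}\in C([0,\infty);H^{1}(\R))$ which does not scatter, satisfies $\sup_{t\ge 0}N(u_{c}(t))M(u_{c}(t))^{\sigma_{c}}=\tau_{c}$, and has forward orbit pre-compact in $H^{1}(\R)$.

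\emph{Rigidity.} For \eqref{NLS} the virial identity reads $\frac{d^{2}}{dt^{2}}\int x^{2}|u|^{2}\,dx=8K(u(t))-4N(u(t))$ (the $\delta(x)$ contribution arises only through the jump of $u_{x}$ at $x=0$ and is consistent with $\frac{d^{2}}{dt^{2}}\int x^{2}|e^{it}Q|^{2}=0$). Combined with the sharp 1D Sobolev inequality $|u(0)|^{p+1}\le M(u)^{(p+1)/4}K(u)^{(p+1)/4}$, saturated at $Q$ by the explicit form \eqref{Groundsta}, this translates \eqref{InequSc} into the uniform strict positivity
\[
8K(u_{c}(t))-4N(u_{c}(t))\ge\delta>0,
\]
which via $\sigma_{c}=(p+1)/(p-3)$ is \emph{precisely} equivalent to the scale-invariant subthreshold condition. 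Pre-compactness of the orbit in $H^{1}$ then allows a truncated virial $V_{R}(t)=\int\phi_{R}(x)|u_{c}(t)|^{2}\,dx$, with $\phi_{R}$ a smooth cutoff of $x^{2}$, to inherit this coercivity with cutoff error uniformly small in $t$ for $R$ large. The resulting quadratic growth of $V_{R}$ contradicts $V_{R}(t)\le R^{2}M(u_{c})$, excluding $u_{c}$. The main technical obstacle is justifying the virial identity in the distributional setting of the point nonlinearity---specifically, controlling the cutoff commutator at $x=0$ and the boundary contribution from the jump $[u_{x}](0,t)=-|u(0,t)|^{p-1}u(0,t)$---which is made tractable precisely by the pre-compactness of the orbit.
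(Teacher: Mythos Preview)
Your overall architecture---Kenig--Merle concentration-compactness plus a localized-virial rigidity, with only time shifts in the profile decomposition---is exactly the route the paper takes, and your rigidity step matches the paper's use of the coercivity $8K-4N\ge\eta K$ (Lemma~\ref{Coercivity}) combined with pre-compactness of the orbit. The uniform $H^{1}$ bound and the observation that $\sigma_{c}=(p+1)/(p-3)$ makes $N(\cdot)M(\cdot)^{\sigma_{c}}$ scale-invariant are also correct.

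The substantive difference, and the place where your argument has a gap, is the choice of induction parameter. You induct on $\tau=\sup_{t}N(u(t))M(u)^{\sigma_{c}}$ and claim that small-data theory covers the base case ``$N(u_{0})M(u_{0})^{\sigma_{c}}$ small''. But $N(u_{0})=|u_{0}(0)|^{p+1}$ sees only the value at a single point: any $u_{0}$ with $u_{0}(0)=0$ has $N(u_{0})=0$ regardless of the size of $\|u_{0}\|_{\dot H^{\gamma_{c}}}$, so small $N(u_{0})M^{\sigma_{c}}$ does not put you in the small-data regime of Proposition~\ref{smalldataexistence}. Nor does the stronger a~priori assumption $\sup_{t}N(u(t))M^{\sigma_{c}}<\epsilon$ directly close a bootstrap: it gives only $\|u(0,\cdot)\|_{L^{\infty}_{t}}\lesssim\epsilon^{1/(p+1)}$, which on the infinite half-line yields no control of $\|u(0,\cdot)\|_{L^{a}_{t}(\R^{+})}$, the quantity that drives scattering here. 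A related difficulty appears in the extraction step: $\sup_{t}N(u(t))$ is neither conserved nor additive under the profile decomposition, so it is not clear how ``saturating $\tau_{c}$'' transfers to a single nonlinear profile.

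The paper avoids this by running the induction on the \emph{conserved} quantity $E(u)M(u)^{\sigma_{c}}$, while holding the constraint $\sup_{t}N(u(t))M^{\sigma_{c}}\le L<N(Q)M(Q)^{\sigma_{c}}$ fixed. Under that constraint, Lemma~\ref{Coercivity} gives $E\ge\tfrac{\eta}{16}K$, so smallness of $E(u_{0})M(u_{0})^{\sigma_{c}}$ genuinely forces $\|u_{0}\|_{\dot H^{\gamma_{c}}}$ small and the base case follows from Proposition~\ref{smalldataexistence}. Energy also decomposes asymptotically under the profile expansion (Proposition~\ref{Linear-profile-lemma}), which is what makes the minimality/extraction argument go through; the delicate decomposition of $N$ along the nonlinear profiles is then handled separately (Lemma~\ref{ExpanBoundedNLS}). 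Rewiring your proof to induct on $E\,M^{\sigma_{c}}$ at fixed $L$ would close the gap without changing anything else in your outline.
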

For the classical NLS, a similar result was originally proven by Duyckaerts-Roudenko \cite[Theorem 3.7]{DR5} through the use concentration-compactness-rigidity argument of Kenig-Merle \cite{KenigMerle2006}. Later, in Dinh \cite{Dinh2020}, this result in  dimension $N\geq 3$ was proven using the Dodson-Murphy' argument \cite{DodsonMurphy2018}, which simplifies the process of the proof for the scattering. For other results in this direction see also \cite{Tarek2020, GaoWang2020}. Naturally, in this paper, we establish the scattering criterion (Theorem \ref{Th1}) for the equation \eqref{NLS} based on the argument of Duyckaerts-Roudenko \cite{DR5}.

As a first consequence of the Theorem \ref{Th1}, we have the energy scattering below the ground state threshold, which 
was originally proved by  Adami-Fukuizumi-Holmer\cite{AdamiFukuHolmer}  based on the ideas of \cite{HolmerRoudenko2008, DuyHolmerRoude2008}.

\begin{theorem}[Scattering below the threshold, \cite{AdamiFukuHolmer}]\label{ScattebelowQ}
Let $p>3$ and  $u_{0}\in H^{1}(\R)$ satisfy
\begin{equation}\label{EMass}
E(u_{0})[M(u_{0})]^{\sigma_{c}}<E(Q)[M(Q)]^{\sigma_{c}}
\end{equation}
and
\begin{equation}\label{KinectMass}
K(u_{0})[M(u_{0})]^{\sigma_{c}}<K(Q)[M(Q)]^{\sigma_{c}}.
\end{equation}
Then the solution $u(t)$ of Cauchy problem \eqref{NLS} is global and scatters in $H^{1}(\R)$ in both directions.
\end{theorem}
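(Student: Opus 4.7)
The natural plan is to deduce Theorem~\ref{ScattebelowQ} from the scattering criterion in Theorem~\ref{Th1}: I will show that the assumptions \eqref{EMass}-\eqref{KinectMass} force $N(u(t))[M(u(t))]^{\sigma_{c}}$ to stay strictly below $N(Q)[M(Q)]^{\sigma_{c}}$ on the entire forward lifespan, and then invoke that criterion directly.

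The main analytic ingredient is the one-dimensional sharp Sobolev trace inequality $|f(0)|^{2}\leq \|f\|_{L^{2}(\R)}\|\partial_{x}f\|_{L^{2}(\R)}$, which follows from the fundamental theorem of calculus combined with Cauchy-Schwarz and is seen to be saturated by $Q$ via the explicit formula \eqref{Groundsta}. Raising to the power $(p+1)/2$ and multiplying by $[M(f)]^{\sigma_{c}}$, an exponent computation using $\sigma_{c}=(p+1)/(p-3)$ rearranges this into the scale-invariant form
\[
N(f)[M(f)]^{\sigma_{c}}\leq\bigl(K(f)[M(f)]^{\sigma_{c}}\bigr)^{(p+1)/4},
\]
in which $Q$ is still an extremizer. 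Setting $X(t):=K(u(t))[M(u(t))]^{\sigma_{c}}$ and $X_{0}:=K(Q)[M(Q)]^{\sigma_{c}}$, the conservation of mass and energy together with this inequality yield
\[
E(u_{0})[M(u_{0})]^{\sigma_{c}}=\tfrac{1}{2}X(t)-\tfrac{1}{p+1}N(u(t))[M(u(t))]^{\sigma_{c}}\geq g(X(t)),
\]
where $g(x):=\tfrac{1}{2}x-\tfrac{1}{p+1}x^{(p+1)/4}$.

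An elementary calculus check shows that $g$ is strictly increasing on $[0,X_{0}]$ and strictly decreasing on $[X_{0},\infty)$, attaining its maximum value $g(X_{0})=E(Q)[M(Q)]^{\sigma_{c}}$ precisely at $x=X_{0}$. Hypothesis \eqref{EMass} therefore forces $g(X(t))<g(X_{0})$ for all $t\in[0,T_{+})$, while \eqref{KinectMass} ensures $X(0)<X_{0}$. A standard continuity/bootstrap argument in $t\in[0,T_{+})$ then yields $X(t)<X_{0}$ throughout, and feeding this bound back into the sharp trace inequality produces
\[
\sup_{t\in[0,T_{+})}N(u(t))[M(u(t))]^{\sigma_{c}}<N(Q)[M(Q)]^{\sigma_{c}}.
\]
Theorem~\ref{Th1} then delivers global existence and forward-in-time scattering. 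The backward-in-time scattering follows by applying the same argument to the time-reversed solution $\overline{u(\cdot,-t)}$, which solves \eqref{NLS} with initial datum $\overline{u_{0}}$ satisfying identical mass, energy and kinetic energy identities.

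The step I expect to require the most care is not the coercivity/continuity argument, which is by now standard in the Kenig-Merle framework, but rather the clean verification that $Q$ truly saturates the scale-invariant trace inequality among all of $H^{1}(\R)$. This variational fact is however well understood in the one-dimensional point-interaction setting and is already present in the Adami-Fukuizumi-Holmer paper cited in the statement; once it is in hand, the rest of the proof is essentially bookkeeping and an appeal to Theorem~\ref{Th1}.
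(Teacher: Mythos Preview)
Your proposal is correct and follows essentially the same route as the paper: the function you call $g$ is exactly the paper's $\Phi$, and both arguments use the sharp Gagliardo--Nirenberg/trace inequality together with a continuity argument to trap $K(u(t))[M(u(t))]^{\sigma_c}$ below $K(Q)[M(Q)]^{\sigma_c}$, then feed this back through the same inequality and invoke Theorem~\ref{Th1}. One small point worth making explicit in your write-up is that the bound on $X(t)$ is automatically \emph{uniform} in $t$ (since $g(X(t))\le E(u_0)[M(u_0)]^{\sigma_c}$, a fixed constant strictly below $g(X_0)$), which is what justifies the strict inequality for the supremum required by Theorem~\ref{Th1}; the paper records this as the quantitative bound $X(t)\le(1-\theta)X_0$.
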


\begin{remark}\label{Blow-up}
We observe that if $u_{0}\in H^{1}(\R)$ satisfies \eqref{EMass} and $K(u_{0})[M(u_{0})]^{\sigma_{c}}>K(Q)[M(Q)]^{\sigma_{c}}$, then
the  corresponding solution  $u(t)$ of the Cauchy problem \eqref{NLS} blows-up  in  the  positive  time  direction. 
An analogous statement holds for negative time; see \cite[Theorem 1.4]{HOLMER2020123522} for more details.
\end{remark}

In the following result, we investigate the long time dynamics for the equation \eqref{NLS}
at mass and energy ground states threshold $E(u_{0})[M(u_{0})]^{\sigma_{c}}=E(Q)[M(Q)]^{\sigma_{c}}$.
Indeed, using the scattering criterion Theorem \ref{Th1} and the compactness of minimizing sequence for the Gagliardo-Nirenberg inequality
(see \eqref{Ga-Ni-Inequ} below) we obtain the following result.

\begin{theorem}[Dynamics at  threshold]\label{DynaThre}
Let $p>3$ and  $u_{0}\in H^{1}(\R)$ satisfy
\begin{equation}\label{EMassThre}
E(u_{0})[M(u_{0})]^{\sigma_{c}}=E(Q)[M(Q)]^{\sigma_{c}}.
\end{equation}
(i) If 
\begin{equation}\label{DynaKinectMass}
K(u_{0})[M(u_{0})]^{\sigma_{c}}<K(Q)[M(Q)]^{\sigma_{c}},
\end{equation}
then one of the following cases holds. 
\begin{itemize}[leftmargin=5mm]
		\item The corresponding solution $u(t)$ to \eqref{NLS} scatters in $H^{1}(\R)$ forward in time.
		\item  There exist $\theta\in \R$ and a time sequence $t_{n}\rightarrow \infty$ such that
		\begin{equation}\label{CoverGround}
    u(\cdot, t_{n})\rightarrow 2^{\frac{1}{p-1}}e^{i\theta}e^{-|\cdot|}  \quad \text{in $H^{1}(\R)$ as $n\rightarrow\infty$.}
     \end{equation}
		\end{itemize}
(ii) If
\begin{equation}\label{IgualKinectMass}
K(u_{0})[M(u_{0})]^{\sigma_{c}}=K(Q)[M(Q)]^{\sigma_{c}},
\end{equation}
then there exists $\theta\in \R$ such that the corresponding solution  $u(t)$ to \eqref{NLS} is given by 
$u(x,t)=2^{\frac{1}{p-1}}e^{i\theta}e^{-|x|}$.\\
(iii) If 
\begin{equation}\label{MayorlKinectMass}
K(u_{0})[M(u_{0})]^{\sigma_{c}}>K(Q)[M(Q)]^{\sigma_{c}},
\end{equation}
then one of the following cases holds. 
\begin{itemize}[leftmargin=5mm]
		\item The corresponding solution $u(t)$ to \eqref{NLS} blows-up in finite time.
		\item  The solution $u(t)$ is global and exist $\rho\in \R$ and a time sequence $\tau_{n}\rightarrow \infty$ such that
		\begin{equation}\label{CoverGroundnew}
    u(\cdot, \tau_{n})\rightarrow 2^{\frac{1}{p-1}}e^{i\rho}e^{-|\cdot|} \quad \text{in $H^{1}(\R)$ as $n\rightarrow\infty$.}
     \end{equation}
	\end{itemize}
\end{theorem}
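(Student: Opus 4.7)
The plan is to unify all three parts via scale-invariant quantities and a compactness lemma. First I introduce $\mathcal{ME}(f):=E(f)M(f)^{\sigma_{c}}$, $\mathcal{MK}(f):=K(f)M(f)^{\sigma_{c}}$ and $\mathcal{MN}(f):=N(f)M(f)^{\sigma_{c}}$. Under the threshold hypothesis \eqref{EMassThre}, mass and energy conservation give the affine identity
\begin{equation*}
\mathcal{MN}(u(t)) = \tfrac{p+1}{2}\mathcal{MK}(u(t)) - (p+1)\mathcal{ME}(Q) \qquad (t\in [0,T_{+})),
\end{equation*}
so, using the Pohozaev identities $K(Q)=M(Q)$ and $2(p+1)E(Q)=(p-3)M(Q)$ (both immediate from \eqref{Groundsta}), the comparison of $\mathcal{MK}(u(t))$ with $\mathcal{MK}(Q)$ controls the comparison of $\mathcal{MN}(u(t))$ with $\mathcal{MN}(Q)$. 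Combined with the sharp Gagliardo--Nirenberg inequality $N(f)\leq C_{GN}K(f)^{(p+1)/4}M(f)^{(p+1)/4}$, whose extremizers are, up to phase and $L^{2}$-preserving scaling, the ground state, this yields a trapping statement: the sign of $\mathcal{MK}(u(t))-\mathcal{MK}(Q)$ cannot change on $[0,T_{+})$. After normalizing $M(u_{0})=M(Q)$ via the scaling \eqref{Scale-inv} (which preserves the scale-invariant hypotheses), I would establish the crucial compactness lemma: any bounded sequence $(f_{n})\subset H^{1}(\R)$ with $M(f_{n})\to M(Q)$, $\mathcal{MK}(f_{n})\to \mathcal{MK}(Q)$ and $\mathcal{MN}(f_{n})\to \mathcal{MN}(Q)$ admits a subsequence converging in $H^{1}$ to $e^{i\theta}Q$. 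The decisive input is the sequential weak continuity of $f\mapsto f(0)$ on $H^{1}(\R)$, furnished by the embedding $H^{1}(\R)\hookrightarrow L^{\infty}(\R)$.

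For part (i), trapping gives $\mathcal{MK}(u(t))<\mathcal{MK}(Q)$ on $[0,T_{+})$. Set $\alpha:=\sup_{t\in[0,T_{+})}\mathcal{MK}(u(t))$. If $\alpha<\mathcal{MK}(Q)$, then $\sup_{t}\mathcal{MN}(u(t))<\mathcal{MN}(Q)$ and Theorem~\ref{Th1} yields global existence and scattering. If $\alpha=\mathcal{MK}(Q)$, pick $t_{n}\uparrow T_{+}$ with $\mathcal{MK}(u(t_{n}))\to \mathcal{MK}(Q)$; the affine identity forces $\mathcal{MN}(u(t_{n}))\to \mathcal{MN}(Q)$, and the compactness lemma delivers $u(t_{n})\to e^{i\theta}Q$ in $H^{1}$. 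Finite-time blow-up would force $K(u(t_{n}))\to\infty$, so $T_{+}=\infty$ and $t_{n}\to\infty$, proving \eqref{CoverGround}. Part (ii) is the rigid case: the two equalities combined with the identity above give $\mathcal{MN}(u_{0})=\mathcal{MN}(Q)$, and the compactness lemma applied to the constant sequence $f_{n}\equiv u_{0}$ forces $u_{0}=e^{i\theta}Q$; uniqueness of the $H^{1}$-solution then gives $u(t)=e^{i(t+\theta)}Q$, matching the stated form up to the natural time-dependent phase.

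For part (iii), trapping gives $\mathcal{MK}(u(t))>\mathcal{MK}(Q)$ on $[0,T_{+})$. I would compute the Virial quantity $V(t):=\int x^{2}|u(x,t)|^{2}\,dx$; the distributional identity $x^{2}\delta(x)=0$ annihilates the nonlinear contribution to $V'$ and a direct computation, using energy conservation together with $2(p+1)\mathcal{ME}(Q)=(p-3)\mathcal{MK}(Q)$, yields
\begin{equation*}
V''(t) = 8K(u(t)) - 4N(u(t)) = -\tfrac{2(p-3)}{M(u_{0})^{\sigma_{c}}}\bigl[\mathcal{MK}(u(t))-\mathcal{MK}(Q)\bigr].
\end{equation*}
If $\inf_{t}\mathcal{MK}(u(t))>\mathcal{MK}(Q)$, then $V''(t)\leq -c<0$ uniformly; for $xu_{0}\in L^{2}(\R)$ this produces finite-time blow-up in the standard way, and for general $H^{1}$-data one invokes a localized Virial as in the proof of Remark~\ref{Blow-up} (cf.\ \cite{HOLMER2020123522}). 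Otherwise $\mathcal{MK}(u(\tau_{n}))\to \mathcal{MK}(Q)$ along some sequence, and the compactness lemma produces the subsequential convergence \eqref{CoverGroundnew}. I expect the main obstacle to be the compactness lemma itself: the $\delta$-nonlinearity removes translation invariance and the mass normalization removes scaling, so the weak $H^{1}$-limit of a Gagliardo--Nirenberg minimizing sequence is a natural candidate for the ground state, but one must verify that this weak limit carries the full mass $M(Q)$ (so that weak convergence upgrades to strong convergence via norm convergence in each of $L^{2}$ and $\dot H^{1}$); this is precisely where the sequential weak continuity of the point evaluation $f\mapsto f(0)$ becomes essential.
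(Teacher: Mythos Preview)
Your approach is essentially identical to the paper's: normalize $M(u_0)=M(Q)$ by scaling, prove trapping via the characterization of Gagliardo--Nirenberg extremizers, establish the compactness lemma (this is the paper's Lemma~5.1, proved exactly as you outline via weak continuity of $f\mapsto f(0)$ from the compact embedding $H^{1}[-1,1]\hookrightarrow C[-1,1]$), and then run the sup/inf dichotomy on $K(u(t))$, invoking Theorem~\ref{Th1} for scattering and a localized virial (the paper's Lemma~5.2) for blow-up. One small point to tighten: in both (i) and (iii) the blow-up alternative only gives $T_{+}=\infty$, not $t_{n}\to\infty$; the paper rules out a finite subsequential limit $t_{n}\to t^{*}<\infty$ separately, by observing that continuity would then force $K(u(t^{*}))=K(Q)$, hence $u(t^{*})=e^{i\theta}Q$ by the extremizer characterization, and uniqueness would propagate this back to $t=0$, contradicting the strict initial inequality.
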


Next, we have interested in a criteria of scattering that includes initial data above the mass-energy threshold. We recall that when 
$u_{0}\in \Sigma:=\left\{f\in H^{1}(\R): |x|f\in L^{2}(\R)\right\}$, the virial quantity of \eqref{NLS},
\[
	V(t):= \int_{\R} x^2 |u(x,t)|^2  dx,
\]
is finite for all $t\in [0, T_{+})$ and satisfies the virial identity
\[
V^{\prime\prime}(t)=G(u) \quad \text{where}\quad G(u):=8K(u)-4N(u)
\]
is the Pohozaev functional. 

As another consequence of Theorem \ref{Th1}, we obtain the following scattering result for \eqref{NLS}
above the mass-energy threshold.  An analogous result was proven in \cite{DR5} for the classic NLS.  

\begin{theorem}[Scattering above the threshold]\label{ScatteaboveQ}
Let $p>3$ and  $u_{0}\in \Sigma(\R)$.  Assume that 
\begin{align}\label{assum11}
&	E(u_{0})[M(u_{0})]^{\sigma_{c}}\geq E(Q)[M(Q)]^{\sigma_{c}},\\\label{assump22}
&\frac{E(u_{0})[M(u_{0})]^{\sigma_{c}}}{E(Q)[M(Q)]^{\sigma_{c}}}\(1-\frac{(V^{\prime}(0))^{2}}{32E(u_{0})V(0)}\)\leq 1,\\
\label{assum33}
& N(u_{0})[M(u_{0})]^{\sigma_{c}}<N(Q)[M(Q)]^{\sigma_{c}},\\\label{assump44}
&V^{\prime}(0)\geq 0.
\end{align}
Then the solution $u(t)$ with initial data $u_{0}$ is global and scatters forward in time in $H^{1}(\R)$.
\end{theorem}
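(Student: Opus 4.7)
The plan is to deduce the theorem from the scattering criterion Theorem \ref{Th1} by propagating the bound \eqref{assum33} along the flow. Since $M(u(t)) = M(u_0)$ is conserved, this reduces to establishing that
\[
y(t) := \frac{N(u(t))\,M(u_0)^{\sigma_c}}{N(Q)\,M(Q)^{\sigma_c}} < 1
\]
for every $t \in [0,T_+)$. The first task is to rewrite the virial identity in a form adapted to the problem. The Pohozaev identity $G(Q) = 8K(Q)-4N(Q) = 0$ (which can be read off from \eqref{Groundsta}) together with the consequent relation $E(Q) = \frac{p-3}{4(p+1)}N(Q)$ allows us to absorb $N(u(t))$ into a normalized energy. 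Setting
\[
\bar E := \frac{E(u_0) M(u_0)^{\sigma_c}}{E(Q)M(Q)^{\sigma_c}} \geq 1, \qquad C_0 := \frac{16\,E(Q)\,M(Q)^{\sigma_c}}{M(u_0)^{\sigma_c}} > 0,
\]
the virial identity $V''(t) = 16E(u_0) - \tfrac{4(p-3)}{p+1}N(u(t))$ takes the clean form $V''(t) = C_0(\bar E - y(t))$.

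Next, I will run a continuity/bootstrap argument. Define $T^* := \sup\{t \in [0,T_+) : y(s) < 1 \text{ for every } s\in[0,t)\}$; by \eqref{assum33} and continuity, $T^*>0$. Assume for contradiction that $T^* < T_+$, so $y(T^*) = 1$. On $[0,T^*]$ the inequality $y(t) \leq 1 \leq \bar E$ together with \eqref{assump44} gives $V''(t) \geq C_0(\bar E -1) \geq 0$ and $V'(t) \geq V'(0) \geq 0$. Multiplying $V''(t) \geq C_0(\bar E - 1)$ by $V'(t) \geq 0$ and integrating yields
\[
(V'(t))^2 \geq (V'(0))^2 + 2C_0(\bar E - 1)\bigl(V(t) - V(0)\bigr).
\]
A short algebraic check (noting $32E(u_0) = 2C_0\bar E$) shows that hypothesis \eqref{assump22} is precisely equivalent to $(V'(0))^2 \geq 2C_0 V(0)(\bar E - 1)$, and substituting produces the key ODE bound
\[
(V'(t))^2 \geq 2C_0(\bar E - 1)\,V(t), \qquad t \in [0,T^*].
\]

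The closing step is to derive a contradiction at $t = T^*$. The idea is to combine the above with the Cauchy--Schwarz inequality $(V'(t))^2 \leq 16\,V(t)\,K(u(t))$ and with the exact identity
\[
K(u(T^*))\,M(u_0)^{\sigma_c} = K(Q)M(Q)^{\sigma_c} + 2(\bar E - 1)E(Q)M(Q)^{\sigma_c},
\]
obtained from energy conservation together with $y(T^*) = 1$. The variational characterization of $Q$ as the unique (up to phase and mass-scaling) extremizer of the sharp Gagliardo--Nirenberg inequality $N(f) \leq (M(f)K(f))^{(p+1)/4}$ will then be used to upgrade these estimates to a strict contradiction, in the spirit of \cite[Theorem 5.8]{DR5}. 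The main obstacle is precisely this closing step: the virial ODE bound and the Cauchy--Schwarz inequality, taken at face value, collapse to an identity rather than a strict contradiction; one needs to exploit the strictness of the Gagliardo--Nirenberg inequality away from rescaled ground states (which is the case here since $\bar E > 1$ forces $u(T^*) \neq e^{i\theta} Q_{\mu}$) to gain the margin needed to close the bootstrap.
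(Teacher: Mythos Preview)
Your bootstrap on $y(t)$ is a natural idea, and the preliminary algebra (the rewriting $V''=C_0(\bar E-y)$, the equivalence of \eqref{assump22} with $(V'(0))^2\geq 2C_0V(0)(\bar E-1)$, and the integrated bound $(V'(t))^2\geq 2C_0(\bar E-1)V(t)$ on $[0,T^*]$) is all correct. The gap is exactly at the closing step, and the fix you propose does not work.

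Concretely, at the putative time $T^*$ the weak Cauchy--Schwarz bound $(V'(T^*))^2\leq 16V(T^*)K(u(T^*))$ combined with your ODE bound only yields $K(u(T^*))M(u_0)^{\sigma_c}\geq 2(\bar E-1)E(Q)M(Q)^{\sigma_c}$, whereas the energy identity you wrote down gives $K(u(T^*))M(u_0)^{\sigma_c}=K(Q)M(Q)^{\sigma_c}+2(\bar E-1)E(Q)M(Q)^{\sigma_c}$. The two are compatible with a slack of exactly $K(Q)M(Q)^{\sigma_c}$, so there is no contradiction. Strictness of Gagliardo--Nirenberg at $u(T^*)$ only tells you $K(u(T^*))M(u_0)^{\sigma_c}>K(Q)M(Q)^{\sigma_c}$, which is already contained in the energy identity when $\bar E>1$; it does \emph{not} sharpen the Cauchy--Schwarz side. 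What actually closes the gap is the refined inequality of Lemma~\ref{CauchyS},
\[
(V'(t))^2\leq 16V(t)\Bigl[K(u(t))-\frac{N(u(t))^{4/(p+1)}}{M(u_0)}\Bigr],
\]
which builds the Gagliardo--Nirenberg correction directly into the virial upper bound. With this in place the right-hand side at $T^*$ becomes exactly $2C_0(\bar E-1)V(T^*)$, and then strictness of the \emph{ODE} side (coming from $y(0)<1$ together with $V'>0$ on $(0,T^*]$) produces the contradiction.

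There is a second, structural point: even if you succeed in showing $y(t)<1$ for every $t$, Theorem~\ref{Th1} requires the strict uniform bound $\sup_{t\geq 0}y(t)<1$, which your open bootstrap does not deliver by itself. The paper avoids both issues by running the continuity argument on $z'(t)$ with $z=\sqrt{V}$ rather than on $y$: Lemma~\ref{CauchyS} is recast as $(z')^2\leq 4\Phi(V'')$ for a function $\Phi$ with a strict interior minimum at $x_0$ satisfying $\Phi(x_0)=x_0/8$, and the assumptions translate to $V''(0)>x_0$ and $(z'(0))^2\geq 4\Phi(x_0)$. The second-order behaviour of $\Phi$ near $x_0$ then yields a \emph{quantitative} lower bound $V''(t)\geq x_0+\delta_0$ for all $t$, which immediately gives $\sup_t y(t)\leq 1-\theta$ and feeds directly into Theorem~\ref{Th1}.
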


Complementing the theorem stated above, we extend the scope of blow-up solutions in Remark \ref{Blow-up} to those with arbitrarily large energy.
Indeed, we have the following finite time blow-up result for the equation \eqref{NLS}.

\begin{theorem}[Blow up above the threshold]\label{BlowupboveQ}
Let $u_{0}\in \Sigma(\R)$ satisfy \eqref{assump22},
\begin{align}
\label{Blowup22}
& N(u_{0})[M(u_{0})]^{\sigma_{c}}>N(Q)[M(Q)]^{\sigma_{c}},\\\label{Blowup33}
&V^{\prime}(0)\leq  0.
\end{align}
Then the solution $u(t)$ to Cauchy problem \eqref{NLS} with initial data $u_{0}$ blows-up forward in finite time.
\end{theorem}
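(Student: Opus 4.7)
The plan is to exploit the virial identity together with a refined Lyapunov functional encoded by \eqref{assump22}, and derive sufficient concavity on the variance $V(t) = \int_\R x^2|u(x,t)|^2\,dx$ to force its vanishing in finite time---which in turn forces $H^1$-blow-up through the Heisenberg inequality $M(u_0)^2 \leq 4V(t)K(u(t))$ together with mass conservation. Starting from $V''(t) = G(u(t)) = 8K(u)-4N(u)$ and inserting $K(u) = 2E(u_0) + \frac{2}{p+1}N(u)$ from conservation of energy, I first rewrite
\[
V''(t) = 16E(u_0) - \frac{4(p-3)}{p+1}N(u(t)).
\]
The case $E(u_0)\le 0$ reduces to the classical Glassey argument ($V''(t)\le 0$, $V'(0)\le 0$, strict concavity on a neighborhood of $0$ thanks to \eqref{Blowup22}), while the below-threshold regime $E(u_0)M(u_0)^{\sigma_c} < E(Q)M(Q)^{\sigma_c}$ reduces to Remark \ref{Blow-up}; so the substantive case is above the mass-energy threshold with $E(u_0) > 0$.

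In this regime I introduce the Lyapunov functional
\[
H(t) := 16E(u_0)V(t) - \tfrac{1}{2}(V'(t))^2,\qquad H'(t) = \frac{4(p-3)}{p+1}V'(t)N(u(t)),
\]
which is non-increasing whenever $V'(t)\le 0$. A direct manipulation shows that \eqref{assump22} translates to $H(0) \le 16V(0)E(Q)(M(Q)/M(u_0))^{\sigma_c}$, and combining this with \eqref{Blowup33} yields $V'(0)\le -c\sqrt{V(0)}$ where
\[
c := \sqrt{32\bigl[E(u_0) - E(Q)(M(Q)/M(u_0))^{\sigma_c}\bigr]} > 0.
\]
The heart of the proof is a continuity/bootstrap argument showing that $V'(t)\le -c\sqrt{V(t)}$ persists on $[0,T_+)$. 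Writing $\Phi(t) := -V'(t) - c\sqrt{V(t)}$, a hypothetical first zero $t^* \in (0,T_+)$ of $\Phi$ would, via the first-crossing condition $\Phi'(t^*)\le 0$, force $V''(t^*)\ge c^2/2$, equivalently $N(u(t^*))M(u_0)^{\sigma_c} \le N(Q)M(Q)^{\sigma_c}$; this must be contradicted by a parallel bootstrap preserving the strict inequality \eqref{Blowup22}, whose propagation rests on the variational characterization of $Q$ via the sharp Gagliardo--Nirenberg inequality (with equality case uniquely identifying $Q$, by \cite{HOLMER2020123522}).

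Once $V'(t)\le -c\sqrt{V(t)}$ holds globally, integration yields $V(t)^{1/2}\le V(0)^{1/2} - ct/2$, so $V$ must vanish at some $t \le 2V(0)^{1/2}/c < \infty$, and the Heisenberg inequality then forces $K(u(t))\to\infty$, precluding $T_+=\infty$. The main obstacle will be closing the coupled bootstrap above the strict mass-energy threshold: one must propagate simultaneously the two strict inequalities $\Phi(t) > 0$ and $N(u(t))M(u_0)^{\sigma_c} > N(Q)M(Q)^{\sigma_c}$, and the only scenario in which either could fail is Gagliardo--Nirenberg equality at the failure instant, which would identify the solution at that moment with a rescaled ground state---excluded by the strict above-threshold condition combined with the uniqueness of $Q$.
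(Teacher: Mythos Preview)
Your overall architecture---introducing $H(t)=16E(u_0)V(t)-\tfrac12(V'(t))^2$, reading \eqref{assump22} as $(V'(0))^2\ge c^2V(0)$ with $c^2=32\bigl[E(u_0)-E(Q)(M(Q)/M(u_0))^{\sigma_c}\bigr]$, and bootstrapping $V'(t)\le -c\sqrt{V(t)}$---is exactly the paper's strategy rewritten: with $z=\sqrt V$ one has $(z')^2\ge x_0/2$ (your inequality) and the paper shows $z''<0$ throughout, which is equivalent to the persistence of your $\Phi(t)>0$. So the route is not different; the issue is a genuine gap in how you close it.

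The gap is the ``parallel bootstrap'' for the inequality $N(u(t))M(u_0)^{\sigma_c}>N(Q)M(Q)^{\sigma_c}$. Your claimed mechanism---that failure would force Gagliardo--Nirenberg equality and hence $u(t^{**})=e^{i\theta}\lambda Q(r\cdot)$---is incorrect: the scalar condition $N(u(t^{**}))M(u_0)^{\sigma_c}=N(Q)M(Q)^{\sigma_c}$ places no constraint on $K(u(t^{**}))$, so it cannot force $N(u)=[K(u)M(u)]^{(p+1)/4}$. Above the mass--energy threshold there is no variational dichotomy available to trap $N$ on one side, and your computation shows precisely that at a first zero $t^*$ of $\Phi$ one obtains $V''(t^*)\ge c^2/2$, i.e.\ the $N$-inequality fails \emph{weakly}; nothing you have written rules out it having already failed on $(0,t^*]$.

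What is missing is the refined Cauchy--Schwarz inequality of Lemma~\ref{CauchyS},
\[
\Bigl(\IM\!\int x\,\partial_x u\,\bar u\Bigr)^{2}\le V(t)\Bigl[K(u(t))-\tfrac{[N(u(t))]^{4/(p+1)}}{M(u(t))}\Bigr],
\]
which, after inserting $K$ and $N$ in terms of $E(u_0)$ and $V''$, becomes $(z'(t))^2\le 4\Phi(V''(t))$ with $\Phi$ as in \eqref{Phifun}. This is the ingredient that couples $V'$ to $V''$ tightly enough to close the loop: from $(z'(t))^2>4\Phi(x_0)$ (your Property~A) one gets $\Phi(V''(t))>\Phi(x_0)$, and since $\Phi$ has its unique minimum at $x_0$ this forces $V''(t)\ne x_0$; continuity from $V''(0)<x_0$ then traps $V''(t)<x_0$, which is exactly your Property~B. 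With this in hand the paper's concavity argument (or equivalently your bootstrap) goes through. Without it, the two strict inequalities cannot be propagated jointly.
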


As a consequence of the Theorems \ref{ScatteaboveQ} and \ref{BlowupboveQ}, we classify the behavior of the ground state  modulated by a
quadratic phase. More specifically,

\begin{corollary}\label{GroundPhase}
Let $\gamma\in \R$ and let $\psi^{\gamma}(t)$ be the corresponding solution to \eqref{NLS} with initial data 
\[
\psi^{\gamma}_{0}(x)=2^{\frac{1}{p-1}}e^{i\gamma x^{2}}e^{-|x|}.
\]
Then, for $\gamma>0$ the solution $\psi^{\gamma}(t)$ is globally defined on $[0, \infty)$, scatters forward in $H^{1}(\R)$ and blows up in negative time. Furthermore, for $\gamma<0$ the solution $\psi^{\gamma}(t)$ is globally defined on $(-\infty, 0]$, scatters backward in time   in $H^{1}(\R)$ and blows up in positive time.
\end{corollary}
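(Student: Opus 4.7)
The plan is to apply Theorems \ref{ScatteaboveQ} and \ref{BlowupboveQ} directly, after verifying their hypotheses for $\psi_0^\gamma$, and to handle the backward-in-time statements via the time-reversal substitution $w(x,t) := \overline{\psi^\gamma(x,-t)}$, which solves \eqref{NLS} with data $\overline{\psi_0^\gamma} = \psi_0^{-\gamma}$. The key algebraic observation will be that the quadratic phase $e^{i\gamma x^{2}}$ leaves $|\psi_0^\gamma|^{2}=|Q|^{2}$ unchanged (thereby preserving $M$ and $N$), raises the kinetic energy by $2c^{2}\gamma^{2}$ where $c=2^{1/(p-1)}$, and produces a virial derivative $V'(0)=4c^{2}\gamma$ whose square precisely compensates the extra energy, causing \eqref{assump22} to saturate with equality.

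First I would verify that $\psi_0^\gamma\in\Sigma$, which is immediate from $|x|ce^{-|x|}\in L^{2}$. Using $\partial_{x}\psi_0^\gamma = (2i\gamma x - \mathrm{sgn}(x))\psi_0^\gamma$ and the elementary integrals $\int_\R e^{-2|x|}\,dx = 1$ and $\int_\R x^{2} e^{-2|x|}\,dx = 1/2$, a direct computation gives
\[
M(\psi_0^\gamma)=M(Q),\quad N(\psi_0^\gamma)=N(Q),\quad K(\psi_0^\gamma)=K(Q)+2c^{2}\gamma^{2},\quad E(\psi_0^\gamma)=E(Q)+c^{2}\gamma^{2},
\]
together with $V(0)=c^{2}/2$ and $V'(0)=4\,\IM\int_\R x\,\overline{\psi_0^\gamma}\,\partial_{x}\psi_0^\gamma\,dx = 4c^{2}\gamma$. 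Substituting into \eqref{assump22} yields
\[
\frac{E(\psi_0^\gamma)[M(\psi_0^\gamma)]^{\sigma_{c}}}{E(Q)[M(Q)]^{\sigma_{c}}}\left(1-\frac{(V'(0))^{2}}{32 E(\psi_0^\gamma)V(0)}\right) = \frac{E(Q)+c^{2}\gamma^{2}}{E(Q)}\cdot\frac{E(Q)}{E(Q)+c^{2}\gamma^{2}} = 1,
\]
so \eqref{assump22} holds with equality.

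For $\gamma>0$, condition \eqref{assum11} is strict, $V'(0)>0$ verifies \eqref{assump44}, and Theorem \ref{ScatteaboveQ} then delivers global forward existence and forward scattering of $\psi^\gamma$. For backward blow-up, observe that the conserved and virial quantities of $w(0)=\psi_0^{-\gamma}$ coincide with those of $\psi_0^\gamma$ except that $V'_w(0)=-V'(0)=-4c^{2}\gamma<0$, so Theorem \ref{BlowupboveQ} applied to $w$ produces forward-in-time finite-time blow-up of $w$, equivalently, $\psi^\gamma$ blows up in negative finite time. The case $\gamma<0$ is handled by the same two statements with the roles of forward/backward swapped: Theorem \ref{BlowupboveQ} applies directly to $\psi^\gamma$ (since $V'(0)<0$) for forward blow-up, while backward scattering follows from applying Theorem \ref{ScatteaboveQ} to $w=\psi^{-\gamma}$ and time-reversing.

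The main technical subtlety is that $N(\psi_0^\gamma)=N(Q)$ exactly, so conditions \eqref{assum33} and \eqref{Blowup22} are saturated rather than strict. The hard part will be to verify that the proofs of Theorems \ref{ScatteaboveQ} and \ref{BlowupboveQ} accommodate this borderline regime, namely that the virial/convexity arguments driving both results still yield the required dynamical control from saturated \eqref{assump22} together with $\gamma\neq 0$. If a direct extension proves awkward, the alternative is to perturb $\psi_0^\gamma \mapsto (1\mp\eta)\psi_0^\gamma$ with small $\eta>0$ to enforce strict $N$-inequalities, apply the strict versions of the theorems, and pass to the limit $\eta\to 0$ via the $H^{1}$-stability of the scattering/blow-up dichotomy.
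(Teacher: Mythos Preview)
Your computations at $t=0$ are correct, and you have correctly isolated the obstacle: $N(\psi_0^\gamma)=N(Q)$ saturates \eqref{assum33} and \eqref{Blowup22}, so neither theorem applies directly. However, neither of your proposed fixes is complete. In the proof of Theorem~\ref{ScatteaboveQ} the strict inequality \eqref{assum33} is used to obtain $V''(0)>x_0$ (cf.\ \eqref{Vmxero}); combined with \eqref{assump22} (which in your situation is also an equality, giving $(z'(0))^2=x_0/2$) this is what produces $z''(0)>0$ and launches the bootstrap. At the double boundary one has $V''(0)=x_0$ and $z''(0)=0$, and the argument stalls: there is no margin left. Your perturbation option is not justified either, since scattering is not known to be closed under $H^1$ limits and you give no uniform-in-$\eta$ control on the scattering norm; the phrase ``$H^1$-stability of the scattering/blow-up dichotomy'' names a theorem that is not available here.

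The paper resolves the degeneracy dynamically rather than perturbatively: it evolves to a small positive time $t_0$ and applies Theorem~\ref{ScatteaboveQ} to the data $\psi^\gamma(t_0)$, after checking that by then the hypotheses have become strict. Two derivative computations are needed. First, using the equation at $x=0$ one finds $\partial_t N(\psi^\gamma(t))\big|_{t=0}=-2\gamma(p+1)N(Q)<0$, so \eqref{assum33} opens up immediately for $t>0$. Second, one must verify that \eqref{assump22} is not destroyed in the process: setting
\[
H(t)=[M(\psi^\gamma)]^{\sigma_c}\Big(E(\psi^\gamma)-\tfrac18(z'(t))^2\Big)-[M(Q)]^{\sigma_c}E(Q),
\]
your own identities give $H(0)=0$; from $z''(0)=0$ one gets $H'(0)=0$; and from $V'''(0)=-\tfrac{4(p-3)}{p+1}\,\partial_tN\big|_{t=0}>0$ one deduces $H''(0)<0$, so $H(t_0)<0$ and \eqref{assump22} holds (strictly) at $t_0$. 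Conditions \eqref{assum11} and \eqref{assump44} persist trivially. The blow-up half and the case $\gamma<0$ are handled analogously via $\overline{\psi^\gamma(-t)}$. This ``wait a short time'' device is the missing ingredient in your argument.
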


Notice that  Corollary \ref{GroundPhase} shows that the results stated in Theorems \ref{ScatteaboveQ} and \ref{BlowupboveQ} are not symmetric in time. Finally, we have the following result.

\begin{corollary}\label{LongGeral}
Let $\mu\in \R\setminus\left\{0\right\}$, $u_{0}\in H^{1}(\R)$  with finite variance and let $u_{\mu}(t)$ be
the corresponding solution of \eqref{NLS} with initial data 
\[
u_{\mu, 0}(x)=e^{i\mu x^{2}}u_{0}(x).
\]
Assume that
\begin{equation}\label{LongCondi11}
	E(u_{0})[M(u_{0})]^{\sigma_{c}}\leq E(Q)[M(Q)]^{\sigma_{c}}.
\end{equation}
(i) If $[M(u_{0})]^{\sigma_{c}}N(u_{0})<[M(Q)]^{\sigma_{c}}N(Q)$, then for any $\mu>0$  the solution $u_{\mu}(t)$
scatters in $H^{1}(\R)$ forward in the time.\\
(ii) If $[M(u_{0})]^{\sigma_{c}}N(u_{0})>[M(Q)]^{\sigma_{c}}N(Q)$, then for any $\mu<0$  the solution $u_{\mu}(t)$
blows up in finite positive time.
\end{corollary}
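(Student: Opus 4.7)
My plan is to apply Theorems \ref{ScattebelowQ}, \ref{ScatteaboveQ}, \ref{BlowupboveQ} and Remark \ref{Blow-up} directly to the modulated initial data $u_{\mu,0}(x)=e^{i\mu x^{2}}u_{0}(x)$, using the fact that multiplication by the quadratic phase preserves $|u_{\mu,0}|=|u_{0}|$ pointwise, so that $M(u_{\mu,0})=M(u_{0})$ and $N(u_{\mu,0})=N(u_{0})$ are unchanged, while $K(u_{\mu,0})$, $E(u_{\mu,0})$ and the virial quantities shift in an explicit and controllable way.

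The first step is a direct computation. Expanding $\partial_{x}u_{\mu,0}=e^{i\mu x^{2}}(2i\mu x u_{0}+\partial_{x}u_{0})$ and integrating yields $V_{u_{\mu}}(0)=V(0)$,
\[
K(u_{\mu,0})=K(u_{0})+4\mu^{2}V(0)+\mu V'(0),\qquad V'_{u_{\mu}}(0)=V'(0)+8\mu V(0),
\]
and $E(u_{\mu,0})=E(u_{0})+2\mu^{2}V(0)+\tfrac{\mu}{2}V'(0)$, where $V(0)$ and $V'(0)$ refer to $u_{0}$. A short algebraic manipulation then gives the key invariance identity
\[
E(u_{\mu,0})-\frac{(V'_{u_{\mu}}(0))^{2}}{32\,V_{u_{\mu}}(0)}=E(u_{0})-\frac{(V'(0))^{2}}{32\,V(0)},
\]
which, combined with \eqref{LongCondi11}, shows that \eqref{assump22} holds for $u_{\mu,0}$ for every $\mu\in\R$.

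For part (i), fix $\mu>0$ and split on whether $E(u_{\mu,0})[M(u_{0})]^{\sigma_{c}}$ lies strictly below or at/above $E(Q)[M(Q)]^{\sigma_{c}}$. In the first case, the identity $E=K/2-N/(p+1)$ together with the preserved condition $N(u_{\mu,0})[M(u_{0})]^{\sigma_{c}}<N(Q)[M(Q)]^{\sigma_{c}}$ immediately forces $K(u_{\mu,0})[M(u_{0})]^{\sigma_{c}}<K(Q)[M(Q)]^{\sigma_{c}}$, so \eqref{KinectMass} holds and Theorem \ref{ScattebelowQ} applies. In the second case, the four hypotheses of Theorem \ref{ScatteaboveQ} are verified as follows: \eqref{assum11} and \eqref{assum33} are direct, \eqref{assump22} is the invariance identity, and \eqref{assump44} follows because the case assumption forces $E(u_{\mu,0})-E(u_{0})=\tfrac{\mu}{2}(4\mu V(0)+V'(0))\geq 0$, so that $4\mu V(0)+V'(0)\geq 0$ and therefore $V'_{u_{\mu}}(0)=V'(0)+8\mu V(0)\geq 4\mu V(0)>0$. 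Part (ii) is strictly parallel with $\mu<0$: below the energy threshold one uses the sharp Gagliardo--Nirenberg inequality \eqref{Ga-Ni-Inequ} to promote $N(u_{\mu,0})[M(u_{0})]^{\sigma_{c}}>N(Q)[M(Q)]^{\sigma_{c}}$ to $K(u_{\mu,0})[M(u_{0})]^{\sigma_{c}}>K(Q)[M(Q)]^{\sigma_{c}}$, which combined with the below-threshold energy allows application of Remark \ref{Blow-up}; at/above the energy threshold one invokes Theorem \ref{BlowupboveQ}, the same sign computation now yielding $4\mu V(0)+V'(0)\leq 0$ and hence $V'_{u_{\mu}}(0)\leq 4\mu V(0)<0$.

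The delicate step is precisely this last sign check: one really needs the coefficient $8$ appearing in $V'_{u_{\mu}}(0)$ to dominate the coefficient $4$ controlled by the energy inequality, and this works only because $\mu V(0)>0$ (respectively $<0$) pushes $V'_{u_{\mu}}(0)$ strictly past zero in the required direction for each sign of $\mu$. Everything else in the argument is bookkeeping around the single invariance identity, which is itself the reason why \eqref{assump22} is the natural hypothesis appearing in Theorems \ref{ScatteaboveQ} and \ref{BlowupboveQ}.
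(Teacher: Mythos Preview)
Your proof is correct and follows essentially the same approach as the paper: both split on whether $E(u_{\mu,0})[M(u_0)]^{\sigma_c}$ lies below or at/above $E(Q)[M(Q)]^{\sigma_c}$, both use the invariance identity for $E-\tfrac{(V')^{2}}{32V}$ under quadratic-phase modulation to obtain \eqref{assump22}, and both reduce to Theorems~\ref{ScattebelowQ}, \ref{ScatteaboveQ}, \ref{BlowupboveQ} and Remark~\ref{Blow-up}. Your verification of the virial sign condition \eqref{assump44}/\eqref{Blowup33} via the single inequality $E(u_{\mu,0})-E(u_0)=\tfrac{\mu}{2}(4\mu V(0)+V'(0))\geq 0$ is slightly more direct than the paper's quadratic-polynomial argument, but the underlying idea is identical.
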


We observe that under the conditions of Corollary \ref{LongGeral},
\[
E(u_{\mu, 0})=2\mu^{2}\|x u_{0}\|^{2}_{L^{2}}+2\mu \IM\int_{\R}x \partial_{x}u_{0}\overline{u_{0}}+E(u_{0}).
\]
Thus, $E(u_{\mu, 0})\to\infty$ as $\mu\to \infty$.

This paper is structured as follows. We fix notations at the end of Section \ref{sec:intro}. In Section \ref{S:preli},
we give some results that are necessary for later sections, including the smoothing properties of the integral equation.
In Section \ref{Criterion}, we prove the scattering criterion by a concentration-compactness-rigidity argument.
Then in Section \ref{S:2}, using the scattering criterion, we prove the Theorems \ref{ScattebelowQ} and \ref{ScatteaboveQ}. 
In Section \ref{DynaThre}, we establish the long time dynamics at mass and energy ground states threshold (Theorem \ref{DynaThre}).
Finally, in Section \ref{Blow-up-Corollary} we show the blow-up result of Theorem \ref{BlowupboveQ}. Moreover,
we prove the Corollaries \ref{GroundPhase} and \ref{LongGeral}.

\subsection*{Notations}
We write $A\lesssim B$ or $B\gtrsim A$  to signify $A\leq CB$ for some constant $C>0$. When  $A \lesssim B \lesssim A$ we write $A\sim B$.
For an interval $I\subset \R$, we use $L^{r}(I)$ to denote the Banach space of functions $f: I:\to \C$ such that the norm
\[
\|f\|_{L^{r}(I)}=\( \int_{\R}|f(x)|^{r}dx\)^{\frac{1}{r}},
\]
is finite, with the usual adjustments when $r=\infty$. 

For $s\in \R$ we introduce the Sobolev space
\[
H^{s}(\R)=\left\{f\in \Sch^{\prime}(\R), \|f\|_{H^{s}(\R)}
:=\|(1+|\xi|^{2})^{\frac{s}{2}}\hat{f}(\xi)\|_{L^{2}(\R)}\right\}
\]
and the homogeneous Sobolev space
\[
\dot{H}^{s}(\R)=\left\{f\in \Sch^{\prime}(\R), \|f\|_{\dot{H}^{s}(\R)}
:=\||\xi|^{s}\hat{f}(\xi)\|_{L^{2}(\R)}\right\},
\]
where $\hat{f}$ denote the Fourier transform. As usual, we denote $\R^{+}=(0, +\infty)$. Finally, we define the following indices
\[
a=2(p-1) \quad \text{and}\quad 
 b=\frac{2(p-1)}{p}.
\]

\section{Preliminaries}\label{S:preli}

In this section, we give preliminary results that will be used later. A function $u:\R \times I\to \C$ on a nonempty interval $[0, T_{+})$ is called a solution to the Cauchy problem \eqref{NLS}
if $u\in C_{t}H^{1}( \R\times K)\cap C_{t}H^{^{\frac{3}{4}}}((0, T_{+})\times\R)$ for every compact interval $K\subset (0, T_{+})$
and it satisfies the Duhamel formula
\begin{equation}\label{DuhamelF}
\begin{split}
u(x,t)&=e^{i\partial^{2}_{x}}u_{0}+i\int^{t}_{0}e^{i(t-s)\partial^{2}_{x}}\delta(x)|u(x,s)|^{p-1}u(x,s)ds\\
&=e^{i\partial^{2}_{x}}u_{0}+i\int^{t}_{0}\frac{e^{\frac{ix^{2}}{4(t-s)}}}{\sqrt{4\pi i (t-s)}}|u(0,s)|^{p-1}u(0,s)ds,
\end{split}
\end{equation}
for all $t\in [0, T_{+})$. 
Consider $f\in \dot{H}^{\gamma}$ with $\gamma\in \R$. For $t$, $s\in \R$ with $t\geq s$
we define the transformation 
\[
[\Pch_{s}f](x,t):=\int^{t}_{s}\frac{e^{\frac{ix^{2}}{4(t-\tau)}}}{\sqrt{4\pi i (t-\tau)}} f(\tau) d\tau.
\]
Moreover, for $t\in \R$ we define
\[
[\Sch f](x,t):=\int^{\infty}_{t}\frac{e^{\frac{ix^{2}}{4(t-\tau)}}}{\sqrt{4\pi i (t-\tau)}}f(\tau)  d\tau.
\]

To prove the scattering result, we we will need the following global-in-time estimates 
(see \cite[Proposition 2.1]{AdamiFukuHolmer} and \cite[Lemma 1]{ADAMI2001148}).

\begin{proposition}\label{Estimates-S}
Let $t$, $s$ and  $\gamma\in \R$. We have\\
(i) If $f\in \dot{H}^{\gamma}(\R)$, then
\[
\|[e^{i(t-s)\partial^{2}_{x}}f](0)\|_{\dot{H}^{\frac{2\gamma +1}{4}}_{t}}\lesssim \|f\|_{\dot{H}^{\gamma}}.
\]
(ii) If $f\in \dot{H}^{\frac{2\gamma-1}{4}}$ and $-\frac{1}{2}<\frac{2\gamma-1}{4}<\frac{1}{2}$, then
\[\begin{split}
\|[\Pch_{s}f](0, \cdot)\|_{\dot{H}^{\frac{2\gamma +1}{4}}_{t}}&\lesssim\|f\|_{\dot{H}^{\frac{2\gamma-1}{4}}},\\
\|[\Sch f](0, \cdot)\|_{\dot{H}^{\frac{2\gamma +1}{4}}_{t}}&\lesssim\|f\|_{\dot{H}^{\frac{2\gamma-1}{4}}}.
\end{split}\]
(iii) If $f\in \dot{H}^{\frac{2\gamma-1}{4}}$ and $-\frac{1}{2}<\frac{2\gamma-1}{4}<\frac{1}{2}$, then
\[\begin{split}
\|\Pch_{s}f\|_{L_{r}^{\infty}({\R})\dot{H}^{\gamma}_{x}}&\lesssim\|f\|_{\dot{H}^{\frac{2\gamma-1}{4}}},\\
\|\Sch f\|_{L_{r}^{\infty}({\R})\dot{H}^{\gamma}_{x}}&\lesssim\|f\|_{\dot{H}^{\frac{2\gamma-1}{4}}}.
\end{split}\]
\end{proposition}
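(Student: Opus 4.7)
The plan is to view all three estimates as trace/smoothing bounds for the 1D free Schr\"odinger evolution which, via Plancherel in time, reduce to Fourier multiplier bounds. For part (i), I would start from the explicit representation
\[
[e^{i(t-s)\partial_x^2}f](0) = \frac{1}{2\pi}\int_\R e^{-i(t-s)\xi^2}\hat f(\xi)\,d\xi,
\]
split the integral over $\{\xi>0\}$ and $\{\xi<0\}$, and substitute $\eta=\xi^2$ on each half. This presents the trace at $x=0$, up to constants and a harmless phase $e^{is\eta}$, as the inverse $t$-Fourier transform of $\hat f(\pm\sqrt\eta)/\sqrt\eta$. Plancherel in $t$ combined with the inverse change of variables then yields
\[
\|[e^{i(t-s)\partial_x^2}f](0)\|_{\dot H^{(2\gamma+1)/4}_t}^{2} \;\sim\; \int_\R |\xi|^{2\gamma}|\hat f(\xi)|^2\,d\xi,
\]
exactly the desired bound.

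For part (ii), evaluating the kernel at $x=0$ reduces $\Pch_s f(0,\cdot)$ and $\Sch f(0,\cdot)$ to convolutions of $f$ against the one-sided time kernels $\chi_{\sigma>0}(4\pi i\sigma)^{-1/2}$ and $\chi_{\sigma<0}(-4\pi i\sigma)^{-1/2}$, respectively. Up to constants and a phase from $\sqrt i$, these are one-sided Riesz potentials of order $1/2$ in $t$, whose Fourier symbols have modulus comparable to $|\xi|^{-1/2}$. By Plancherel such operators map $\dot H^{(2\gamma-1)/4}_t \to \dot H^{(2\gamma+1)/4}_t$. The hypothesis $-\tfrac12<\tfrac{2\gamma-1}{4}<\tfrac12$ is precisely the range in which the one-sided fractional integrals are well-defined and bounded between these homogeneous Sobolev spaces; outside the range, cancellation and growth issues obstruct the estimate.

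For part (iii), I would run a $TT^*$-type duality argument built on (i). For fixed $t\in\R$ and $g\in\dot H^{-\gamma}$, the self-adjointness of $\delta$ and unitarity of $e^{i\sigma\partial_x^2}$ give
\[
\langle [\Pch_s f](\cdot,t), g\rangle_{L^2_x} = \int_s^t f(\tau)\,\overline{[e^{-i(t-\tau)\partial_x^2}g](0)}\,d\tau.
\]
The duality pairing of $\dot H^{(2\gamma-1)/4}_\tau$ with its dual $\dot H^{(1-2\gamma)/4}_\tau$ bounds this by $\|f\|_{\dot H^{(2\gamma-1)/4}_\tau}\,\|[e^{-i(t-\tau)\partial_x^2}g](0)\|_{\dot H^{(1-2\gamma)/4}_\tau}$, and invoking (i) with exponent $-\gamma$ controls the last factor by $\|g\|_{\dot H^{-\gamma}}$. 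Taking the supremum over unit-norm $g$ yields the $\dot H^\gamma_x$ bound pointwise in $t$, hence uniformly in $t$; the argument for $\Sch$ is identical after a time reversal.

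The main technical obstacle is part (ii): one must carefully make sense of the one-sided Riesz potential on all of $\R$, pin down its Fourier symbol including the phase from $\sqrt i$ and from the orientation of the support, and verify the sharp admissible range on homogeneous Sobolev spaces. Once (i) and (ii) are secured, (iii) is essentially mechanical, since duality reduces it to (i) applied to the dual evolution with the dual exponent.
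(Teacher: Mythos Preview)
The paper does not give its own proof of this proposition; it simply cites \cite[Proposition 2.1]{AdamiFukuHolmer} and \cite[Lemma 1]{ADAMI2001148}. Your outline is a correct and essentially complete sketch of the standard argument behind those references: part (i) is the one-dimensional Kato smoothing estimate, proved exactly via the substitution $\eta=\xi^{2}$ and Plancherel; part (ii) is the $\dot H^{s}$-boundedness of the half-line Riesz potential of order $1/2$; and part (iii) follows by duality from (i). The one point worth making fully explicit is that in both (ii) and (iii) a sharp time-truncation appears (since $\Pch_{s}$ integrates only over $[s,t]$, not all of $\R$), and boundedness of such a truncation on $\dot H^{\alpha}_{t}$ requires $|\alpha|<\tfrac12$; this is precisely where the hypothesis $-\tfrac12<\tfrac{2\gamma-1}{4}<\tfrac12$ enters, as you already indicate.
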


\begin{remark}\label{BoundedGamma}
We recall that $\gamma_{c}=\frac{1}{2}-\frac{1}{p-1}$. Notice that if $p>3$, then
\[
\frac{1}{4}<\frac{2\gamma_{c}+1}{4}<\frac{1}{2} \quad \text{and}\quad 
 -\frac{1}{4}<\frac{2\gamma_{c}-1}{4}<0.
\]
Furthermore, by the Sobolev embedding $\dot{H}_{x}^{s}(\R)\hookrightarrow L^{r_{1}}_{x}(\R)$ with $\frac{1}{r_{1}}=\frac{1}{2}-s$, we get
\[
\|f\|_{L_{x}^{a}(\R)}\lesssim \|f\|_{\dot{H}_{x}^{\frac{2\gamma_{c}+1}{4}}}\quad \text{for} \quad a=2(p-1).
\]
And since $ L^{r^{\prime}_{1}}_{x}(\R) \hookrightarrow \dot{H}_{x}^{-s}(\R)$, it follows that
\[
\|f\|_{\dot{H}_{x}^{\frac{2\gamma_{c}-1}{4}}}\lesssim\|f\|_{L_{x}^{b}(\R)} \quad \text{for}\quad b=\frac{2(p-1)}{p}.
\]
\end{remark}

\subsection*{Variational Analysis}
We recall the following Gagliardo-Nirenberg inequality established by Holmer and Liu \cite[Proposition 1.3]{HOLMER2020123522},
\begin{equation}\label{Ga-Ni-Inequ}
N(u)\leq [K(u)]^{\frac{p+1}{4}} [M(u)]^{\frac{p+1}{4}} \quad \text{for all $u\in H^{1}(\R)$}.
\end{equation}
The ground state $Q$ optimizes the inequality \eqref{Ga-Ni-Inequ}, i.e.,
\[
N(Q)=[K(Q)]^{\frac{p+1}{4}} [M(Q)]^{\frac{p+1}{4}},
\]
where $Q$ is defined in \eqref{Groundsta}. Note also that $Q$ satisfies the Pohozaev identities:
\begin{equation}\label{PohoIdenti}
M(Q)=K(Q)=\frac{1}{2}N(Q).
\end{equation}
Thus,
\begin{equation}\label{EnergyIdentities}
E(Q)=\frac{(p-3)}{4(p+1)}N(Q)=\frac{(p-3)}{2(p+1)}K(Q).
\end{equation}
Moreover, 
\begin{equation}\label{OptConst}
2^{\sigma_{c}}=(K(Q)[M(Q)]^{\sigma_{c}})^{\frac{p+1}{4}}=\(\frac{1}{2}N(Q)[M(Q)]^{\sigma_{c}}\)^{\frac{p+1}{4}}.
\end{equation}

We also have the following coercivity property.

\begin{lemma}\label{Coercivity}
Let $p>3$ and $f\in H^{1}(\R)$ satisfy
\[
N(f)[M(f)]^{\sigma_{c}}\leq L<N(Q)[M(Q)]^{\sigma_{c}}
\]
for some positive constant $L$. Then there exists a positive constant $\eta=\eta(L, Q)$ such that
\begin{align}\label{IneG}
8K(f)-4N(f)&\geq \eta K(f),\\\label{IneEnergy}
E(f)&\geq \frac{\eta}{16} K(f).
\end{align}
\end{lemma}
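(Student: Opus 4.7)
The plan is to use the sharp Gagliardo-Nirenberg inequality \eqref{Ga-Ni-Inequ}---whose optimizer is $Q$---together with the Pohozaev identity $N(Q)=2K(Q)$ from \eqref{PohoIdenti} to translate the strict inequality $N(f)[M(f)]^{\sigma_{c}}\le L<N(Q)[M(Q)]^{\sigma_{c}}$ into a pointwise bound of the form $N(f)\le (2-\delta)K(f)$ with $\delta>0$ depending only on $L$ and $Q$. Both \eqref{IneG} and \eqref{IneEnergy} will then follow by elementary algebra.

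For \eqref{IneG}, I first note that $\sigma_{c}=(p+1)/(p-3)$. Discarding the trivial cases $K(f)=0$ (which forces $f\equiv 0$) and $N(f)=0$, I raise \eqref{Ga-Ni-Inequ} to the power $4/(p+1)$ and divide by $K(f)$ to obtain
\[
\frac{N(f)}{K(f)}\le N(f)^{(p-3)/(p+1)}M(f)=\bigl(N(f)[M(f)]^{\sigma_{c}}\bigr)^{(p-3)/(p+1)}.
\]
Applying the same computation to the optimizer $Q$ and using $N(Q)/K(Q)=2$ gives $\bigl(N(Q)[M(Q)]^{\sigma_{c}}\bigr)^{(p-3)/(p+1)}=2$. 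Setting $\theta:=L/(N(Q)[M(Q)]^{\sigma_{c}})<1$, the hypothesis then yields $N(f)/K(f)\le 2\theta^{(p-3)/(p+1)}<2$. Consequently, $8K(f)-4N(f)\ge \eta K(f)$ holds with $\eta:=8\bigl(1-\theta^{(p-3)/(p+1)}\bigr)>0$; note also that $\eta<8$, a fact that will be used in the next step.

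For \eqref{IneEnergy}, I decompose the energy as a positive linear combination of the Pohozaev-type quantity $G(f)=8K(f)-4N(f)$ and $K(f)$. Solving for the coefficients one finds
\[
E(f)=\frac{1}{4(p+1)}\bigl(8K(f)-4N(f)\bigr)+\frac{p-3}{2(p+1)}K(f),
\]
both coefficients being positive since $p>3$. Applying \eqref{IneG} and collecting terms gives $E(f)\ge \frac{2(p-3)+\eta}{4(p+1)}K(f)$, and the elementary inequality $\frac{2(p-3)+\eta}{4(p+1)}\ge \frac{\eta}{16}$ reduces after clearing denominators to $\eta\le 8$, which holds by the previous step. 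The only real point of the argument is recognising that the sharp threshold $2$ in the Gagliardo-Nirenberg ratio equals $N(Q)/K(Q)$ via Pohozaev, and that the correct exponent $(p-3)/(p+1)=1/\sigma_{c}$ is the one that links the hypothesis to this threshold; the rest is bookkeeping.
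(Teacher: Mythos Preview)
Your proof is correct and follows essentially the same route as the paper: both rewrite the Gagliardo--Nirenberg inequality as $N(f)\le 2K(f)\bigl(N(f)[M(f)]^{\sigma_c}/N(Q)[M(Q)]^{\sigma_c}\bigr)^{1/\sigma_c}$ and obtain the identical constant $\eta=8(1-\theta^{1/\sigma_c})$ (with your $\theta$ equal to the paper's $1-r$). For \eqref{IneEnergy} the paper uses the slightly simpler decomposition $E(f)=\tfrac12\bigl(K(f)-\tfrac12 N(f)\bigr)+\tfrac{p-3}{4(p+1)}N(f)$ and drops the nonnegative last term, avoiding the auxiliary check $\eta\le 8$, but your variant is equally valid.
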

\begin{proof}
We can write
\begin{equation}\label{Lident}
L=(1-r)N(Q)[M(Q)]^{\sigma_{c}}
\end{equation}
for some $r>0$ small. Notice that by Gagliardo-Nirenberg inequality \eqref{Ga-Ni-Inequ}
\[
[N(f)]^{\frac{(p+1)}{4}}\leq [K(f)]^{\frac{(p+1)}{4}}(N(f)[M(f)]^{\sigma_{c}})^{\frac{(p-3)}{4}}.
\]
Moreover, using the identities \eqref{OptConst} we have
\[
N(Q)[M(Q)]^{\sigma_{c}}=2^{\sigma_{c}}.
\]
Combining equations above and \eqref{Lident} we get
\[\begin{split}
[N(f)]^{\frac{(p+1)}{4}}\leq 2^{\frac{(p+1)}{4}}[K(f)]^{\frac{(p+1)}{4}}
\( \frac{N(f)[M(f)]^{\sigma_{c}}}{N(Q)[M(Q)]^{\sigma_{c}}}\)^{\frac{(p-3)}{4}}\\
\leq 2^{\frac{(p+1)}{4}} [K(f)]^{\frac{(p+1)}{4}} (1-r)^{\frac{(p-3)}{4}}.
\end{split}\]
Then,
\[
8K(f)-4N(f)\geq 8K(f)-8K(f)(1-r)^{\frac{1}{\sigma_{c}}}=\eta K(f),
\]
where $\eta=8(1-(1-r)^{\frac{1}{\sigma_{c}}})>0$. This proves the estimate \eqref{IneG}.
Finally, since $p> 3$, from \eqref{IneG} we obtain
\[
E(f)=\frac{1}{2}\(K(f)-\frac{1}{2}N(f)\)+\frac{(p-3)}{4(p+1)}N(f)\\
\geq \frac{1}{2}\(K(f)-\frac{1}{2}N(f)\)\geq \frac{\eta}{16}K(f).
\]
This completes the proof of lemma.
\end{proof}

\section{Scattering criterion}\label{Criterion}

In this section, we show the Theorem \ref{Th1}.

\subsection*{Cauchy problem}

In the following result we have a sufficiently condition for scattering. See \cite[Proposition 2.4]{AdamiFukuHolmer} for the proof.
\begin{proposition}\label{Condition-scatte}
Let $p\geq3$, $u_{0}\in H^{1}(\R)$ and $u(t)$ be the corresponding solution of Cauchy problem \eqref{NLS}.
If $u$ is forward global, uniformly bounded in $H_{x}^{1}(\R)$ and $\|u(0, \cdot)\|_{L^{a}_{t}(\R^{+})}< \infty$,
then $u(t)$ scatters forward in time.
\end{proposition}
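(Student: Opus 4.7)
The plan is to produce the scattering state from the Duhamel formula \eqref{DuhamelF}. Setting $F(s):=|u(0,s)|^{p-1}u(0,s)$, I would define the candidate
\[
\psi^{+}:=u_{0}+i\int_{0}^{\infty}e^{-is\partial_{x}^{2}}\bigl[\delta(x)\,F(s)\bigr]\,ds.
\]
A direct Duhamel manipulation yields the identity $u(t)-e^{it\partial_{x}^{2}}\psi^{+}=-i\,[\Sch F](\cdot,t)$, so that both the membership $\psi^{+}\in H^{1}(\R)$ and the convergence $\|u(t)-e^{it\partial_{x}^{2}}\psi^{+}\|_{H^{1}(\R)}\to 0$ are reduced to showing that $\|\Sch(F\mathbf{1}_{[T,\infty)})\|_{L^{\infty}_{t}H^{1}_{x}}$ tends to $0$ as $T\to\infty$.

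The crucial quantitative input is the algebraic identity $bp=a$, which gives
\[
\|F\|_{L^{b}_{t}([T,\infty))}=\|u(0,\cdot)\|_{L^{a}_{t}([T,\infty))}^{\,p}\longrightarrow 0\quad\text{as }T\to\infty.
\]
Combined with the embedding $L^{b}_{t}\hookrightarrow \dot{H}^{(2\gamma_{c}-1)/4}_{t}$ recorded in Remark \ref{BoundedGamma} and Proposition \ref{Estimates-S}(iii) at $\gamma=\gamma_{c}$, this immediately delivers $\|\Sch(F\mathbf{1}_{[T,\infty)})\|_{L^{\infty}_{t}\dot{H}^{\gamma_{c}}_{x}}\to 0$, i.e., scattering at the critical Sobolev level.

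To upgrade to full $H^{1}$-scattering, I would partition $\R^{+}$ into finitely many subintervals $I_{j}=[T_{j},T_{j+1}]$ on which $\|u(0,\cdot)\|_{L^{a}(I_{j})}\leq \eta$ for a small $\eta$, and close a second bootstrap at the endpoints $\gamma=0$ and $\gamma=1$ of Proposition \ref{Estimates-S}. The $\gamma=1$ piece rests on a fractional chain rule in time of the form
\[
\|F\|_{\dot{H}^{1/4}_{t}(I_{j})}\lesssim \|u(0,\cdot)\|_{L^{\infty}_{t}}^{p-1}\,\|u(0,\cdot)\|_{\dot{H}^{1/4}_{t}(I_{j})},
\]
where the uniform $L^{\infty}_{t}$-bound on the trace $u(0,\cdot)$ follows from the standing $H^{1}_{x}$-hypothesis via the Sobolev embedding $H^{1}(\R)\hookrightarrow C_{b}(\R)$. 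The $\gamma=0$ estimate, needed for the $L^{2}_{x}$ component, comes by duality from the linear smoothing bound $\|[e^{-is\partial_{x}^{2}}\phi](0)\|_{\dot{H}^{1/4}_{t}}\lesssim \|\phi\|_{L^{2}}$ of Proposition \ref{Estimates-S}(i).

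The principal obstacle I expect is closing the $\gamma=1$ bootstrap: one must propagate the $\dot{H}^{3/4}_{t}$-regularity of $u(0,\cdot)$ across the intervals $I_{j}$ while absorbing the fractional time derivative of the point nonlinearity into the contraction, which forces $\eta$ to remain uniformly small. Once this is in hand, summing the tail estimates over the finitely many intervals yields $\|u(t)-e^{it\partial_{x}^{2}}\psi^{+}\|_{H^{1}(\R)}\to 0$ and completes the proof.
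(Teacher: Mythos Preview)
The paper does not give its own proof of this proposition; it simply cites \cite[Proposition 2.4]{AdamiFukuHolmer}. Your outline is the standard argument for such a scattering statement and is precisely the route that the machinery assembled here (Proposition~\ref{Estimates-S} and Remark~\ref{BoundedGamma}) is designed to support: define $\psi^{+}$ from the full Duhamel integral, reduce to tail control of $\Sch F$ in $H^{1}_{x}$, use $bp=a$ together with $L^{b}_{t}\hookrightarrow\dot H^{(2\gamma_{c}-1)/4}_{t}$ and Proposition~\ref{Estimates-S}(iii) to get decay at the critical level, and then run the $\gamma=0$ and $\gamma=1$ endpoints via a fractional chain rule in $t$ combined with an interval-subdivision bootstrap. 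That is exactly the scheme of the referenced proof, so your proposal is correct and aligned with what the paper invokes.
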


We recall a small data global existence result for the equation \ref{NLS}.

\begin{proposition}\label{smalldataexistence}
Let $p\geq3$. There exist $0<\delta_{sd}\leq1$ and a positive constant $C_{sd}$ such that if
$\|u_{0}\|_{\dot{H}^{\gamma_{c}}_{x}(\R)}\leq \delta_{sd}$, then the solution of \eqref{NLS} with initial data $u_{0}$ 
is global in $\dot{H}^{\gamma_{c}}(\R)$  and
\[
\|u(0, \cdot)\|_{L^{a}_{t}(\R^{+})}+\|u\|_{L^{\infty}_{t}(\R^{+}, {H}^{1}(\R))}\leq C_{sd}\|u_{0}\|_{{H}^{1}_{x}(\R)}.
\]
\end{proposition}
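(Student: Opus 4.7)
The plan is to prove Proposition \ref{smalldataexistence} by a Banach fixed-point argument applied to the Duhamel formula \eqref{DuhamelF} in a complete metric space that simultaneously captures the critical trace norm $\|u(0,\cdot)\|_{L^{a}_{t}(\R^{+})}$ and the ambient $L^{\infty}_{t}H^{1}_{x}$ norm. Concretely, I would set up the contraction on
\[
Y=\Big\{u:\ \|u(0,\cdot)\|_{L^{a}_{t}(\R^{+})}\leq\eta,\ \|u\|_{L^{\infty}_{t}(\R^{+},H^{1}_{x}(\R))}\leq A\Big\},
\]
with $\eta\sim\delta_{sd}$ and $A\sim\|u_{0}\|_{H^{1}}$ to be fixed later, and contract the nonlinear map
\[
\Phi(u)(x,t)=e^{it\partial_{x}^{2}}u_{0}+i[\Pch_{0}(|u(0,\cdot)|^{p-1}u(0,\cdot))](x,t).
\]

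For the critical $L^{a}_{t}$ piece I would combine Proposition \ref{Estimates-S}(i) at $\gamma=\gamma_{c}$ with the Sobolev embedding of Remark \ref{BoundedGamma} to obtain $\|[e^{it\partial_{x}^{2}}u_{0}](0,\cdot)\|_{L^{a}_{t}}\lesssim\|u_{0}\|_{\dot{H}^{\gamma_{c}}}\leq\delta_{sd}$, and Proposition \ref{Estimates-S}(ii) together with the same embedding to get $\|[\Pch_{0}F](0,\cdot)\|_{L^{a}_{t}}\lesssim\|F\|_{L^{b}_{t}}$ for $F=|u(0,\cdot)|^{p-1}u(0,\cdot)$; the arithmetic identity $pb=a$ then yields $\|F\|_{L^{b}_{t}}=\|u(0,\cdot)\|_{L^{a}_{t}}^{p}\leq\eta^{p}$. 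For the $L^{\infty}_{t}H^{1}_{x}$ piece, the $L^{2}_{x}$ part comes directly from unitarity of $e^{it\partial_{x}^{2}}$ combined with Proposition \ref{Estimates-S}(iii) at $\gamma=0$, while for the $\dot{H}^{1}_{x}$ part I would invoke Proposition \ref{Estimates-S}(iii) at $\gamma=1$ (so that $(2\gamma-1)/4=1/4\in(-\tfrac12,\tfrac12)$) to obtain $\|\Pch_{0}F\|_{L^{\infty}_{t}\dot{H}^{1}_{x}}\lesssim\|F\|_{\dot{H}^{1/4}_{t}}$; a one-dimensional fractional Leibniz rule in time then bounds $\|F\|_{\dot{H}^{1/4}_{t}}$ by $\|u(0,\cdot)\|_{L^{a}_{t}}^{p-1}$ times a subcritical Sobolev-in-time norm of $u(0,\cdot)$, which in turn is controlled via Proposition \ref{Estimates-S}(i)--(ii) with $\gamma=1$ by a constant times $\|u_{0}\|_{H^{1}}+\eta^{p-1}A$.

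Stacking these estimates gives $\|\Phi(u)(0,\cdot)\|_{L^{a}_{t}}\leq C(\delta_{sd}+\eta^{p})$ and $\|\Phi(u)\|_{L^{\infty}_{t}H^{1}_{x}}\leq C(\|u_{0}\|_{H^{1}}+\eta^{p-1}A)$, so that choosing $\delta_{sd}$ (and hence $\eta$) small enough and $A=2C\|u_{0}\|_{H^{1}}$ forces $\Phi(Y)\subset Y$. The corresponding Lipschitz estimate, based on the pointwise inequality $\bigl||u|^{p-1}u-|v|^{p-1}v\bigr|\leq C(|u|^{p-1}+|v|^{p-1})|u-v|$, follows by exactly the same scheme and closes the contraction; Banach's theorem then produces a unique global solution in $Y$, which immediately yields the bound asserted in the statement with $C_{sd}$ depending only on the implicit constants and on $\delta_{sd}$. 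I expect the genuine obstacle to be the $\dot{H}^{1}_{x}$ piece: because the nonlinearity lives at a single spatial point, an $\dot{H}^{1}_{x}$-level bound for the Duhamel term is only accessible through Proposition \ref{Estimates-S}(iii) at $\gamma=1$, which forces one to estimate the trace nonlinearity in $\dot{H}^{1/4}_{t}$ and thus to apply a genuine 1D fractional Leibniz rule; the critical-norm contraction, the difference bound, and the $L^{2}_{x}$ part of the $H^{1}$ control are then routine consequences of Proposition \ref{Estimates-S}, Remark \ref{BoundedGamma}, and the arithmetic of the exponents $a$, $b$, $\gamma_{c}$.
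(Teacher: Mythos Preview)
The paper does not supply its own proof here; it simply cites \cite[Propositions~2.3 and~2.4]{AdamiFukuHolmer}, and your contraction-mapping scheme built on Proposition~\ref{Estimates-S} together with the embeddings of Remark~\ref{BoundedGamma} is exactly the argument carried out in that reference, so the approach is correct and essentially the same.

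One small point of care on the step you flag as the genuine obstacle: the fractional chain rule you write for the $\dot H^{1}_{x}$ piece, namely $\||g|^{p-1}g\|_{\dot H^{1/4}_{t}}\lesssim\|g\|_{L^{a}_{t}}^{p-1}\|g\|_{X}$ with $X$ an $L^{2}$-based Sobolev norm, forces the second Lebesgue exponent in Kato--Ponce to be $\infty$ (since $(p-1)/a=1/2$), which is the delicate endpoint. The standard fix---and what the cited reference effectively does---is to take a large but finite exponent instead, obtaining a power $\|g\|_{L^{a}_{t}}^{(p-1)(1-\varepsilon)}$ times additional factors controlled by $\|g\|_{\dot H^{1/4}_t\cap\dot H^{3/4}_t}\lesssim A$; the smallness of $\eta$ still closes the bootstrap, so this is only a cosmetic adjustment to your sketch.
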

For the proof of Proposition \ref{smalldataexistence}, see \cite[Propositions 2.3 and 2.4]{AdamiFukuHolmer}.

\begin{remark}[Existence of wave operators]\label{Exitencescat}
Using the same argument developed in the proof of Lemma 4.2 in \cite{AdamiFukuHolmer}, we can show that given $\psi\in H^{1}(\R)$,
there exists $T\in \R$ and a solution $v\in C([T,\infty), H^{1}(\R))$ to \eqref{NLS} such that
\[
\|v(t)-e^{it\partial^{2}_{x}}\psi\|_{H^{1}}\rightarrow 0 \quad\text{ as $t\to\infty$}.
\]
\end{remark}

\subsection*{Perturbation lemma and linear profile decomposition}
We will use a perturbation result and a lemma for linear profiles.

For the proof of the following result we refer the reader to \cite[Proposition 2.5]{AdamiFukuHolmer}.

\begin{proposition}\label{Perturbationlemma}
Let $p\geq 3$.  For any $M\gg 1$, there exist $\epsilon=\epsilon(M)\ll 1$ and $C=C(M)>0$
such the following holds. If $u\in C([0,\infty), H^{1}(\R))$ is a solution to \eqref{NLS} and if
$\tilde{u}\in C([0,\infty), H^{1}(\R))$ is a solution of the equation with source term $e$:
\[
i\partial_{t}\tilde{u}+\partial^{2}_{x}\tilde{u}+\delta(x)|\tilde{u}|^{p-1}\tilde{u}=\delta(x)e
\]
with  
\[
\|e(0, \cdot)\|_{L_{t}^{b}(\R^{+})}<\epsilon, \quad \|\tilde{u}(0, \cdot)\|_{L_{t}^{a}(\R^{+})}\leq M
\]
and if
\[
\|[e^{i(t-t_{0})\partial^{2}_{x}}(u(t_{0})-\tilde{u}(t_{0}))](0)\|_{L_{t}^{a}((t_{0}, \infty))}\leq \epsilon
\]
for some $t_{0}\geq 0$, then
\[
\|u(0, \cdot)\|_{L_{t}^{a}(\R^{+})}\leq C(M).
\]
\end{proposition}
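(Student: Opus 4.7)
The plan is the standard perturbation argument, adapted to the point-interaction setting via the boundary trace estimates of Proposition \ref{Estimates-S}. Setting $w:=u-\tilde{u}$ and $F(u,\tilde{u}):=|u|^{p-1}u-|\tilde{u}|^{p-1}\tilde{u}$, subtraction of the Duhamel formulas gives
\begin{equation*}
w(t)=e^{i(t-t_{0})\partial_{x}^{2}}w(t_{0})+i\int_{t_{0}}^{t}\frac{e^{ix^{2}/4(t-s)}}{\sqrt{4\pi i(t-s)}}\bigl[F(u,\tilde{u})(0,s)-e(0,s)\bigr]\,ds
\end{equation*}
for $t\geq t_{0}$. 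Since $\|\tilde{u}(0,\cdot)\|_{L^{a}_{t}(\R^{+})}\leq M$, it suffices to control $\|w(0,\cdot)\|_{L^{a}_{t}((t_{0},\infty))}$, after which the triangle inequality yields the conclusion (the contribution from $[0,t_{0}]$ being absorbed by the local existence theory).

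The key step is a partition-and-bootstrap scheme. Fix $\eta=\eta(M)\ll 1$ to be chosen, and split $[t_{0},\infty)$ into $J=J(M,\eta)$ consecutive intervals $I_{j}=[t_{j},t_{j+1})$ on which $\|\tilde{u}(0,\cdot)\|_{L^{a}_{t}(I_{j})}\leq\eta$. Writing $\alpha_{j}:=\|[e^{i(t-t_{j})\partial_{x}^{2}}w(t_{j})](0)\|_{L^{a}_{t}(I_{j})}$ and $\beta_{j}:=\|w(0,\cdot)\|_{L^{a}_{t}(I_{j})}$, Proposition \ref{Estimates-S}(ii) combined with the embeddings in Remark \ref{BoundedGamma} gives
\begin{equation*}
\beta_{j}\leq C\alpha_{j}+C\|F(u,\tilde{u})(0,\cdot)\|_{L^{b}_{t}(I_{j})}+C\|e(0,\cdot)\|_{L^{b}_{t}(I_{j})}.
\end{equation*}
The pointwise bound $|F(u,\tilde{u})|\lesssim(|u|^{p-1}+|\tilde{u}|^{p-1})|w|$ together with the algebraic identity $pb=a$ yields $\|F(u,\tilde{u})(0,\cdot)\|_{L^{b}_{t}(I_{j})}\leq C(\eta^{p-1}+\beta_{j}^{p-1})\beta_{j}$, so that $\beta_{j}\leq C\alpha_{j}+C\eta^{p-1}\beta_{j}+C\beta_{j}^{p}+C\epsilon$. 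Fixing $\eta$ so that $C\eta^{p-1}\leq 1/2$ and assuming $\alpha_{j}+\epsilon$ lies below a universal threshold, a standard continuity argument (the map sending the upper endpoint $t\mapsto\|w(0,\cdot)\|_{L^{a}_{s}([t_{j},t])}$ vanishes at $t=t_{j}$ and is continuous) delivers $\beta_{j}\leq 4C(\alpha_{j}+\epsilon)$.

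To propagate the smallness hypothesis to the next interval, I would write
\begin{equation*}
e^{i(t-t_{j+1})\partial_{x}^{2}}w(t_{j+1})=e^{i(t-t_{j})\partial_{x}^{2}}w(t_{j})+i\,\Pch_{t_{j}}\bigl[F(u,\tilde{u})(0,\cdot)-e(0,\cdot)\bigr](x,t)
\end{equation*}
for $t\geq t_{j+1}$, evaluate at $x=0$, and apply Proposition \ref{Estimates-S}(ii) again to obtain $\alpha_{j+1}\leq\alpha_{j}+C(\eta^{p-1}+\beta_{j}^{p-1})\beta_{j}+C\epsilon\leq 2\alpha_{j}+C'\epsilon$. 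Iterating gives $\alpha_{j}\leq 2^{J}(\epsilon+C'\epsilon)$, and summing $\beta_{j}^{a}$ over $j=0,\ldots,J-1$ yields $\|w(0,\cdot)\|_{L^{a}_{t}((t_{0},\infty))}\leq C(M)$. The principal obstacle is the exponential accumulation of errors across the $J(M)$ subintervals: each iteration roughly doubles $\alpha$, so $\epsilon=\epsilon(M)$ must be chosen with a factor of order $2^{-J(M)}$ built in to keep the bootstrap hypothesis $\alpha_{j}+\epsilon$ below the universal threshold throughout the induction.
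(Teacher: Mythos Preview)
Your proposal is essentially correct and follows the standard partition-and-bootstrap approach; the paper itself does not give a proof but simply refers to \cite[Proposition 2.5]{AdamiFukuHolmer}, where precisely this argument is carried out. One small technical point: in the propagation step you bound $\alpha_{j+1}=\|[e^{i(t-t_{j+1})\partial_{x}^{2}}w(t_{j+1})](0)\|_{L^{a}_{t}(I_{j+1})}$ by $\alpha_{j}$ plus error terms, but the Duhamel identity gives the first term on the right as $\|[e^{i(t-t_{j})\partial_{x}^{2}}w(t_{j})](0)\|_{L^{a}_{t}(I_{j+1})}$, which is over $I_{j+1}$, not $I_{j}$---this is not $\alpha_{j}$ as you defined it. The clean fix is to set $\alpha_{j}:=\|[e^{i(t-t_{j})\partial_{x}^{2}}w(t_{j})](0)\|_{L^{a}_{t}((t_{j},\infty))}$ instead, so that the restriction to any later interval is automatically controlled; the rest of your iteration then goes through verbatim.
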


Using Proposition \ref{Estimates-S}, the proof of the following result is an easy modification of arguments 
used in \cite[Proposition 2.5]{AdamiFukuHolmer}.

\begin{proposition}\label{pertucompact}
Let $p\geq 3$ and $[0, T]$ an compact interval and let $\tilde{u}:[0,T]\times \R \to \C$ be a solution to 
\[
i\partial_{t}\tilde{u}+\partial^{2}_{x}\tilde{u}+\delta(x)|\tilde{u}|^{p-1}\tilde{u}=\delta e
\]
for some source term $e$. Assume that there exists $M>0$ such that 
\[
\|\tilde{u}(0, \cdot)\|_{L_{t}^{a}([0,T]) \cap L_{t}^{\infty}([0,T])}\leq M.
\] 
Moreover, suppose that $u:[0,T]\times \R \to \C$  is solution to \eqref{NLS}. Assume that we have the following smallness conditions
\[\begin{split}
&\|[e^{it\partial^{2}_{x}}(u(0)-\tilde{u}(0))](0)\|_{L_{t}^{a}([0,T]) \cap L_{t}^{\infty}([0,T])}\leq \epsilon,\\
&\|e(0, \cdot)\|_{L_{t}^{b}([0,T]) \cap L_{t}^{\infty}([0,T])}<\epsilon,
\end{split}\]
for some $0<\epsilon<\epsilon_{1}$ with $\epsilon_{1}=\epsilon_{1}(M)$ small constant. Then 
\[
\|[(\tilde{u}-u)](0,\cdot)\|_{L_{t}^{a}([0,T]) \cap L_{t}^{\infty}([0,T])}\leq \epsilon \, C(M,T).
\]
\end{proposition}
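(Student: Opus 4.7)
\textbf{Plan for the proof of Proposition \ref{pertucompact}.} The strategy is the classical Kenig-Merle-style perturbation scheme, adapted to the compact time interval $[0,T]$ and to the point-interaction smoothing estimates of Proposition \ref{Estimates-S}. Setting $v=\tilde u-u$, the Duhamel formulas for $\tilde u$ and $u$ give
\[
v(t)=e^{it\partial^{2}_{x}}(\tilde u(0)-u(0))+i\int_{0}^{t}e^{i(t-s)\partial^{2}_{x}}\delta\bigl[|\tilde u|^{p-1}\tilde u-|u|^{p-1}u\bigr]ds
-i\int_{0}^{t}e^{i(t-s)\partial^{2}_{x}}\delta e(s)\,ds,
\]
and the heart of the proof is to control $\|v(0,\cdot)\|_{L^{a}_{t}\cap L^{\infty}_{t}}$ on $[0,T]$ by the smallness parameter $\epsilon$.

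First I would partition $[0,T]=\bigcup_{k=0}^{N-1}I_{k}$ with $I_{k}=[t_{k},t_{k+1}]$ so that $\|\tilde u(0,\cdot)\|_{L^{a}_{t}(I_{k})}\le\eta$, where $\eta=\eta(p)$ is a small absolute constant chosen below. Since $\|\tilde u(0,\cdot)\|_{L^{a}_{t}([0,T])}\le M$, the minimal such $N$ satisfies $N\le (M/\eta)^{a}+1$, so $N=N(M)$. On each $I_{k}$, evaluating the above Duhamel identity at $x=0$ and invoking Proposition \ref{Estimates-S}(i)-(ii) with $\gamma=\gamma_{c}$, together with the embeddings of Remark \ref{BoundedGamma} (so that $L^{b}_{t}\hookrightarrow \dot H^{(2\gamma_{c}-1)/4}_{t}$ on the forcing side and $\dot H^{(2\gamma_{c}+1)/4}_{t}\hookrightarrow L^{a}_{t}$ on the output side), I obtain
\[
\|v(0,\cdot)\|_{L^{a}_{t}(I_{k})}\le \bigl\|[e^{i(t-t_{k})\partial^{2}_{x}}v(t_{k})](0)\bigr\|_{L^{a}_{t}(I_{k})}
+C\bigl(\|\tilde u(0)\|^{p-1}_{L^{a}(I_{k})}+\|u(0)\|^{p-1}_{L^{a}(I_{k})}\bigr)\|v(0)\|_{L^{a}(I_{k})}+C\epsilon,
\]
via the pointwise inequality $\bigl||z|^{p-1}z-|w|^{p-1}w\bigr|\lesssim (|z|^{p-1}+|w|^{p-1})|z-w|$ and the exponent identity $\frac{1}{b}=\frac{p-1}{a}+\frac{1}{a}$. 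Writing $\|u(0)\|_{L^{a}(I_{k})}\le\eta+\|v(0)\|_{L^{a}(I_{k})}$ and fixing $\eta$ so that $2^{p}C\eta^{p-1}\le\tfrac{1}{2}$, a standard continuity/bootstrap argument closes to give
\[
\|v(0,\cdot)\|_{L^{a}_{t}(I_{k})}\le 2A_{k}+4C\epsilon,\qquad
A_{k}:=\bigl\|[e^{i(t-t_{k})\partial^{2}_{x}}v(t_{k})](0)\bigr\|_{L^{a}_{t}(I_{k})},
\]
provided $\epsilon$ is small with respect to $\eta$ and to the bounds coming from previous subintervals.

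Next I would iterate. Using the Duhamel identity at $t_{k+1}$ I express $e^{i(t-t_{k+1})\partial^{2}_{x}}v(t_{k+1})=e^{i(t-t_{k})\partial^{2}_{x}}v(t_{k})+(\text{Duhamel terms})$ and apply Proposition \ref{Estimates-S}(ii) once more to bound $A_{k+1}\le A_{k}+C\eta^{p-1}\|v(0)\|_{L^{a}(I_{k})}+C\epsilon$. This yields a recursion $A_{k+1}\le (1+\tfrac{1}{2})A_{k}+C'\epsilon$, producing after $N$ steps the bound $A_{k}\le (3/2)^{N}A_{0}+C''(M)\epsilon$, while the initial term $A_{0}$ is precisely $\|[e^{it\partial^{2}_{x}}(u(0)-\tilde u(0))](0)\|_{L^{a}_{t}([0,T])}\le\epsilon$. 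Summing over $k=0,\dots,N-1$ yields $\|v(0,\cdot)\|_{L^{a}_{t}([0,T])}\le\epsilon\,C(M,T)$ with $C(M,T)$ of the form $\mathrm{const}\cdot(3/2)^{N(M)}$, which is the claimed estimate.

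For the $L^{\infty}_{t}$ component, I would run the same scheme in parallel using either the trivial bound $\|f\|_{L^{\infty}_{t}(I_{k})}\le |I_{k}|^{-1/a}\|f\|_{L^{a}_{t}(I_{k})}$ on each subinterval (so the compact-interval hypothesis is essential here), or by combining Proposition \ref{Estimates-S}(iii) with $\gamma=1$ to transfer $L^{\infty}_{t}\dot H^{1}_{x}$ control into pointwise-in-$t$ control of $v(0,\cdot)$ via Sobolev embedding, exploiting the hypothesis that $\tilde u(0,\cdot)$ and $e(0,\cdot)$ are also bounded in $L^{\infty}_{t}$; this requires estimating $|\tilde u|^{p-1}\tilde u-|u|^{p-1}u$ also in the $L^{\infty}_{t}$ norm, which is handled the same way as the $L^{b}_{t}$ bound. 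The main technical obstacle is precisely this $L^{\infty}_{t}$ control: Proposition \ref{Estimates-S} is phrased in terms of Sobolev norms naturally yielding $L^{a}_{t}$, so one must carefully track both endpoints simultaneously and verify that the contraction closes in $L^{a}_{t}\cap L^{\infty}_{t}$ on each $I_{k}$. Once this is done, iterating across the $N(M)$ subintervals produces the final bound $\epsilon\cdot C(M,T)$ in both norms, completing the proof.
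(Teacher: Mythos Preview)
Your overall scheme---partition $[0,T]$ into subintervals on which $\|\tilde u(0,\cdot)\|_{L^a_t}$ is small, run a Duhamel/bootstrap argument on each piece using Proposition~\ref{Estimates-S} and Remark~\ref{BoundedGamma}, then iterate---is precisely the standard perturbation argument the paper has in mind: the paper gives no proof and simply states that the result is ``an easy modification of arguments used in \cite[Proposition 2.5]{AdamiFukuHolmer}'' (i.e.\ of Proposition~\ref{Perturbationlemma}), which is exactly the argument you outline.

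One point does need correction. In your treatment of the $L^\infty_t$ component you invoke the ``trivial bound'' $\|f\|_{L^\infty_t(I_k)}\le |I_k|^{-1/a}\|f\|_{L^a_t(I_k)}$, but this inequality is false (it goes the other way: $\|f\|_{L^a_t(I_k)}\le |I_k|^{1/a}\|f\|_{L^\infty_t(I_k)}$), so that route cannot close the $L^\infty_t$ estimate. Your second suggestion is the correct one: run the same Duhamel scheme in the intersection norm $L^a_t\cap L^\infty_t$ directly, using Proposition~\ref{Estimates-S} at a regularity $\gamma\in(\tfrac12,1]$ so that the output norm $\dot H^{(2\gamma+1)/4}_t$ controls $L^\infty_t$ on the compact interval (this is where the $T$-dependence in $C(M,T)$ genuinely enters), and estimating the nonlinear difference $|\tilde u|^{p-1}\tilde u-|u|^{p-1}u$ in $L^b_t\cap L^\infty_t$ via H\"older in both components. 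With that adjustment the bootstrap closes simultaneously in $L^a_t\cap L^\infty_t$ on each $I_k$, and the iteration goes through as you describe.
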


We need the following linear profile decomposition, which is a key ingredient.

\begin{proposition}[Linear profile decomposition]\label{Linear-profile-lemma}
Let $p> 3$.  Let $\left\{\phi_{n}\right\}_{n\geq 1}$ be a bounded sequence of $H^{1}(\R)$.
Then for each integer $J\geq 1$, there exists a subsequence, which we still denote by $\left\{\phi_{n}\right\}_{n\geq 1}$,
 and \\
(i) for each $1\leq j \leq J$, there exists a fixed profile $\psi^{j}\in H^{1}(\R)$;\\
(ii) for each $1\leq j \leq J$, there exits a sequence of time shifts $\left\{t^{j}_{n}\right\}_{n\geq 1}\subset \R$; \\
(iii) for each $1\leq j \leq J$, there exits a sequence of remainders $\left\{W^{J}_{n}\right\}_{n\geq 1}\subset H^{1}(\R)$
such that we can write
\[
\phi_{n}=\sum^{J}_{j=1}e^{-it^{j}_{n}\partial^{2}_{x}}\psi^{j}+W^{J}_{n},
\]
and the following hold.
\begin{itemize}[leftmargin=5mm]
		\item Orthogonality of the parameters:
\begin{equation}\label{Ortho}
	\lim_{n\to\infty}|t^{i}_{n}-t^{j}_{n}|=\infty, \quad \text{for $1\leq i\neq j\leq J$}.
\end{equation}
    \item Asymptotic smallness of the reminder:
		\begin{equation}\label{AsySmall}
\lim_{J\to\infty}\( \lim_{n\to\infty} \| [e^{it\partial^{2}_{x}}W^{J}_{n}](0)\|_{L^{a}_{t}(\R)\cap L^{\infty}_{t}(\R)}\)=0.
\end{equation}
		\item  Orthogonality in norms: for fixed $J$ and any $\gamma\in [0,1]$,
		\begin{equation}\label{Normsortho}
	\|  \phi_{n} \|^{2}_{\dot{H}^{\gamma}}=\sum^{J}_{j=1}\| \psi^{j} \|^{2}_{\dot{H}^{\gamma}}+
		\| W^{J}_{n} \|^{2}_{\dot{H}^{\gamma}}+o_{n}(1).
\end{equation}
\item Asymptotic Pythagorean expansion: for fixed $J$, 
		\begin{equation}\label{PythagoreanEx}
		E(\phi_{n})=\sum^{J}_{j=1}E(e^{-it^{j}_{n}\partial^{2}_{x}}\psi^{j})+E(W^{J}_{n})+o_{n}(1).
\end{equation}
		\end{itemize}
\end{proposition}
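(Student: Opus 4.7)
The plan is to follow the classical Keraani extraction scheme, as implemented for the 3D cubic NLS by Duyckaerts--Roudenko \cite{DR5}, adapted to the present setting. The key simplification here is that the Cauchy problem is $H^1$-subcritical and the nonlinearity is localized at $x=0$, so the only non-compact symmetry that must be tracked is time translation; no spatial translations or scalings appear in the decomposition.

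The main technical ingredient is an \emph{inverse Strichartz estimate} adapted to the boundary norm that governs scattering for \eqref{NLS}: if $\{\phi_n\}$ is bounded in $H^1(\R)$ and
\[
\limsup_{n\to\infty}\|[e^{it\partial^2_x}\phi_n](0)\|_{L^a_t(\R)\cap L^\infty_t(\R)}\geq \varepsilon,
\]
then, passing to a subsequence, there exist $\psi\in H^1(\R)$ and $t_n\in\R$ such that $e^{it_n\partial^2_x}\phi_n\rightharpoonup \psi$ weakly in $H^1(\R)$, with $\|\psi\|_{H^1}\gtrsim \varepsilon^{\kappa}$ for some $\kappa=\kappa(p)>0$. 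To prove this I would use the one-dimensional reduction
\[
[e^{it\partial^2_x}\phi](0,t)=\int_0^{\infty}e^{-it\tau}\,\frac{\widehat\phi(\sqrt\tau)+\widehat\phi(-\sqrt\tau)}{2\sqrt\tau}\,d\tau,
\]
which recasts the boundary trace as the 1D Fourier transform of an auxiliary function whose $\dot H^{(2\gamma_c+1)/4}$ norm is controlled by $\|\phi\|_{\dot H^{\gamma_c}}$ via Proposition \ref{Estimates-S}(i). A refined Sobolev inequality, obtained through Littlewood--Paley decomposition in $\tau$, then isolates a dyadic frequency scale carrying a definite fraction of the $L^a_t$ mass; weak-$H^1$ compactness along a suitable time translation yields $\psi$.

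With the inverse estimate in hand, I would iterate in the usual way: set $\phi_n^{(1)}:=\phi_n$, extract $(\psi^1,t_n^1)$, define $W_n^1:=\phi_n-e^{-it_n^1\partial^2_x}\psi^1=:\phi_n^{(2)}$, and repeat. After passing to further subsequences, one may assume that for every $j$ either $t_n^j\equiv 0$ or $|t_n^j|\to\infty$. Orthogonality of time shifts \eqref{Ortho} is forced by the construction, since a bounded difference $t_n^i-t_n^j$ would contradict the weak-limit step defining one of the profiles. The Sobolev norm orthogonality \eqref{Normsortho} follows by expanding $\|\cdot\|^2_{\dot H^\gamma}$, using weak convergence against the fixed profiles, and noting that the cross terms $\langle\psi^i,e^{i(t_n^i-t_n^j)\partial^2_x}\psi^j\rangle_{\dot H^\gamma}$ vanish as $n\to\infty$ because $e^{is\partial^2_x}\psi^j\rightharpoonup 0$ in $H^1$ whenever $|s|\to\infty$. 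The asymptotic smallness \eqref{AsySmall} then follows by contraposition: the quantitative lower bound $\|\psi^j\|_{H^1}\gtrsim \varepsilon_j^{\kappa}$ together with \eqref{Normsortho} for $\gamma=1$ forces the boundary-Strichartz sizes $\varepsilon_j$ of the successive remainders to tend to $0$.

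The step I expect to be the most delicate is the energy Pythagorean expansion \eqref{PythagoreanEx}, which, via $E=\tfrac{1}{2}K-\tfrac{1}{p+1}N$ and \eqref{Normsortho} for $\gamma=1$, reduces to the corresponding decomposition for the pointwise, nonlinear functional $N(u)=|u(0)|^{p+1}$. The key observation is that the time-shift orthogonality allows at most one index $j_0$ with $t_n^{j_0}\equiv 0$; for every other $j$ the dispersive decay of the free 1D Schr\"odinger kernel at $x=0$ (applied after $L^1\cap H^1$-density) gives $[e^{-it_n^j\partial^2_x}\psi^j](0)\to 0$. Expanding $\phi_n(0)=\sum_{j=1}^J[e^{-it_n^j\partial^2_x}\psi^j](0)+W_n^J(0)$ and combining this with the weak convergence $e^{it^{j_0}_n\partial^2_x}W_n^J\rightharpoonup 0$ in $H^1$ (together with the continuity of the trace $f\mapsto f(0)$ on $H^1$) yields the required asymptotic identity for $N$, and hence \eqref{PythagoreanEx}.
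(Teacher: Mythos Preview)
Your proposal is correct and follows the same Keraani-type extraction scheme as the paper; the orthogonality arguments and your treatment of the energy Pythagorean expansion via the trace $f\mapsto f(0)$ match what the paper (deferring details to \cite{AdamiFukuHolmer}) does. The one place where the implementations diverge is the inverse step. You propose to recast $t\mapsto[e^{it\partial_x^2}\phi_n](0)$ as a one-dimensional Fourier transform and locate a concentration scale through a Littlewood--Paley refined Sobolev inequality, in the spirit of the classical Keraani argument for Strichartz norms. The paper takes a shorter route by exploiting the $L^\infty_t$ component of the boundary norm directly: if $C_1:=\limsup_n\|[e^{it\partial_x^2}\phi_n](0)\|_{L^\infty_t}>0$, one simply chooses $t_n^1$ with $|[e^{it_n^1\partial_x^2}\phi_n](0)|\geq C_1/2$, and the compact embedding $H^1[-1,1]\hookrightarrow C[-1,1]$ passes this through the weak $H^1$ limit to give $|\psi^1(0)|\geq C_1/2$, hence $\|\psi^1\|_{H^1}^2\geq C_1^2/2$ by the trace inequality. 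The $L^a_t$ half is then controlled \emph{separately} via a single smooth frequency cutoff $\zeta_{R}$ (with $R$ chosen in terms of $A=\limsup_n\|\phi_n\|_{H^1}$ and $B_1:=\limsup_n\|[e^{it\partial_x^2}\phi_n](0)\|_{L^a_t}$), which yields an additional lower bound on $\|\psi^1\|_{\dot H^{\gamma_c}}$; summing each family of lower bounds over $j$ and invoking \eqref{Normsortho} forces $C_J\to 0$ and $B_J\to 0$ independently. Your refined-Sobolev route also works, but the paper's argument buys simplicity: when an $L^\infty_t$ norm is part of the smallness criterion, no dyadic frequency analysis is needed to select the translation parameter.
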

\begin{proof}
We follow the same spirit as in the proof of \cite[Proposition 3.1]{AdamiFukuHolmer}. 
Let $A=\limsup_{n\to \infty}\|\phi_{n}\|_{H_{x}^{1}(\R)}$. For $R>0$,
we fix a real-valued, symmetric function $\zeta_{R}\in C^{\infty}_{c}(\R)$ such that $0\leq \hat{\zeta}_{R}\leq 1$, $\hat{\zeta}_{R}(\xi)=1$
for $\frac{1}{R}\leq |\xi|\leq R$ and supported in $\frac{1}{2R}\leq |\xi|\leq 2R$.  
Let $C_{1}:=\limsup_{n\rightarrow \infty}\| [e^{it\partial^{2}_{x}}\phi_{n}](0)\|_{L^{\infty}_{t}(\R)}$ 
and
$B_{1}:=\limsup_{n\rightarrow \infty}\| [e^{it\partial^{2}_{x}}\phi_{n}](0)\|_{L^{a}_{t}(\R)}$. 
% If $C_{1}=0$, then we way take $\psi^{j}=0$ for all $1\leq j\leq J$ and the proof is complete. 
We have four cases: (i) $C_{1}=0$ and $B_{1}=0$; (ii) $C_{1}>0$ and $B_{1}=0$; (iii) $C_{1}=0$ and $B_{1}>0$ and 
(iv) $C_{1}>0$ and $B_{1}>0$. We only deal with the case $C_{1}>0$ and $B_{1}>0$. 
The proof in the other cases is similar.

Suppose that
$C_{1}>0$ and $B_{1}>0$. Pass to a subsequence, we may assume 
$\lim_{n\rightarrow \infty}\| [e^{it\partial^{2}_{x}}\phi_{n}](0)\|_{L^{\infty}_{t}(\R)}=C_{1}$. In particular, passing to a subsequence if necessary, we have that
\[
\sup_{t\in \R}|[e^{it\partial^{2}_{x}}\phi_{n}](0)|\geq \frac{C_{1}}{2}.
\]
It follows that there exists a sequence of times $\left\{t^{1}_{n}\right\}_{n\geq1}$
such that
\begin{equation}\label{inflimit}
|[e^{it^{1}_{n}\partial^{2}_{x}}\phi_{n}](0)|\geq \frac{C_{1}}{2},
\end{equation}
for all $n\geq 1$. Since $\|e^{it^{1}_{n}\partial^{2}_{x}}\phi_{n}\|_{H^{1}}\lesssim 1$ for all $n$, there exists $\psi^{1}$ such that
$e^{it^{1}_{n}\partial^{2}_{x}}\phi_{n}\rightharpoonup \psi^{1}$ in $H^{1}(\R)$ as $n \to \infty$. By using the compact embedding 
$H^{1}[-1, 1]\hookrightarrow C[-1, 1]$ we infer that $[e^{it^{1}_{n}\partial^{2}_{x}}\phi_{n}](0)\to \psi^{1}(0)$. Then, from 
\eqref{inflimit} we get $|\psi^{1}(0)|\geq \frac{C_{1}}{2}$. In particular, $\psi^{1}\neq 0$ and by the  inequality 
$2|\psi^{1}(0)|^{2}\leq \|\psi^{1}\|^{2}_{H^{1}}$we see that
\begin{equation}\label{SobInf}
\|\psi^{1}\|^{2}_{H^{1}}\geq \frac{(C_{1})^{^{2}}}{2}.
\end{equation}
We set $W^{1}_{n}=\phi_{n}-e^{-it^{1}_{n}\partial^{2}_{x}}\psi^{1}$. Then we obtain that for $0\leq \gamma \leq 1$,
\[
\lim_{n \to \infty}\|W^{1}_{n}\|^{2}_{\dot{H}^{\gamma}}=\lim_{n \to \infty}\|\phi_{n}\|^{2}_{\dot{H}^{\gamma}}
-\|\psi^{1}\|^{2}_{\dot{H}^{\gamma}}.
\]
In particular, $\limsup_{n \to \infty}\|W^{1}_{n}\|^{2}_{{H}^{1}}<\infty$. 
Now, by translation invariance of $L^{a}_{t}(\R)$-norm, the argument developed in \cite[Proposition 3.1]{AdamiFukuHolmer}  shows that by choosing $R_{1}=\left\langle 2AB^{-1}_{1}\right\rangle^{\max\left\{\frac{1}{\gamma_{c}}, \frac{1}{1-\gamma_{c}}\right\}}$ (note that $B_{1}>0$), we have
\[
\begin{split}
\(\frac{1}{2}B_{1}\)^{\frac{a}{a-2}}&=\frac{1}{2^{\frac{a}{a-2}}}\lim_{n\to\infty}\|[e^{i(t+t^{1}_{n})\partial^{2}_{x}}\phi_{n}](0)\|^{\frac{a}{a-2}}_{L^{a}_{t}(\R)}\\
&\leq [A^{2}R_{1}]^{\frac{1}{a-2}}
\lim_{n \to \infty}\| [\zeta_{R_{1}}\ast e^{i(t+t^{1}_{n})\partial^{2}_{x}}\phi_{n}](0)\|_{L^{\infty}_{t}(\R)}.
\end{split}
\]
Since $e^{it\partial^{2}_{x}}$ commutes with the convolution with $\zeta_{R_{1}}$,  by the weak convergence we infer that
\[
\lim_{n \to \infty}\| [\zeta_{R_{1}}\ast e^{i(t+t^{1}_{n})\partial^{2}_{x}}\phi_{n}](0)\|_{L^{\infty}_{t}(\R)}
=\| [\zeta_{R_{1}}\ast e^{it\partial^{2}_{x}}\psi^{1}](0)\|_{L^{\infty}_{t}(\R)}.
\]
Moreover, by plancherel's formula we see that
\[
\| [\zeta_{R_{1}}\ast e^{it\partial^{2}_{x}}\psi^{1}](0)\|_{L^{\infty}_{t}(\R)}\leq C_{\gamma_{c}
}R_{1}^{\frac{1-2\gamma_{c}}{2}}\|\psi^{1}\|_{\dot{H}^{\gamma_{c}}}.
\]
Thus, we obtain
\begin{equation*}
\|\psi^{1}\|_{\dot{H}^{\gamma_{c}}}\geq 
[{C_{\gamma_{c}}}]^{-1}\( \frac{B_{1}}{2} \)^{\frac{a}{a-2}} A^{\frac{2}{2-a}}R^{-\theta}_{1},
\end{equation*}
where $\theta=\frac{a}{a-2}+\frac{1-2\gamma_{c}}{2}$.

Next, we obtain the functions $\psi^{j}$, for all $j\geq 2$ inductively (see, for example \cite[Lemma 5.2]{HolmerRoudenko2008}). Indeed, we construct a 
sequence $\left\{t^{j}_{n}\right\}_{n\geq1}$ and a profile $\psi^{j}$ such that 
\begin{align}\label{SobInf11}
	\|\psi^{j}\|^{2}_{H^{1}}&\geq \frac{(C_{J})^{^{2}}}{2},\\\label{InequalPor}
	\|\psi^{j}\|_{\dot{H}^{\gamma_{c}}}&\geq 
[C_{\gamma_{c}}]^{-1}\( \frac{B_{J}}{2} \)^{\frac{a}{a-2}}A^{\frac{2}{2-a}}R_{J}^{-\theta},
\end{align}
where
\begin{align*}
C_{J}:=\limsup_{n\rightarrow \infty}\| [e^{it\partial^{2}_{x}}W^{J-1}_{n}](0)\|_{L^{\infty}_{t}(\R)},\\
B_{J}:=\limsup_{n\rightarrow \infty}\| [e^{it\partial^{2}_{x}}W^{J-1}_{n}](0)\|_{L^{a}_{t}(\R)}.
\end{align*}
But then, by \eqref{SobInf11} and \eqref{InequalPor} we infer 
\[
\sum^{\infty}_{J=1}\frac{(C_{J})^{^{2}}}{2}\leq \lim_{J \to \infty}\sum^{J}_{j=1}\|\psi^{j}\|^{2}_{H^{1}}
\leq \lim_{n \to \infty}\|\phi_{n}\|^{2}_{H^{1}}\lesssim 1.
\]
and
\[
\sum^{\infty}_{J=1} {B_{J}}^{\frac{a}{a-2}}R_{J}^{-\theta} \lesssim \lim_{J \to \infty}\sum^{J}_{j=1}\|\psi^{j}\|_{H^{\gamma_{c}}}
\lesssim 1.
\]
Therefore $C_{J}\to 0$ and $B_{J}\to 0$ as $n\to \infty$. Here we have used that $\theta>0$.

The remainder of the proof is similar to that of  
\cite[Proposition 3.1]{AdamiFukuHolmer}. This completes the proof of proposition.
\end{proof}

\subsection*{Scattering criterion}
Suppose that $u(t)$ is a solution of Cauchy problem \eqref{NLS} with initial data $u_{0}\in H^{1}(\R)$ 
satisfying \eqref{InequSc}. Notice that by the conservation of energy and assumption \eqref{InequSc}
we see that
\[
\sup_{t\in [0,T_{+})}K(u(t))\leq C(E(u_{0}), Q),
\]
which implies that the solution is global (i.e., $T_{+}=\infty$). Note also that to get scattering criterion in Theorem \ref{Th1}, from Proposition \ref{Condition-scatte}, it is enough to get $\|u(0, \cdot)\|_{L^{a}_{t}(\R^{+})}<\infty$. 

Let $L>0$ and $\delta>0$. For $u(t)$ satisfying 
\begin{equation}\label{ConditionsimpliSc}
\sup_{t\in [0,\infty)}N(u(t))[M(u(t))]^{\sigma_{c}}\leq L, \quad E(u(t))[M(u(t))]^{\sigma_{c}}\leq \delta,
\end{equation}
with $L<N(Q)[M(Q)]^{\sigma_{c}}$, we define
\[
S(L,\delta):=\sup\left\{  \|u(0, \cdot)\|_{L^{a}_{t}(\R^{+})}: \,\,\ 
\text{$u(t)$ satisfies \eqref{ConditionsimpliSc}}\right\}.
\]
By \eqref{IneEnergy} we infer that $E(u_{0})\geq 0$. Thus, by interpolation, \eqref{IneEnergy} and \eqref{ConditionsimpliSc} we obtain
\[
\|  u_{0} \|^{\frac{2}{\gamma_{c}}}_{\dot{H}^{\gamma_{c}}}\leq K(u_{0})[M(u_{0})]^{\sigma_{c}}\leq \frac{16}{\eta} \delta.
\]
By inequality above,  Propositions \ref{Condition-scatte} and \ref{smalldataexistence}, we have that $S(L,\delta)<\infty$ for $\delta$ small enough. 

Suppose by contradiction that Theorem \ref{Th1} fails. Then $S(L,\delta)=\infty$ for some $\delta<\infty$. As $S(L,\delta)<\infty$ 
for $\delta \ll 1$, using the monotonicity of $S(L,\delta)$, there exists a critical level $0<\delta_{c}(L)<\infty$ such that
\begin{equation}\label{criticald}
\delta_{c}:=\delta_{c}(L)=\sup\left\{\delta: S(L,\delta)<\infty\right\}=\inf\left\{\delta: S(L,\delta)=\infty\right\}.
\end{equation}
Notice that $S(L,\delta_{c})=\infty$. By definition, this implies that  there exists a sequence of initial data $u_{n}(0)$
such that
\begin{equation}\label{sequencecritical}
\begin{split}
&\sup_{t\in [0,\infty)}N(u_{n}(t))[M(u_{n}(t))]^{\sigma_{c}}\leq L, \quad E(u_{n}(t))[M(u_{n}(t))]^{\sigma_{c}}\searrow \delta_{c},\\
&\|u_{n}(0, \cdot)\|_{L^{a}_{t}(\R^{+})}=\infty \quad \text{for all $n$,}
\end{split}
\end{equation}
where $u_{n}(t)$ is a global solution (i.e., $T_{+}=\infty$) to \eqref{NLS} with initial data $u_{n}(0)$.
Now, our goal is to prove the existence of a critical element $u_{c}(t)$ such that
$E(u_{c}(t))[M(u_{c}(t))]^{\sigma_{c}}=\delta_{c}$, $\sup_{t\in [0,\infty)}N(u_{c}(t))[M(u_{c}(t))]^{\sigma_{c}}=L$
and \[\|u_{c}(0, \cdot)\|_{L^{a}_{t}(\R^{+})}=\infty.\] More precisely, we have the following result.

\begin{proposition}[Existence and compactness of critical element]\label{Compactness-critical}
There exists $u_{c}(0)\in H^{1}(\R)$ such that if $u_{c}(t)$ is the corresponding solution of \eqref{NLS} 
with initial data $u_{c}(0)$, then  $u_{c}(t)$ satisfies
\begin{equation}\label{criticalsolution}
\begin{split}
&M(u_{c}(t))=1, \quad\sup_{t\in [0,\infty)}N(u_{c}(t))=L, \\
& E(u_{c}(t))=\delta_{c},\quad \|u_{c}(0, \cdot)\|_{L^{a}_{t}(\R^{+})}=\infty
\end{split}
\end{equation}
and $\Omega:=\left\{u_{c}(t): t\geq 0 \right\}$ is precompact in $H^{1}(\R)$.
\end{proposition}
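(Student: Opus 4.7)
The plan is to execute the Kenig-Merle/Duyckaerts-Roudenko concentration-compactness procedure, producing the critical element as the limit of a subsequence of $u_n(0)$ via linear profile decomposition, and then deducing precompactness by rerunning the same argument along an arbitrary time translation of $u_c$. I would first use the scaling invariance \eqref{Scale-inv}, which preserves the products $E[M]^{\sigma_c}$, $K[M]^{\sigma_c}$ and $N[M]^{\sigma_c}$, to normalize $M(u_n(0))=1$; then \eqref{sequencecritical} reads $N(u_n(t))\le L$, $E(u_n)\searrow\delta_c$, and $\|u_n(0,\cdot)\|_{L^a_t(\R^+)}=\infty$. Since $L<N(Q)[M(Q)]^{\sigma_c}$, the coercivity Lemma \ref{Coercivity} gives $K(u_n(t))\lesssim E(u_n)\lesssim 1$, so $\{u_n(0)\}$ is bounded in $H^1(\R)$ and Proposition \ref{Linear-profile-lemma} provides a profile decomposition
\[
u_n(0)=\sum_{j=1}^{J} e^{-it_n^j\partial_x^2}\psi^j + W_n^J
\]
with the orthogonalities \eqref{Ortho}--\eqref{PythagoreanEx} and the remainder smallness \eqref{AsySmall}.

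To each $\psi^j$ I would associate a nonlinear profile $v^j$: when $\{t_n^j\}$ is bounded, reduce to $t_n^j=0$ and take $v^j(0)=\psi^j$; when $t_n^j\to\pm\infty$, use the wave-operator construction of Remark \ref{Exitencescat}. Suppose, aiming at a contradiction, that at least two $\psi^j$ are nonzero. For profiles with divergent shifts, dispersion at the origin makes $N(e^{-it_n^j\partial_x^2}\psi^j)\to 0$, whereas for a bounded-shift profile the compact embedding $H^1[-1,1]\hookrightarrow C[-1,1]$ applied to the weak limit yields $|\psi^j(0)|^{p+1}\le L$. Combined with $M(\psi^j)<1$ (strict mass orthogonality) and the energy Pythagorean \eqref{PythagoreanEx} with the bound $E\ge\tfrac{\eta}{16}K\ge 0$ from Lemma \ref{Coercivity}, this forces each nonlinear profile strictly below the $(L,\delta_c)$-threshold. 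By the definition of $\delta_c$ in \eqref{criticald}, each $v^j$ is then global and scatters with $\|v^j(0,\cdot)\|_{L^a_t(\R^+)}<\infty$. I would form
\[
\tilde u_n(t):=\sum_{j=1}^{J} v^j(t+t_n^j)+e^{it\partial_x^2}W_n^J,
\]
use the time orthogonality \eqref{Ortho} to decouple the $L^a_t$-norms of the boundary traces and \eqref{AsySmall} to make the source term small, and apply Propositions \ref{Perturbationlemma}--\ref{pertucompact} to conclude $\|u_n(0,\cdot)\|_{L^a_t(\R^+)}<\infty$, contradicting \eqref{sequencecritical}. Hence exactly one profile $\psi^1$ is nonzero.

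I would next rule out $t_n^1\to\pm\infty$ by the same mechanism (the associated wave-operator solution has finite $L^a_t$-norm, which by the perturbation propositions propagates to $u_n$). So $t_n^1$ is bounded and, after time translation, $t_n^1=0$. The $L^2$ and $\dot H^1$ Pythagorean identities together with $E(u_n)\to E(\psi^1)=\delta_c$ then force $W_n^1\to 0$ in $H^1(\R)$, so $u_n(0)\to u_c(0):=\psi^1$ in $H^1$; continuous dependence produces \eqref{criticalsolution}. For the precompactness of $\Omega=\{u_c(t):t\ge 0\}$ I would feed an arbitrary sequence $\{u_c(\tau_n)\}$, $\tau_n\to\infty$, into the same dichotomy: since $u_c$ is critical and does not scatter, \eqref{sequencecritical} is again saturated, and the one-profile analysis produces a subsequential $H^1$-limit.

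The main obstacle is ensuring that each nonlinear profile $v^j$ in the multi-profile step satisfies the \emph{uniform-in-time} bound $\sup_{t}N(v^j(t))[M(v^j)]^{\sigma_c}\le L$ required by \eqref{criticald}, not merely an initial-time inequality. This propagation must be extracted by translating the energy control into a $K[M]^{\sigma_c}$ bound via Lemma \ref{Coercivity} and then invoking \eqref{Ga-Ni-Inequ} to dominate $N[M]^{\sigma_c}$. A secondary technical difficulty, caused by the $(t-s)^{-1/2}$ kernel in \eqref{DuhamelF} evaluated only at $x=0$, is that the profile orthogonality \eqref{Ortho} must be exploited directly in the $L^a_t$-norm of the boundary traces when estimating the cross terms in $\tilde u_n$, rather than in the standard spacetime Strichartz norms used in the classical Duyckaerts-Roudenko setting.
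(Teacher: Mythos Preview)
You have correctly identified the main obstacle, but the resolution you propose does not work, and this is precisely the point where your argument diverges from the paper's and develops a genuine gap.

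To eliminate the multi-profile scenario you want to show that each nonlinear profile $v^{j}$ satisfies $\sup_{t}N(v^{j}(t))[M(v^{j})]^{\sigma_{c}}\le L$, so that the definition of $\delta_{c}$ forces $\|v^{j}(0,\cdot)\|_{L^{a}_{t}}<\infty$. Your proposed route is ``energy control $\Rightarrow$ $K[M]^{\sigma_{c}}$ bound via Lemma~\ref{Coercivity} $\Rightarrow$ $N[M]^{\sigma_{c}}$ bound via \eqref{Ga-Ni-Inequ}.'' This is circular: Lemma~\ref{Coercivity} takes $N(f)[M(f)]^{\sigma_{c}}\le L$ as a \emph{hypothesis}, not a conclusion, so you cannot invoke it to produce a $K$ bound without first knowing the very $N$ bound you are after. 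Moreover, even granting a uniform bound on $K(v^{j}(t))[M(v^{j})]^{\sigma_{c}}$, the Gagliardo--Nirenberg inequality only yields $N[M]^{\sigma_{c}}\le (K[M]^{\sigma_{c}})^{(p+1)/4}$, which is $\le L$ only when $K[M]^{\sigma_{c}}$ lies below a specific sub-threshold level; since the whole point of Theorem~\ref{Th1} is that we are potentially \emph{above} the mass--energy threshold, there is no trapping argument available at the arbitrary level $L<N(Q)[M(Q)]^{\sigma_{c}}$. The bound $N(v^{j}(t))[M(v^{j})]^{\sigma_{c}}\le L$ simply cannot be propagated from $t=0$ by variational means.

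The paper handles this obstacle in a different way. It does not try to prove that each $v^{j}$ lies in the class \eqref{ConditionsimpliSc}; instead it uses the \emph{contrapositive} of the definition of $\delta_{c}$: if a profile $v^{j_{1}}$ satisfies $E(v^{j_{1}})[M(v^{j_{1}})]^{\sigma_{c}}<\delta_{c}$ yet has $\|v^{j_{1}}_{n}(0,\cdot)\|_{L^{a}_{t}}=\infty$, then necessarily $\sup_{t}N(v^{j_{1}}(t))[M(v^{j_{1}})]^{\sigma_{c}}\ge L$. To convert this into a contradiction, the paper proves a \emph{nonlinear} Pythagorean expansion for $N$ along the flow (Lemma~\ref{ExpanBoundedNLS}), namely $N(u_{n}(t))=\sum_{j}N(v^{j}_{n}(t))+N(\tilde W^{J}_{n}(t))+o_{n,J}(1)$ uniformly on carefully chosen compact intervals $I^{m}_{n}$ (see \eqref{CompactI}). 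Since $M(v^{j}_{n})\le 1$ and $N(u_{n}(t))\le L$, this forces $\sum_{j}N(v^{j}_{n}(t))[M(v^{j}_{n})]^{\sigma_{c}}\le L$, which is incompatible with the single term $N(v^{j_{1}}_{n})[M(v^{j_{1}}_{n})]^{\sigma_{c}}$ reaching $L$ when more than one profile is present. The missing ingredient in your outline is exactly this nonlinear decomposition of $N$ on growing compact intervals; without it the multi-profile scenario cannot be closed.
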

\begin{proof}
We observe that the quantities $E(u)[M(u)]^{\sigma_{c}}$ and $\sup_{t\geq 0}N(u(t))[M(u(t))]^{\sigma_{c}}$ 
are both invariant under the scaling \eqref{Scale-inv}. Thus, since the equation \eqref{NLS} also is invariant under
\eqref{Scale-inv}, we can assume that 
\begin{equation}\label{criticalscaling}
\begin{split}
&M(u_{n}(0))=1, \quad\sup_{t\in [0,\infty)}N(u_{n}(t))\leq L, \\
& E(u_{n}(0))=\delta_{c},\quad \|u_{n}(0, \cdot)\|_{L^{a}_{t}(\R^{+})}=\infty,
\end{split}
\end{equation}
and we may apply the profile decomposition to $\varphi_{n}:=u_{n}(0)$. Therefore, by Proposition \ref{Linear-profile-lemma} we write
\begin{equation}\label{initialprimer}
\varphi_{n}=\sum^{J}_{j=1}e^{-it^{j}_{n}\partial^{2}_{x}}\psi^{j}+W^{J}_{n}
\end{equation}
for all $n\geq1$, where the sequences satisfy properties \eqref{Ortho}-\eqref{PythagoreanEx}. Moreover, 
we may assume that either $t^{j}_{n}=0$ or $|t^{j}_{n}|\rightarrow \infty$.

Define the nonlinear profile $v^{j}$ associated to $\psi^{j}$ in the following way:\\
(i) If $t^{j}_{n}=0$, then $v^{j}$ is the maximal solution to equation \eqref{NLS} with initial data $v^{j}(0)=\psi^{j}$;\\
(ii) If $t^{j}_{n}\rightarrow \infty$, then $v^{j}$ is the maximal solution to equation \eqref{NLS} that scatters
backward in time to $e^{it\partial^{2}_{x}}\psi^{j}$, which existence is guaranteed by Remark \ref{Exitencescat}. 
In particular,
\[
\lim_{n\to\infty}\|v^{j}(-t^{j}_{n})-e^{-it^{j}_{n}\partial^{2}_{x}}\psi^{j}\|_{H^{1}}=0.
\]
(iii) Similarly,  if $t^{j}_{n}\rightarrow -\infty$, then $v^{j}$ is the maximal solution to equation \eqref{NLS} that scatters
forward in time to $e^{it\partial^{2}_{x}}\psi^{j}$. In particular, 
\[
\lim_{n\to\infty}\|v^{j}(-t^{j}_{n})-e^{-it^{j}_{n}\partial^{2}_{x}}\psi^{j}\|_{H^{1}}=0.
\]
Let $v^{j}_{n}(t):=v^{j}(t-t^{j}_{n})$. This is still solution of equation \eqref{NLS} and satisfies
\begin{equation}\label{Limone}
\lim_{n\to\infty}\|v_{n}^{j}(0)-e^{-it^{j}_{n}\partial^{2}_{x}}\psi^{j}\|_{H^{1}}=0.
\end{equation}
Now, we rewrite \eqref{initialprimer} as 
\begin{equation}\label{second-initial}
\varphi_{n}(x)=\sum^{J}_{j=1}v^{j}_{n}(x,0)+\tilde{W}^{J}_{n}(x),
\end{equation}
for $n\geq1$,  where
\begin{equation}\label{Residual}
\tilde{W}^{J}_{n}(x)=\sum^{J}_{j=1}[e^{-it^{j}_{n}\partial^{2}_{x}}\psi^{j}(x)-v^{j}_{n}(x,0)]+{W}^{J}_{n}(x).
\end{equation}
From Remark \ref{BoundedGamma}, Sobolev inequality and Proposition \ref{Estimates-S} (i) we infer that
\begin{align*}
	&\|[e^{it\partial^{2}_{x}} \tilde{W}^{J}_{n}](0) \|_{L^{a}_{t}(\R)\cap L^{\infty}_{t}(\R)}\\
	& \leq \sum^{J}_{j=1}\|e^{it\partial^{2}_{x}} [e^{-it^{j}_{n}\partial^{2}_{x}}\psi^{j}-v^{j}_{n}(0, \cdot)](0) \|_{L^{a}_{t}(\R)\cap L^{\infty}_{t}(\R) }
	+\|[e^{it\partial^{2}_{x}} {W}^{J}_{n}](0) \|_{L^{a}_{t}(\R)\cap L^{\infty}_{t}(\R)}\\
	&\leq C\sum^{J}_{j=1}\|[e^{-it^{j}_{n}\partial^{2}_{x}}\psi^{j}-v^{j}_{n}](0)\|_{{H}^{1}_{x}}+
	\|[e^{it\partial^{2}_{x}} {W}^{J}_{n}](0) \|_{L^{a}_{t}(\R)\cap L^{\infty}_{t}(\R)}.
\end{align*}
Using \eqref{AsySmall} and \eqref{Limone} we get
\begin{equation}\label{limnew}
\lim_{J\to\infty}\(\lim_{n\to\infty}
\|[e^{it\partial^{2}_{x}} \tilde{W}^{J}_{n}](0) \|_{L^{a}_{t}(\R)\cap L^{\infty}_{t}(\R)}\)=0.
\end{equation}

Note that by \eqref{Normsortho},  there exists $J_{\ast}\in \N$ such that  $\|\psi^{j}\|_{H^{1}}\leq \delta_{sd}$ 
for $j\geq J_{\ast}$. Therefore, from \eqref{Limone} and using Propositions
\ref{smalldataexistence} and \ref{Condition-scatte} we infer that for $j\geq J_{\ast}$ the solutions $v^{j}_{n}(t)$ to \eqref{NLS} are global and 
\[
\| v^{j}_{n}(0,\cdot) \|_{L^{a}_{t}(\R^{+})}+\| v^{j}_{n}(0,\cdot) \|_{L^{\infty}_{t}(\R^{+})}\lesssim
\|\psi^{j}\|_{H^{1}}
\]
for all $j\geq J_{\ast}$. 
We have the following \\
\textit{Claim 1.} There exists $1\leq j_{0}<J_{\ast}$ such that
\begin{equation}\label{limitVClaim}
\limsup_{n\to\infty}\| v^{j_{0}}_{n}(0,\cdot) \|_{L^{a}_{t}(\R^{+})}=\infty.
\end{equation}
Indeed, assume by contradiction that for all $1\leq j<J_{\ast}$, 
\[
\limsup_{n\to\infty}\| v^{j}_{n}(0,\cdot) \|_{L^{a}_{t}(\R^{+})}<\infty. 
\]
Then, there exists $C>0$ such that for $n$ big enough  $\| v^{j}_{n}(0,\cdot) \|_{L^{a}_{t}(\R^{+})}\leq C$.
With the same argument developed in \cite[Section 4]{AdamiFukuHolmer} and using the Lemma \ref{Perturbationlemma}
we can find that for $n$ sufficiently large $\| u_{n}(0,\cdot) \|_{L^{a}_{t}(\R^{+})}<\infty$, which is a contradiction.
This proves the Claim 1.

Next, by reordering, we can choose $1\leq J^{1}_{\ast}\leq J_{\ast}$ such that
\begin{equation}\label{DefJi}
\begin{split}
\limsup_{n\to\infty}\| v^{j}_{n}(0,\cdot) \|_{L^{a}_{t}(\R^{+})}&=\infty \quad \text{for $1\leq j\leq J^{1}_{\ast}$,}\\
\limsup_{n\to\infty}\| v^{j}_{n}(0,\cdot) \|_{L^{a}_{t}(\R^{+})}&<\infty \quad \text{for $j>J^{1}_{\ast}$.}
\end{split}
\end{equation}
Following \cite[Proposition 5.6]{KiiVisan2008}, for each $m$, $n \in \N$, we define $j(m,n)\in \left\{1,2, \ldots, J^{1}_{\ast}\right\}$ 
and a compact interval $I^{m}_{n}$ of the form $[0, T]$ such that
\begin{equation}\label{CompactI}
\sup_{1\leq j \leq J^{1}_{\ast}}\| v^{j}_{n}(0,\cdot) \|_{L^{a}_{t}(I^{m}_{n})}
=\| v^{j(m,n)}_{n}(0,\cdot) \|_{L^{a}_{t}(I^{m}_{n})}=m.
\end{equation}
Thus, since $J^{1}_{\ast}<\infty$, using the pigeonhole principle, there is a $1\leq j_{1}\leq  J^{1}_{\ast}$ such that
 $j(m,n)=j_{1}$ for infinitely many $m$ and for infinitely many $n$.

There are two scenarios to consider.

\textbf{Scenario 1:} More than one $\psi^{j}\neq 0$.  By \eqref{Normsortho} and \eqref{PythagoreanEx} we infer that
\[
E(v^{j_{1}}_{n}(t))[M(v^{j_{1}}_{n}(t))]^{\sigma_{c}}<\delta_{c},
\]
where $j_{1}$ is defined above. Here we recall that $\delta_{c}$  is the critical  level.
Using \eqref{CompactI} and \eqref{criticald}, by definition of $v^{j_{1}}_{n}(t)$  we have
\begin{equation}\label{Lcontra}
\limsup_{m\to\infty}\limsup_{n\to\infty}\sup_{t\in I^{m}_{n}}N(v^{j_{1}}_{n}(t))[M(v^{j_{1}}_{n}(t))]^{\sigma_{c}}\geq L.
\end{equation}
By reordering we can choose $j_{1}=1$. Now, we need the following result.
We denote $\text{NLS}(t)\phi$ the solution to equation \eqref{NLS} with initial data $\phi$. We recall that $\varphi_{n}=u_{n}(0)$.

\begin{lemma}\label{ExpanBoundedNLS}
Let $T>0$ fixed and assume that the solution $u_{n}(t)=\text{NLS}(t)\varphi_{n}$  exists up to time $T$ for all $n\geq1$, and 
\begin{equation}\label{KunBounded}
\limsup_{n\to\infty}\sup_{t\in [0,T]}K(u_{n}(t))<\infty.
\end{equation}
 Then for all $1\leq j\leq J$, the nonlinear profiles $v_{n}^{j}(t)$ exist up to time $T$ and  
\begin{align}\label{DecomK}
	K(u_{n}(t))&=\sum^{J}_{j=1}K(v_{n}^{j}(t))+K(\tilde{W}^{J}_{n}(t))+o_{n,J}(1), \\\label{DecomNV}
	N(u_{n}(t))&=\sum^{J}_{j=1}N(v_{n}^{j}(t))+N(\tilde{W}^{J}_{n}(t))+o_{n,J}(1) 
\end{align}
for $t\in [0,T]$, 
where $o_{n,J}\rightarrow 0$ as $n$, $J\to \infty$ uniformly on $0\leq t\leq T$. Here $\tilde{W}^{J}_{n}(t)=\text{NLS}(t)\tilde{W}^{J}_{n}$, where $\tilde{W}^{J}_{n}$ is the remainder given in \eqref{second-initial}.
\end{lemma}
Let us assume, for a moment, that Lemma \ref{ExpanBoundedNLS} is true. We recall that
\[
1=M(u_{n}(t))\geq M(v^{j}_{n}(t)).
\]
Since more that one $\psi^{j}\neq 0$, combining \eqref{criticalscaling}, \eqref{Lcontra} 
and \eqref{DecomNV} we obtain
\[\begin{split}
L&\geq \limsup_{n\to\infty}\sup_{t\in [0,\infty)}N(u_{n}(t))[M(u_{n}(t))]^{\sigma_{c}}\\
&\geq
\lim_{J\to\infty}\limsup_{m\to\infty}\limsup_{n\to\infty}\sup_{t\in I^{m}_{n}}\sum^{J}_{j=1}N(v^{j}_{n}(t))[M(v^{j}_{n}(t))]^{\sigma_{c}}\\
&>\limsup_{m\to\infty}\limsup_{n\to\infty}\sup_{t\in I^{m}_{n}}N(v^{1}_{n}(t))[M(v^{1}_{n}(t))]^{\sigma_{c}}\geq L,
\end{split}\]
which is a contradiction.

\textbf{Scenario 2:} $\psi^{1}\neq 0$ and $\psi^{j}=0$ for every $j\geq2$. In this case we have
\[
\phi_{n}=e^{-it^{j}_{n}\partial^{2}_{x}}\psi^{1}+W^{1}_{n}, \quad 
\lim_{n\to\infty}\|[e^{it\partial^{2}_{x}} {W}^{1}_{n}](0) \|_{L^{a}_{t}(\R)}=0.
\]
If $t^{j}_{n}\rightarrow -\infty$, we obtain
\[
\| [e^{it\partial^{2}_{x}} \phi_{n}](0) \|_{L^{a}_{t}(\R^{+})}\leq 
\| [e^{it\partial^{2}_{x}} \psi^{1}](0) \|_{L^{a}_{t}([-t^{1}_{n},\infty))}+
\|[e^{it\partial^{2}_{x}} {W}^{1}_{n}](0) \|_{L^{a}_{t}(\R)}\rightarrow 0
\]
as $n\to \infty$. By inequality above, using a standard continuity argument and Proposition \ref{smalldataexistence}  we may shows that
$\|u_{n}(0, \cdot)\|_{L^{a}_{t}(\R^{+})}<\infty$ for $n$ big enough, which is a contradiction. A similar claim is valid for  
$t^{j}_{n}\rightarrow \infty$.  Thus, by \eqref{second-initial} we can write
\[
\phi_{n}(x)=v^{1}(x,0)+\tilde{W}^{1}_{n}(x).
\]
Notice that by \textit{Claim 1} above $\|v^{1}(0, \cdot)\|_{L^{a}_{t}(\R^{+})}=\infty$.
Moreover, from \eqref{Normsortho} and \eqref{PythagoreanEx} we can show that
\[
M(v^{1}(t))\leq 1, \quad E(v^{1}(t))\leq \delta_{c}.
\] 
Notice also that from \eqref{Lcontra} we have that
\[
\sup_{t\in [0,\infty)}N(v^{1}(t))[M(v^{{1}}(t))]^{\sigma_{c}}=\limsup_{m\to\infty}\sup_{t\in I^{m}_{1}}N(v^{1}(t))[M(v^{{1}}(t))]^{\sigma_{c}}\geq L,
\]
and, by \eqref{DecomNV},  
\[\begin{split}
L&\geq \limsup_{n\to\infty}\sup_{t\in [0,\infty)}N(u_{n}(t))[M(u_{n}(t))]^{\sigma_{c}}\\
&\geq\limsup_{m\to\infty}\sup_{t\in I^{m}_{1}}N(v^{1}(t))[M(v^{{1}}(t))]^{\sigma_{c}}\geq L.
\end{split}
\]
Hence, we get
\begin{equation}\label{LequalV}
\sup_{t\in [0,\infty)}N(v^{1}(t))[M(v^{{1}}(t))]^{\sigma_{c}}=L.
\end{equation}
Next, we claim that
\[
M(v^{1}(t))=1.
\]
Suppose, by contradiction that $M(v^{1}(t))<1$, then
\[
E(v^{1}(t))[M(v^{1}(t))]^{\sigma_{c}}< \delta_{c}.
\]
Hence, by \eqref{LequalV} and definition of $\delta_{c}$ (see \eqref{criticald}), we obtain $\| v^{j}(0, \cdot)\|_{L^{a}_{t}(\R^{+})}<\infty$, which is a contradiction.
Similarly, we can may show that $E(v^{1}(t))= \delta_{c}$. 
Let $u_{c}(t)$ be the solution to equation \eqref{NLS} with initial data $u_{c}(0)=v^{1}(x, 0)$. Then $u_{c}(t)$ is global ($T_{+}=\infty$) and satisfies
\[\begin{split}
M(u_{c}(t))&=M(v^{1}(t))=1, \\
E(u_{c}(t))&=E(v^{1}(t))=\delta_{c},
\end{split}\]
and
\[
\sup_{t\in [0,\infty)}N(u_{c}(t))=\sup_{t\in [0,\infty)}N(v^{1}(t))=L.
\]
Moreover, we have  that $\| u_{c}(0, \cdot)\|_{L^{a}_{t}(\R^{+})}=\infty$.
Finally, we consider the precompactness of $\Omega$. Indeed, notice that for any time sequence $\left\{t_{n}\right\}_{n\geq1}$,
the sequence $u_{c}(t_{n})$ is uniformly bound in $H^{1}(\R)$ and satisfies
\[
M(u_{c}(t_{n}))=1, \quad E(u_{c}(t_{n}))=\delta_{c}\quad \text{and}\quad \| u_{c}(0, \cdot)\|_{L^{a}_{t}((t_{n},\infty))}=\infty.
\]
Hence, regarding $u_{c}(t_{n})$ as the foregoing $\phi_{n}$, and using an argument similar to the above, we can find that
there exists sequences $\left\{\tau^{1}_{n}\right\}_{n\geq1}\subset \R$, $\left\{W^{1}_{n}\right\}_{n\geq1}\subset H^{1}(\R)$ and
$\xi^{1}\in H^{1}(\R)$ such that
\begin{equation}\label{NewUC}
u_{c}(t_{n})=e^{-i\tau^{1}_{n}\partial^{2}_{x}}\xi^{1}+W^{1}_{n},
\end{equation}
with
\[
M(\xi^{1})=1, \quad \lim_{n\to\infty}M(W^{1}_{n})=0 \quad \text{and}\quad \lim_{n\to\infty}E(W^{1}_{n})=0.
\]
In particular, by inequality \eqref{IneEnergy} we infer that
\begin{equation}\label{NormW}
\lim_{n\to\infty}\| W^{1}_{n}  \|_{H^{1}_{x}}\leq \lim_{n\to\infty}\left[\frac{16}{\eta}E(W^{1}_{n})+M(W^{1}_{n})\right]=0.
\end{equation}
Moreover, arguing as Scenario 2, we obtain that $\tau^{1}_{n}\to \tau_{\ast}<\infty$ as $n\to\infty$. Hence,  putting together
\eqref{NewUC}  and \eqref{NormW} we see that $u_{c}(t_{n})$ converges in $H^{1}(\R)$. This proves the proposition.
\end{proof}

\begin{proof}[{Proof of Lemma \ref{ExpanBoundedNLS}}] We follow the ideas of the proof of \cite[Lemma 3.9]{Guevara2012} and \cite[Lemma 3.2]{DinhForceHaja2020}.
First of all notice that, by definition of the intervals $I^{m}_{n}$ (see \eqref{CompactI}), there exists $m_{0}\in \N$ sufficiently large  such that $[0, T]\subset I^{m_{0}}_{n}$  for infinitely many $n$.
We recall that there exists $J_{\ast}\in \N$ such that 
\begin{equation}\label{LimNweJs}
\| v^{j}_{n}(0,\cdot) \|_{L^{a}_{t}(\R^{+})}+\| v^{j}_{n}(0,\cdot) \|_{L^{\infty}_{t}(\R^{+})}\lesssim
\|\psi^{j}\|_{H^{1}}
\end{equation}
for all $j\geq J_{\ast}$. In particular, it follows that for $j\geq J_{\ast}$
\begin{equation}\label{LimitzeroV}
\|  v_{n}^{j}(0,\cdot) \|_{L^{a}_{t}([0,T])}+\|  v_{n}^{j}(0,\cdot) \|_{L^{\infty}_{t}([0,T])} 
\lesssim \|\psi^{j}\|_{H^{1}}.
\end{equation}
Moreover, using {\eqref{DefJi}} we infer that 
\begin{equation}\label{Bounj11}
\limsup_{n\to \infty}\|  v_{n}^{j}(0,\cdot) \|_{L^{a}_{t}([0,T])}<\infty,
\end{equation}
for all $J^{1}_{\ast}\leq j< J_{\ast}$, where $J^{\ast}_{1}$ is given in \eqref{DefJi}. Finally, by \eqref{CompactI} there exists a constant $C=C(T)$ (possibly depending on time $T$) such that
\begin{equation}\label{Jestrebound}
\|  v_{n}^{j}(0,\cdot) \|_{L^{a}_{t}([0,T])}\leq C(T) \quad \text{for $n$ sufficiently large},
\end{equation}
for all $1\leq j< J^{1}_{\ast}$.

Now, by reordering, we can choose $0\leq J_{1}\leq J_{\ast}$ such that
\begin{itemize}[leftmargin=5mm]
		\item $1\leq j\leq J_{1}$: the time shifts $t^{j}_{n}=0$ for every $n$. We write $J_{1}=0$ if there is no such $j$ that satisfies the condition. Notice that \eqref{Ortho} implies that $J_{1}=\left\{0,1\right\}$. 
   \item $J_{1}+1\leq j< J_{\ast}$: the time shifts $\lim_{n\to\infty}|t^{j}_{n}|=\infty$. We write $J_{1}=J_{0}$ if there is no such $j$ that satisfies the condition.
		\end{itemize}
We treat only the case $J_{1}=1$. The case $J_{1}=0$ is easier. Now we set
\[
R:=\max\left\{1, \limsup_{n\to\infty}\sup_{t\in [0,T]}K(u_{n}(t))\right\}<\infty.
\]
Moreover,  $T_{\ast}$ denote the maximal forward time such that the solution $v^{1}(t)$ satisfies
$\sup_{t\in [0,T_{\ast}]}K(v^{1}(t))\leq 2R$. Notice that we may assume that $[0, T_{\ast}]\subset I^{m_{0}}_{n}$ for infinitely
many $n$. 

Using the conservation of mass and \eqref{Normsortho}, 
\[
M( v_{n}^{j}(t))=M( v^{j}(-t^{n}_{j})) \lesssim 1+ M(\psi^{j}).
\]
By inequality above (for $j=1$) and the Gagliardo-Nirenberg inequality \eqref{Ga-Ni-Inequ} we get
\begin{equation}\label{33Pertu}
\| v^{1}(0,\cdot)\|_{L^{\infty}_{t}([0,T_{\ast}])}\leq \sup_{t\in [0,T_{\ast}]}[M( v^{1}(t))K( v^{1}(t))]^{\frac{1}{4}}
\lesssim R^{\frac{1}{4}}.
\end{equation}
Here we recall that $t^{1}_{n}=0$ for all $n$. On the other hand,
if $t^{j}_{n}\to -\infty$, we obtain
\begin{equation*}
\begin{split}
\| v_{n}^{j}(0,\cdot)\|_{L^{\infty}_{t}([0,T_{\ast}])}&\lesssim
\| v_{n}^{j}(t)-e^{i(t-t^{j}_{n})\partial^{2}_{x}}\psi^{j}\|_{L^{\infty}_{t}([0,T_{\ast}], H_{x}^{1}(\R))}\\
&+\|e^{i(t-t^{j}_{n})\partial^{2}_{x}}\psi^{j}\|_{L^{\infty}_{t}([0,T_{\ast}] H_{x}^{1}(\R))}\\
&\lesssim \| v_{n}^{j}(t)-e^{i(t-t^{j}_{n})\partial^{2}_{x}}\psi^{j}\|_{L^{\infty}_{t}([0,T_{\ast}], H_{x}^{1}(\R))}+1
\lesssim 1,
\end{split}
\end{equation*}
for $n$ sufficiently large.  A similar statement is valid for $t^{j}_{n}\to \infty$. Therefore, for each $2\leq j< J_{\ast}$ 
we get the estimate 
\begin{equation}\label{Boundtinfi}
\limsup_{n\to\infty}\| v_{n}^{j}(0,\cdot)\|_{L^{\infty}_{t}([0,T_{\ast}])}<\infty.
\end{equation}

Combining \eqref{Bounj11}, \eqref{Jestrebound}, \eqref{33Pertu} and \eqref{Boundtinfi}  we have that for each $1\leq j< J_{\ast}$
\begin{equation}\label{Ulti44pertu}
\| v^{j}(0,\cdot)\|_{L^{a}_{t}([0,T_{\ast}])\cap L^{\infty}_{t}([0,T_{\ast}])}
\leq C(R, T_{\ast})
\end{equation}
for $n$ sufficiently large

\textit{Step 1.} We show that for all $t\in [0, T_{\ast}]$
\begin{equation}\label{Step1Limit}
N(u_{n}(t))=\sum^{J}_{j=1}N(v_{n}^{j}(t))+N(\tilde{W}^{J}_{n}(t))+o_{n,J}(1) 
\end{equation}
where $o_{J,n}(1)\to$ as $n\to \infty$.  Indeed, we write
\[
{u}^{J}_{n}(x,t)=\sum^{J}_{j=1}v^{j}_{n}(x,t)+\tilde{W}^{J}_{n}(x, t).
\]
Notice that ${u}^{J}_{n}$ satisfies (see \eqref{second-initial})
\[
\begin{cases} 
i\partial_{t}u^{J}+\partial^{2}_{x}u^{J}+\delta(x) |u^{J}|^{p-1}u^{J}=\delta(x) e^{J}_{n}, \quad \text{$x\in \R$, $t\in \R$ } \\
{u}^{J}_{n}(0,x)=\varphi_{n},
\end{cases} 
\]
where
\[
e^{J}_{n}=\left[\sum^{J}_{j=1}F(v^{j}_{n})-F\(\sum^{J}_{j=1}v^{j}_{n}\)\right]
+[(F({u}^{J}_{n})-F(\tilde{W}^{J}_{n}))-F({u}^{J}_{n}-\tilde{W}^{J}_{n})]
\]
with $F(z):=|z|^{p-1}z$. We want to apply Proposition \ref{pertucompact}. We begin by estimating 
$\|{u}^{J}_{n}(0,\cdot) \|_{L_{t}^{a}[0, T_{\ast}]\cap L^{\infty}_{t}[0, T_{\ast}]}$. Indeed,
by Proposition \ref{Estimates-S}, Duhamel formula and \eqref{limnew} we infer that
\begin{equation}\label{AssypWT}
\lim_{J\to\infty}\lim_{n\to\infty}\|\tilde{W}^{J}_{n}(0,\cdot) \|_{L^{a}_{t}([0, T_{\ast}])\cap L^{\infty}_{t}([0, T_{\ast}])}
=0.
\end{equation}
Then, by \eqref{LimitzeroV} and \eqref{Ulti44pertu}  we obtain that there exists a constant 
$C>0$ such that for $J< J_{\ast}$,
\[
 \limsup_{n\rightarrow\infty}\|{u}^{J}_{n}(0,\cdot) \|_{L_{t}^{a}[0, T_{\ast}]\cap L^{\infty}_{t}[0, T_{\ast}]}\lesssim 1.
\]
Moreover, for $J\geq J_{\ast}$, and using \eqref{LimitzeroV} and \eqref{Normsortho} we infer that
\begin{equation}\label{uniforinJ}
\limsup_{n\rightarrow\infty} \|{u}^{J}_{n}(0,\cdot) \|_{L_{t}^{a}[0, T_{\ast}]\cap L^{\infty}_{t}[0, T_{\ast}]}\lesssim 1,
\end{equation}
uniformly in $J$. On the other hand, we claim that
\begin{equation}\label{Claimerror}
\lim_{J\to\infty}\lim_{n\to\infty}\| e^{J}_{n}(0, \cdot) \|_{L_{t}^{b}[0, T_{\ast}]\cap L^{\infty}_{x}[0, T_{\ast}]}=0.
\end{equation}
Indeed, by \eqref{AssypWT} and \eqref{uniforinJ}  we have
\[\begin{split}
&\lim_{J\to\infty}\lim_{n\to\infty}\| [(F({u}^{J}_{n})-F(\tilde{W}^{J}_{n}))-F({u}^{J}_{n}-\tilde{W}^{J}_{n})](0, \cdot)  \|_{L_{t}^{b}[0, T_{\ast}]\cap L^{\infty}_{t}[0, T_{\ast}]}\\
&\lesssim \lim_{J\to\infty}\lim_{n\to\infty}\| [|{u}^{J}_{n}|^{p-1}\tilde{W}^{J}_{n}+|\tilde{W}^{J}_{n}|^{p}](0, \cdot)  \|_{L_{t}^{b}[0, T_{\ast}]\cap L^{\infty}_{t}[0, T_{\ast}]}\\
&\lesssim \lim_{J\to\infty}\lim_{n\to\infty}[\|\tilde{W}^{J}_{n}(0,\cdot) \|_{L^{a}_{t}([0, T_{\ast}])\cap L^{\infty}_{t}([0, T_{\ast}])}
+\|\tilde{W}^{J}_{n}(0,\cdot) \|^{p}_{L^{a}_{t}([0, T_{\ast}])\cap L^{\infty}_{t}([0, T_{\ast}])}]\\
&=0.
\end{split}\]
Here we have used that 
\[\begin{split}
\| [|{u}^{J}_{n}|^{p-1}\tilde{W}^{J}_{n}](0, \cdot) \|_{L_{t}^{b}[0, T_{\ast}]\cap L^{\infty}_{t}([0, T_{\ast}])}\\
\leq \|{u}^{J}_{n}(0, \cdot)\|^{p-1}_{L_{t}^{a}[0, T_{\ast}]\cap L^{\infty}_{t}([0, T_{\ast}])}
\|\tilde{W}^{J}_{n}(0,\cdot) \|_{L^{a}_{t}([0, T_{\ast}])\cap L^{\infty}_{t}([0, T_{\ast}])}.
\end{split}
\]
In addition, using the inequality
\[\begin{split}
&\left\|\left[\sum^{J}_{j=1}F(v^{j}_{n})-F\(\sum^{J}_{j=1}v^{j}_{n}\)\right](0, \cdot)\right\|
_{L^{b}_{t}([0, T_{\ast}])\cap L^{\infty}_{t}([0, T_{\ast}])}\\
&\lesssim \sum_{j\neq k}\||v^{j}_{n}|^{p-1}|v^{k}_{n}|\|_{L^{b}_{t}([0, T_{\ast}])\cap L^{\infty}_{t}([0, T_{\ast}])}
\end{split}
\]
together with the orthogonality \eqref{Ortho}, one can easily show (see proof of Theorem 1.4 in \cite{AdamiFukuHolmer}) that 
\[
\lim_{J\to\infty}\lim_{n\to\infty}\left\|\left[\sum^{J}_{j=1}F(v^{j}_{n})-F\(\sum^{J}_{j=1}v^{j}_{n}\)\right](0, \cdot)\right\|
_{L^{b}_{t}([0, T_{\ast}])\cap L^{\infty}_{t}([0, T_{\ast}])}=0.
\]
This proves the claim \eqref{Claimerror}. Thus, Lemma \ref{Perturbationlemma} implies 
\begin{equation}\label{LimitUUJ}
\lim_{J\rightarrow \infty}\lim_{n\rightarrow \infty}
\| u_{n}(0,\cdot)-u_{n}^{J}(0,\cdot)\|_{L^{a}_{t}([0,T_{\ast}])\cap L^{\infty}_{t}([0,T_{\ast}])}=0.
\end{equation}
Then,  from \eqref{LimitUUJ}, we can repeat the same argument developed in \cite[Proposition 3.1]{AdamiFukuHolmer} to obtain
\begin{equation}\label{LIMITinN}
N(u_{n}(t))=\sum^{J}_{j=1}N(v_{n}^{j}(t))+N(\tilde{W}^{J}_{n}(t))+o_{n,J}(1), \quad \text{for all $t\in [0,T_{\ast}]$}.
\end{equation}

\textit{Step 2.} Conclusion. First, notice that putting together \eqref{PythagoreanEx} and \eqref{Limone}, by the conservation 
the energy we get
\begin{equation}\label{EnerH}
\begin{split}
E(u_{n}(t))&=\sum^{J}_{j=1}E(v_{n}^{j}(t))+E(\tilde{W}^{J}_{n}(t))+o_{n,J}(1) \quad \text{for all $t\in [0,T_{\ast}]$}.
\end{split}
\end{equation}
Then, combining \eqref{LIMITinN} and \eqref{EnerH} we get \eqref{DecomK}.
Finally, we also note that $T\leq T_{\ast}$. Indeed, by contradiction assume that
$T_{\ast}<T$. Since $t^{1}_{n}=0$ for all $n$, by \eqref{DecomK} it follows that 
\[
\sup_{t\in [0,T_{\ast}]}K(v^{1}(t))
\leq \sup_{t\in [0,T_{\ast}]}K(u_{n}(t))\leq \sup_{t\in [0,T]}K(u_{n}(t))\leq R,
\] 
 which is a contradiction with the choice of $T_{\ast}$. This proves that $T\leq T_{\ast}$.
\end{proof}

\begin{proof}[{Proof of Theorem \ref{Th1}}]
Assume that $\delta_{c}<\infty$, then the Proposition \ref{Compactness-critical} implies that there exists a critical element 
$u_{c,0}\neq 0$ such that the corresponding solution to equation \eqref{NLS} verifies that the set 
$\left\{u(t): t\geq 0\right\}\subset H^{1}(\R)$ is precompact in $H^{1}(\R)$. Now, by inequality \eqref{IneG} 
and using the same argument as in \cite[Theorem 6.1]{HolmerRoudenko2008} we obtain $u_{c,0}= 0$, which is a contradiction. This proves that $\delta_{c}=+\infty$,
which implies  Theorem \ref{Th1}.
\end{proof}

%%%%%%%%%%%%%%%%%%%%%%%%%%%%%%%%%%%%%%%%%%%%%%%%%%%%%%%%%%%%%%%%%%%%%%%%%%%%%%%%%%%%%%%%%%%%%%%%%%%%%%%%%%%%%%%%%%%

\section{Scattering results below and above the ground states threshold}\label{S:2} 

In this section we show the Theorems \ref{ScattebelowQ} and \ref{ScatteaboveQ}.

\begin{proof}[{Proof of Theorem \ref{ScattebelowQ}}]
We proceed in two steps.\\
\textit{Step 1.} 
We show that there exists $\theta=\theta(Q, u_{0})>0$ small such that
\begin{equation}\label{kinecMass-ine}
K(u(t))[M(u(t))]^{\sigma_{c}}\leq (1-\theta)K(Q)[M(Q)]^{\sigma_{c}} \quad \text{for all $t\in [0, T_{+})$.}
\end{equation}
Indeed, from the Gagliardo-Nirenberg inequality \eqref{Ga-Ni-Inequ}  we obtain
\begin{align}\nonumber
	E(u(t))[M(u(t))]^{\sigma_{c}}&=\frac{1}{2}K(u(t))[M(u(t))]^{\sigma_{c}}-\frac{1}{p+1}N(u(t))[M(u(t))]^{\sigma_{c}}\\\nonumber
	&\geq \frac{1}{2}K(u(t))[M(u(t))]^{\sigma_{c}}-\frac{1}{p+1}[K(u(t))]^{\frac{p+1}{4}}[M(u(t))]^{\sigma_{c}+\frac{p+1}{4}}\\
	&=\frac{1}{2}K(u(t))[M(u(t))]^{\sigma_{c}}-\frac{1}{p+1}(K(u(t))[M(u(t))]^{\sigma_{c}})^{\frac{p+1}{4}} \nonumber
	\\\label{InterEM}
	&=\Phi(K(u(t))[M(u(t))]^{\sigma_{c}}),
\end{align}
where
\[
\Phi(x):=\frac{1}{2}x-\frac{1}{p+1} x^{\frac{p+1}{4}}.
\]
Combining the identities \eqref{PohoIdenti} and \eqref{OptConst} we see that
\[\begin{split}
\Phi(K(Q)[M(Q)]^{\sigma_{c}})&=\frac{1}{2}K(Q)[M(Q)]^{\sigma_{c}}-\frac{2}{p+1}N(Q)[M(Q)]^{\sigma_{c}}\\
&=\frac{(p-3)}{2(p+1)}K(Q)[M(Q)]^{\sigma_{c}}=E(Q)[M(Q)]^{\sigma_{c}}.
\end{split}\]
Then, by assumption \eqref{EMass} and conservation laws,
\[
\Phi(K(u(t))[M(u(t))]^{\sigma_{c}})\leq E(u_{0})[M(u_{0})]^{\sigma_{c}}
<E(Q)[M(Q)]^{\sigma_{c}}=\Phi(K(Q)[M(Q)]^{\sigma_{c}}),
\]
for every $t\in [0, T_{+})$. Thus, by continuity and assumption \eqref{KinectMass}  we infer that 
\begin{equation}\label{KMm1}
K(u(t))[M(u(t))]^{\sigma_{c}}<K(Q)[M(Q)]^{\sigma_{c}}\quad \text{for all $t\in [0, T_{+})$.}
\end{equation}
Again, by assumption \eqref{EMass} we have that there exists $a>0$ small such that
\begin{equation}\label{AssuptiNe}
E(u_{0})[M(u_{0})]^{\sigma_{c}}<(1-a)E(Q)[M(Q)]^{\sigma_{c}}.
\end{equation}
In addition, from identities \eqref{EnergyIdentities} and \eqref{OptConst} we get
\[
E(Q)[M(Q)]^{\sigma_{c}}=\frac{(p-3)}{2(p+1)}K(Q)[M(Q)]^{\sigma_{c}}=\frac{(p-3)}{4(p+1)}(K(Q)[M(Q)]^{\sigma_{c}})^{\frac{(p+1)}{4}}.
\]
Thus, by equation above and inequality \eqref{InterEM} we obtain
\[
\frac{(p+1)}{(p-3)}\(\frac{K(u(t))[M(u(t))]^{\sigma_{c}}}{K(Q)[M(Q)]^{\sigma_{c}}}\)-
\frac{4}{p-3}\(\frac{K(u(t))[M(u(t))]^{\sigma_{c}}}{K(Q)[M(Q)]^{\sigma_{c}}}\)^{\frac{p+1}{4}}\leq 1-a.
\]
Now we set
\[
\Psi(x)=\frac{(p+1)}{(p-3)}x-\frac{4}{p-3}x^{\frac{p+1}{4}} \quad \text{ for all $x\in (0,1)$}.
\]
Notice that $\Psi(0)=0$, $\Psi(1)=1$ and $\Psi^{\prime}(x)>0$ for all $x\in (0,1)$. Since
\[
\Psi\(\frac{K(u(t))[M(u(t))]^{\sigma_{c}}}{K(Q)[M(Q)]^{\sigma_{c}}}\)\leq 1-a,
\]
it follows from \eqref{KMm1} that there exists $\theta=\theta(a)>0$ such that $x<1-\theta$, which implies \eqref{kinecMass-ine}.\\

\textit{Step 2.} Conclusion. By Gagliardo-Nirenberg inequality \eqref{Ga-Ni-Inequ} and \eqref{kinecMass-ine} we deduce
\begin{align*}
	N(u(t))[M(u(t))]^{\sigma_{c}}&\leq 	[K(u(t))]^{\frac{p+1}{4}}[M(u(t))]^{\frac{p+1}{4}}[M(u(t))]^{\sigma_{c}}\\
		&= (K(u(t))[M(u(t))]^{\sigma_{c}})^{\frac{p+1}{4}}\\
		&<(1-\theta)^{\frac{p+1}{4}}(K(Q)[M(Q)]^{\sigma_{c}})^{\frac{p+1}{4}} \quad \text{for all $t\in [0, T_{+})$.}
\end{align*}
 Thus, since
\[
(K(Q)[M(Q)]^{\sigma_{c}})^{\frac{p+1}{4}}=(K(Q)[M(Q)])^{\frac{p+1}{4}}[M(Q)]^{\sigma_{c}}=N(Q)[M(Q)]^{\sigma_{c}},
\] 
we find that
\[
N(u(t))[M(u(t))]^{\sigma_{c}}<(1-\theta)^{\frac{p+1}{4}}N(Q)[M(Q)]^{\sigma_{c}}\quad \text{for all $t\in [0, T_{+})$,}
\]
which implies \eqref{InequSc}. So the scattering criterion, Theorem \ref{Th1}, tell us that $u(t)$ scatters.
This completes the proof of Theorem \ref{ScattebelowQ}.
\end{proof}

Before establishing the following result, we recall that the virial quantity satisfies the following identities
\[
V^{\prime}(t)=4\IM\int_{\R}x\partial_{x}u(x,t)\overline{u}(x,t)dx
\]
and
\begin{align}\label{Viria11}
	V^{\prime\prime}(t)&=8(K(u(t))-\frac{1}{2}N(u(t))\\\label{Viria22}
	&=4(p+1)E(u(t))-2(p-3)K(u(t))\\\label{Viria33}
	&=16E(u(t))+4\frac{(3-p)}{p+1}N(u(t)).
	\end{align}

We use the following Cauchy-Schwartz inequality.

\begin{lemma}\label{CauchyS}
Let  $f\in H^{1}(\R)$ such that $|x|f\in L^{2}(\R)$. Then
\begin{equation}\label{InequBanica}
\(\IM \int_{\R}x\partial_{x}f\overline{f} dx\)^{2}
\leq  \int_{\R}|x|^{2}|f|^{2}dx \left[K(f)-\frac{[N(f)]^{\frac{4}{p+1}}}{M(f)}\right].
\end{equation}
\end{lemma}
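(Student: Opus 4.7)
The plan is to combine a Cauchy--Schwartz inequality, applied in a sharp form that exploits the polar decomposition $f = |f|e^{i\phi}$, with the Gagliardo--Nirenberg inequality \eqref{Ga-Ni-Inequ} applied to the nonnegative function $|f|$.

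First I would derive the pointwise identity
\[
[\IM(\partial_x f\,\overline{f})]^{2} = |\partial_x f|^{2}|f|^{2} - |f|^{2}\,[\partial_x |f|]^{2},
\]
which follows from $|\partial_x f\,\overline{f}|^{2} = |\partial_x f|^{2}|f|^{2}$ together with $\RE(\partial_x f\,\overline{f}) = \tfrac{1}{2}\partial_x |f|^{2} = |f|\,\partial_x |f|$. Dividing by $|f|^{2}$ (on the set where $|f|>0$) and integrating in $x$ gives
\[
\int_{\R}\frac{[\IM(\partial_x f\,\overline{f})]^{2}}{|f|^{2}}\,dx = K(f) - K(|f|).
\]

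Next, writing
\[
\IM\int_{\R} x\,\partial_x f\,\overline{f}\,dx = \int_{\R}\bigl(x\,|f|\bigr)\cdot\frac{\IM(\partial_x f\,\overline{f})}{|f|}\,dx
\]
and applying the scalar Cauchy--Schwartz inequality, we obtain
\[
\Bigl(\IM\int_{\R} x\,\partial_x f\,\overline{f}\,dx\Bigr)^{2} \leq \int_{\R} x^{2}|f|^{2}\,dx \cdot \bigl[K(f) - K(|f|)\bigr].
\]
To finish, I would apply \eqref{Ga-Ni-Inequ} to $|f|\in H^{1}(\R)$: since $N(|f|)=|f(0)|^{p+1}=N(f)$ and $M(|f|)=M(f)$, raising both sides to the power $\tfrac{4}{p+1}$ yields
\[
[N(f)]^{\frac{4}{p+1}} \leq K(|f|)\,M(f),
\]
so $K(|f|) \geq [N(f)]^{\frac{4}{p+1}}/M(f)$, which plugged into the Cauchy--Schwartz bound gives the desired inequality \eqref{InequBanica}.

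The main technical obstacle is the legitimacy of dividing by $|f|$ when $f$ has zeros; this I would handle by the standard regularization $|f|_{\varepsilon}:=\sqrt{|f|^{2}+\varepsilon}$, for which the identity $|\partial_x f|^{2}|f|^{2}_{\varepsilon} - |f|^{2}[\partial_x |f|_{\varepsilon}]^{2} = [\IM(\partial_x f\,\overline{f})]^{2} + \varepsilon\,|\partial_x f|^{2}$ (or the equivalent chain-rule computation) is unambiguous, and then letting $\varepsilon\downarrow 0$ by dominated convergence, using $f\in H^{1}(\R)$ and $|x|f\in L^{2}(\R)$ to control the integrands uniformly.
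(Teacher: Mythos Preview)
Your proof is correct but follows a genuinely different route from the paper's. The paper argues via the quadratic-phase trick (in the spirit of Banica): for each $\lambda\in\R$ one applies the Gagliardo--Nirenberg inequality \eqref{Ga-Ni-Inequ} directly to $e^{i\lambda x^{2}}f$, observing that $M$ and $N$ are unchanged while $K(e^{i\lambda x^{2}}f)$ is an explicit quadratic polynomial in $\lambda$; the resulting statement $\Phi(\lambda)\ge 0$ for all $\lambda$ forces the discriminant of $\Phi$ to be nonpositive, which is exactly \eqref{InequBanica}. You instead work pointwise, using the identity $|\partial_{x}f|^{2}=[\partial_{x}|f|]^{2}+[\IM(\partial_{x}f\,\overline{f})]^{2}/|f|^{2}$, then apply the scalar Cauchy--Schwarz inequality and invoke \eqref{Ga-Ni-Inequ} only once, on the modulus $|f|$. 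The paper's approach is cleaner in that it completely avoids division by $|f|$ and hence the regularization step you need to handle zeros; your approach, on the other hand, yields the sharper intermediate bound with $K(|f|)$ in place of $[N(f)]^{4/(p+1)}/M(f)$ before the final use of Gagliardo--Nirenberg, and makes the uncertainty-principle structure of the inequality more transparent.
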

\begin{proof}
We follow a similar argument as in \cite[Lemma 2.1]{DR5}. 
Given  $\lambda\in \R$,  a simple calculation shows that
\[   
K(e^{i\lambda |x|^{2}}f)=4\lambda^{2}\int_{\R}|x|^{2}|f|^{2}dx 
+4\lambda \IM \int_{\R}x\partial_{x}f\overline{f}dx
+K(f).
\]
Moreover, $M(e^{i\lambda |x|^{2}}f)=M(f)$ and $N(e^{i\lambda |x|^{2}}f)=N(f)$. Now we define the  quadratic polynomial in $\lambda$,
\begin{align*}
\Phi(\lambda)&=K(e^{i\lambda |x|^{2}}f)-\frac{[N(e^{i\lambda |x|^{2}}f)]^{\frac{4}{p+1}}}{M(e^{i\lambda |x|^{2}}f)}\\
&=4\lambda^{2}\int_{\R}|x|^{2}|f|^{2}dx
+4\lambda \IM \int_{\R}x\partial_{x}f\overline{f}dx
+\(K(f)-\frac{[N(f)]^{\frac{4}{p+1}}}{M(f)}\)
\end{align*}
By using the Gagliardo-Nirenberg inequality \eqref{Ga-Ni-Inequ} we infer that $\Phi(\lambda)\geq 0$, this implies that
the discriminant of $\Phi$ is non-positive, which implies the inequality \eqref{InequBanica}.
\end{proof}

We are now ready to give the proof of Theorem \ref{ScatteaboveQ}.

\begin{proof}[Proof of Theorem \ref{ScatteaboveQ}] We adapt here a proof given in \cite[Theorem 1.4]{DR5}.
From \eqref{Viria22} and  \eqref{Viria33} we have that
\[
K(u(t))=\frac{1}{2(p-3)}(4(p+1)E(u(t))- V^{\prime\prime}(t)), \quad N(u(t))=\frac{(p+1)}{4(p-3)}(16E(u(t))- V^{\prime\prime}(t)).
\]
Notice that as  $N(u(t))\geq 0$ it follows  $V^{''}(t)\leq 16E(u(t))$. Moreover, by inequality \eqref{InequBanica} and identities above
we get 
\[\begin{split}
(V^{'}(t))^{2}\leq 16V(t)\(\frac{1}{2(p-3)}(4(p+1)E(u(t))- V^{\prime\prime}(t))
\right.
\\
\left.
-[M(u(t))]^{-1}\left[
\frac{(p+1)}{4(p-3)}(16E(u(t))- V^{\prime\prime}(t))
\right]^{\frac{4}{p+1}}
\).
\end{split}\]
We set 
\begin{equation}\label{Phifun}
\Phi(x)=\frac{1}{2(p-3)}(4(p+1)E(u_{0})-x)-[M(u_{0})]^{-1}\(
\frac{(p+1)}{4(p-3)}(16E(u_{0})- x)
\)^{\frac{4}{p+1}}
\end{equation}
for all $x\leq 16 E(u_{0})$. Thus we have that
\begin{equation}\label{InVirial}
(z^{\prime}(t))^{2}\leq 4\Phi(V^{\prime\prime}(t)),\quad \mbox{where} \quad z(t):=\sqrt{V(t)}.
\end{equation}
Now notice that
\[
\Phi^{\prime}(x)=-\frac{1}{2(p-3)}+\frac{[M(u_{0})]^{-1}}{(p-3)}\(
\frac{(p+1)}{4(p-3)}(16E(u_{0})- x)\)^{-\frac{(p-3)}{p+1}}.
\]
Since $p>3$, we deduce that exists a unique point $x_{0}$ such that $\Phi^{\prime}(x_{0})=0$, where $x_{0}$ satisfies
\[
\frac{1}{2(p-3)}=\frac{[M(u_{0})]^{-1}}{(p-3)}\(
\frac{(p+1)}{4(p-3)}(16E(u_{0})- x_{0})\)^{-\frac{(p-3)}{p+1}}.
\]

At the same time, as $p>3$, we have that $\Phi(x)$ is decreasing on $(-\infty, x_{0})$
and increasing on the interval $(x_{0}, 16 E(u_{0}))$. Moreover, by equation above we infer that
\begin{equation}\label{Xizero}
[M(u_{0})]^{-1}=\frac{1}{2}\(
\frac{(p+1)}{4(p-3)}(16E(u_{0})- x_{0})\)^{\frac{(p-3)}{p+1}},
\end{equation}
which implies
\[\begin{split}
\Phi(x_{0})&=\frac{1}{2(p-3)}(4(p+1)E(u_{0})-x_{0})\\
&-\frac{1}{2}\(
\frac{(p+1)}{4(p-3)}(16E(u_{0})- x_{0})\)^{\frac{(p-3)}{p+1}}\(
\frac{(p+1)}{4(p-3)}(16E(u_{0})- x_{0})
\)^{\frac{4}{p+1}}\\
&=\frac{x_{0}}{8}.
\end{split}\]
By \eqref{Xizero} we obtain
\begin{equation}\label{primer-ident}
\(\frac{p+1}{4(p-3)}\)[M(u_{0})]^{\sigma_{c}}(16E(u_{0})-x_{0})=2^{\sigma_{c}}.
\end{equation}
Similarly, from identities \eqref{EnergyIdentities} and \eqref{OptConst} we get
\begin{equation}\label{second-ident}
2\(\frac{2(p+1)}{p-3}\)E(Q)[M(Q)]^{\sigma_{c}}=2^{\sigma_{c}}.
\end{equation}
Therefore, combining \eqref{primer-ident} and \eqref{second-ident} we deduce 
\[
\frac{16E(Q)[M(Q)]^{\sigma_{c}}}{[M(u_{0})]^{\sigma_{c}}(16E(u_{0})-x_{0})}=1,
\]
equivalently,
\begin{equation}\label{PosiIdent}
\frac{E(u_{0})M(u_{0})]^{\sigma_{c}}}{E(Q)[M(Q)]^{\sigma_{c}}}\(1-\frac{x_{0}}{16E(u_{0})}\)=1.
\end{equation}
In particular,
\begin{equation}\label{Positivexzero}
x_{0}=16E(u_{0})\(1-\frac{E(Q)M(Q)]^{\sigma_{c}}}{E(u_{0})[M(u_{0})]^{\sigma_{c}}}\).
\end{equation}
Notice that by assumption \eqref{assum11}, we infer that $x_{0}\geq 0$.
On the other hand, in view of assumption \eqref{assum33}, \eqref{EnergyIdentities} and \eqref{Viria33} we get 
\begin{equation}\label{Vmxero}
\begin{split}
	V^{\prime\prime}(0)&=16E(u_{0})-\frac{4(3-p)}{(p+1)}N(u_{0})\\
	&>16E(u_{0})-\frac{4(p-3)}{(p+1)}\frac{N(Q)[M(Q)]^{\sigma_{c}}}{[M(u_{0})]^{\sigma_{c}}}\\
	&=16E(u_{0})\(1- \frac{E(Q)[M(Q)]^{\sigma_{c}}}{E(u_{0})[M(u_{0})]^{\sigma_{c}}}\)\\
	&=x_{0}.
\end{split}
\end{equation}
Furthermore, the assumption \eqref{assump44} means
\begin{equation}\label{positivederivate}
z^{\prime}(0)\geq 0
\end{equation}
and assumption \eqref{assump22} implies that
\begin{equation}\label{boundz}
(z^{\prime}(0))^{2}\geq\frac{x_{0}}{2}=4\Phi(x_{0}).
\end{equation}

We prove the theorem in two steps as follows.

\textit{Step 1.} We show that there exists $\delta_{0}>0$ such that 
\begin{equation}\label{EstimativeV}
V^{\prime\prime}(t)\geq x_{0}+\delta_{0},
\quad \text{for all $t\in [0, T_{+})$}.
\end{equation}
Indeed, by continuity and \eqref{Vmxero}, we infer that there exists $\delta_{1}>0$ and $t_{0}>0$ such that
\begin{equation}\label{Vsmall}
V^{\prime\prime}(t)>x_{0}+\delta_{1} \quad \text{for all $t\in [0, t_{0})$}.
\end{equation}
On the other hand, taking $t_{0}$ more smaller if necessary, we may assume that
\begin{equation}\label{Zprimeposi}
z^{\prime}(t_{0})>2\sqrt{\Phi(x_{0})}.
\end{equation}
Indeed, if $z^{\prime}(0)>2\sqrt{\Phi(x_{0})}$, then by continuity we have the result. If $z^{\prime}(0)=2\sqrt{\Phi(x_{0})}$,
it follows from \eqref{Vmxero} that 
\[
z^{\prime\prime}(0)=\frac{1}{z(0)}\(\frac{V^{\prime\prime}(0)}{2}-(z(0))^{2} \)
>\frac{1}{z(0)}\(\frac{x_{0}}{2}- \frac{x_{0}}{2}\)
=0,
\]
hence, \eqref{Zprimeposi} holds for some $t_{0}>0$. 
We define $\epsilon_{0}>0$ such that
\begin{equation}\label{Zepsy}
z^{\prime}(t_{0})\geq 2\sqrt{\Phi(x_{0})}+2\epsilon_{0}.
\end{equation}
We claim that
\begin{equation}\label{Zineqfor}
z^{\prime}(t)> 2\sqrt{\Phi(x_{0})}+\epsilon_{0} \quad \text{for all $t\geq t_{0}$}.
\end{equation}
Indeed, suppose that the claim \eqref{Zineqfor} is false and let
\[
t_{1}:=\inf\left\{t\geq t_{0}; z^{\prime}(t)\leq  2\sqrt{\Phi(x_{0})}+\epsilon_{0}  \right\}.
\]
By using the  continuity of $z^{\prime}(t)$, we get
\begin{equation}\label{zcontra}
z^{\prime}(t)\geq 2\sqrt{\Phi(x_{0})}+\epsilon_{0}, \quad \text{for all $t\in [t_{0}, t_{1}]$}
\end{equation}
with 
\begin{equation}\label{znopoint}
z^{\prime}(t_{1})=2\sqrt{\Phi(x_{0})}+\epsilon_{0}.
\end{equation}
By inequality \eqref{InVirial} and \eqref{zcontra} we see that
\begin{equation}\label{Doubleinequ}
(2\sqrt{\Phi(x_{0})}+\epsilon_{0})^{2}\leq (z^{\prime}(t))^{2}\leq 4\Phi(V^{\prime\prime}(t))\quad 
\text{for all $t\in [t_{0}, t_{1}]$}.
\end{equation}
Thus, we deduce $\Phi(V^{\prime\prime}(t))>\Phi(x_{0})$ for all $t\in [t_{0}, t_{1}]$, that is,  $V^{\prime\prime}(t)\neq x_{0}$
and by continuity $V^{\prime\prime}(t)>x_{0}$ for all $t\in [t_{0}, t_{1}]$.
Next we show that there exists a universal positive constant $L$ such that
\begin{equation}\label{universalconstan}
V^{\prime\prime}(t)\geq  x_{0}+\frac{\sqrt{\epsilon_{0}}}{L} \quad
\text{for all $t\in [t_{0}, t_{1}]$}.
\end{equation}
We consider two cases:\\
(i) If $V^{\prime\prime}(t)\geq x_{0}+1$, then for $L>0$ sufficiently large \eqref{universalconstan} holds.\\
(ii) Assume that  $x_{0}<V^{\prime\prime}(t)<x_{0}+1$. By the Taylor expansion of $\Phi$ around of $x_{0}$, we infer that
there exists $r>0$ such that
\[
\Phi(x)\leq \Phi(x_{0})+r(x-x_{0})^{2} \quad \text{when $|x-x_{0}|\leq 1$},
\]
which implies, by \eqref{Doubleinequ},
\[
4\sqrt{\Phi(x_{0})}\epsilon_{0}<4 a(V^{\prime\prime}(t)-x_{0})^{2},
\]
and taking $L=\sqrt{r}(\Phi(x_{0}))^{-\frac{1}{4}}$, \eqref{universalconstan} holds. 

Now, we may use \eqref{Zineqfor} and \eqref{universalconstan} to obtain
\begin{align*}
	z^{\prime\prime}(t_{1})&=\frac{1}{z(t_{1})}\(\frac{V^{\prime\prime}(t)}{2}-(z(t_{1}))^{2} \)\\
	                 &\geq\frac{1}{z(t_{1})}\(\frac{x_{0}}{2}+ \frac{\sqrt{\epsilon_{0}}}{L}- (2\sqrt{\Phi(x_{0})}+\epsilon_{0})^{2} \)\\
									&\geq\frac{1}{z(t_{1})}\(\frac{\sqrt{\epsilon_{0}}}{L}-4\epsilon_{0}\sqrt{\Phi(x_{0})}-\epsilon^{2}_{0}\)>0
\end{align*}
when $\epsilon_{0}$ is small enough, which is a contradiction with \eqref{znopoint}. Thus we obtain the claim \eqref{Zineqfor}.  
Consequently, we have 
\begin{equation}\label{FinalInequ}
V^{\prime\prime}(t)\geq  x_{0}+\frac{\sqrt{\epsilon_{0}}}{L}, \quad
\text{for all $t\in [t_{0}, T_{+})$}.
\end{equation}
Hence, using \eqref{Vsmall} and \eqref{FinalInequ} we obtain \eqref{EstimativeV} with $\delta_{0}=\min\left\{\frac{\sqrt{\epsilon_{0}}}{L}, \delta_{1} \right\}$.

\textit{Step 2.} Conclusion. 
Combining \eqref{EstimativeV}, \eqref{PosiIdent} and \eqref{EnergyIdentities} we have 
\begin{align*}
N(u(t))	[M(u(t))]^{\sigma_{c}}&=\frac{(p+1)}{4(p-3)}(16E(u_{0})-V^{\prime\prime}(t))[M(u_{0})]^{\sigma_{c}}\\
&\leq \frac{(p+1)}{4(p-3)}(16E(u_{0})-x_{0}-\delta_{0})[M(u_{0})]^{\sigma_{c}}\\
&= \frac{4(p+1)}{(p-3)}E(Q)	[M(Q)]^{\sigma_{c}}-\delta_{0} \frac{(p+1)}{4(p-3)}[M(u_{0})]^{\sigma_{c}}\\
&=N(Q)[M(Q)]^{\sigma_{c}}-\delta_{0} \frac{(p+1)}{4(p-3)}[M(u_{0})]^{\sigma_{c}}\\
&=(1-\theta)N(Q)[M(Q)]^{\sigma_{c}} \quad \text{for all $t\in [0, T_{+})$},
\end{align*}
with $\theta=\delta_{0}\frac{(p+1)[M(u_{0})]^{\sigma_{c}}}{4(p-3)N(Q)[M(Q)]^{\sigma_{c}}}$. Thus, by the scattering criterion in Theorem \ref{Th1}, we infer that the solution $u(t)$ scatters in $H^{1}(\R)$ forward in time.
\end{proof}

\section{Long time dynamics at threshold}

 In this section we show the Theorem \ref{DynaThre}. We start with the following result.

\begin{lemma}\label{Compact}
If $\left\{u_{n}\right\}_{n\geq 1}$ is a sequence of $H^{1}(\R)$ such that
\[
E(u_{n})=E(Q),\quad M(u_{n})=M(Q)\quad \text{and}\quad 
 \lim_{n\rightarrow \infty}K(u_{n})=K(Q),
\]
then there exists $\theta\in \R$ such that, possibly for a subsequence only,
\[
u_{n}\rightarrow e^{i\theta}Q \quad \text{strongly in $H^{1}(\R)$ as $n\to \infty$.}
\]
\end{lemma}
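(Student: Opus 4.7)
The plan is to extract a weak limit $u^{\ast}$ of $\{u_n\}$, identify it as $e^{i\theta}Q$ via the sharp Gagliardo--Nirenberg inequality, and then upgrade weak convergence to strong convergence by matching the $H^1$ norms.

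First I would extract information on $N(u_n)$ from the hypotheses. Since
\[
N(u_n)=\frac{p+1}{2}K(u_n)-(p+1)E(u_n),
\]
the conditions $K(u_n)\to K(Q)$ and $E(u_n)=E(Q)$ combined with the Pohozaev identities \eqref{PohoIdenti} and \eqref{EnergyIdentities} give $N(u_n)\to N(Q)$. Moreover, $M(u_n)+K(u_n)$ is bounded, so $\{u_n\}$ is bounded in $H^{1}(\R)$; passing to a subsequence, there exists $u^{\ast}\in H^{1}(\R)$ such that $u_n\rightharpoonup u^{\ast}$ weakly in $H^{1}(\R)$. By the compact embedding $H^{1}[-1,1]\hookrightarrow C[-1,1]$, $u_n(0)\to u^{\ast}(0)$, so $N(u_n)\to N(u^{\ast})=N(Q)$.

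Next, weak lower semicontinuity yields $K(u^{\ast})\leq K(Q)$ and $M(u^{\ast})\leq M(Q)$. Applying the sharp Gagliardo--Nirenberg inequality \eqref{Ga-Ni-Inequ} to $u^{\ast}$ gives
\[
N(Q)=N(u^{\ast})\leq [K(u^{\ast})]^{\frac{p+1}{4}}[M(u^{\ast})]^{\frac{p+1}{4}}\leq [K(Q)]^{\frac{p+1}{4}}[M(Q)]^{\frac{p+1}{4}}=N(Q),
\]
so every inequality is an equality. Thus $K(u^{\ast})=K(Q)$, $M(u^{\ast})=M(Q)$, and $u^{\ast}$ is an optimizer for \eqref{Ga-Ni-Inequ}. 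The key step, which I expect to be the main obstacle, is identifying $u^{\ast}$ as a phase rotation of $Q$: the Euler--Lagrange equation for the Gagliardo--Nirenberg quotient reduces, after fixing the scaling by the constraints $M(u^{\ast})=M(Q)$ and $K(u^{\ast})=K(Q)$, to the stationary equation \eqref{EllipticE}. Since $u^{\ast}$ need not be real, one factors out a constant phase so that $|u^{\ast}|$ is positive and symmetric (one can appeal to rearrangement since $N$ depends only on $|u^{\ast}(0)|$); then the uniqueness of the positive symmetric solution of \eqref{EllipticE} proved in \cite{HOLMER2020123522} forces $u^{\ast}=e^{i\theta}Q$ for some $\theta\in\R$. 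The scaling bookkeeping (checking that $M(Q)=M(\alpha Q(\lambda\cdot))$ and $K(Q)=K(\alpha Q(\lambda\cdot))$ together force $|\alpha|=\lambda=1$) is straightforward.

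Finally, to upgrade weak to strong convergence in $H^{1}(\R)$, I would use that $K(u_n)\to K(Q)=K(u^{\ast})$ and $M(u_n)=M(Q)=M(u^{\ast})$, so $\|u_n\|_{H^{1}}\to \|u^{\ast}\|_{H^{1}}$. Combined with $u_n\rightharpoonup u^{\ast}$ in the Hilbert space $H^{1}(\R)$, this yields $u_n\to e^{i\theta}Q$ strongly in $H^{1}(\R)$, which is the desired conclusion.
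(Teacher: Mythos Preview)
Your proposal is correct and follows essentially the same approach as the paper's proof: extract a weak limit via boundedness, use the compact embedding $H^{1}[-1,1]\hookrightarrow C[-1,1]$ to pass $N(u_n)\to N(u^\ast)=N(Q)$, force equality in the Gagliardo--Nirenberg inequality by weak lower semicontinuity, identify the optimizer, and upgrade to strong convergence via norm convergence. The only minor difference is in the identification step: rather than arguing through the Euler--Lagrange equation and rearrangement, the paper simply invokes \cite[Proposition 1.3]{HOLMER2020123522}, which already states that equality in \eqref{Ga-Ni-Inequ} forces $u^\ast(x)=e^{i\theta}\lambda Q(rx)$; the constraints $N(u^\ast)=N(Q)$ and $M(u^\ast)=M(Q)$ then pin down $\lambda=r=1$ directly.
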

\begin{proof}
Let $\left\{u_{n}\right\}_{n\geq 1}\subset H^{1}(\R)$ be a sequence such that $E(u_{n})=E(Q)$, $M(u_{n})=M(Q)$ and $\lim_{n\rightarrow \infty}K(u_{n})=K(Q)$. We observe that
\begin{equation}\label{CoverN}
\lim_{n\rightarrow \infty}N(u_{n})=N(Q).
\end{equation}
Notice also that the sequence $\left\{u_{n}\right\}_{n\geq 1}$ is bounded in $H^{1}(\R)$. Thus, as $H^{1}(\R)$ is reflexive, we infer that 
there exists $v\in H^{1}(\R)$ such that, possibly for a subsequence only, $u_{n}\rightharpoonup v$ weakly in $H^{1}(\R)$ and 
$u_{n}(x)\to v(x)$ a.e. $x\in \R$. Moreover, since $H^{1}[-1,1]\hookrightarrow C[-1,1]$
is compact, it follows that 
\begin{equation}\label{Pointconvergence}
u_{n}(0)\to v(0) \quad \text{ as $n\rightarrow\infty$.}
\end{equation}
Putting together \eqref{CoverN} and \eqref{Pointconvergence} we get
\begin{equation}\label{NQNv}
N(v)=\lim_{n\rightarrow \infty}N(u_{n})=N(Q).
\end{equation}
Thus, from the weak convergent, \eqref{NQNv} and the Gagliardo-Nirenberg inequality \eqref{Ga-Ni-Inequ}  we have
\begin{equation}\label{Unique}
\begin{split}
	N^{\frac{4}{p+1}}(Q)&=N^{\frac{4}{p+1}}(v)\\
	&\leq M(v)K(v)\\
	&\leq [\lim_{n\rightarrow \infty}M(u_{n})][\lim_{n\rightarrow \infty}K(u_{n})]\\
	&\leq M(Q)K(Q).
\end{split}
\end{equation}
Therefore, $N^{\frac{4}{p+1}}(v)=M(v)K(v)$, which implies that $v(x)=e^{i\theta}\lambda Q(rx)$ 
for some $\theta$, $\lambda$, $r\in \R$ (see \cite[Proposition 1.3]{HOLMER2020123522}). 
Since $N(Q)=N(v)$ and $M(Q)=M(v)$ we see that 
$\lambda=1$ and $r=1$. Finally, by using the fact that $M(v)=M(Q)=M(u_{n})$ and $K(v)=K(Q)=\lim_{n\rightarrow\infty}K(u_{n})$,
we infer that $u_{n}\rightarrow v=e^{i\theta} Q$ strongly in $H^{1}(\R)$ as $n\to \infty$. This completes the proof of lemma.
\end{proof}

\begin{lemma}\label{Blow-up-Lemma}
Let $u_{0}\in H^{1}(\R)$. Let $u(t)$ be the solution to Cauchy problem \eqref{NLS} defined
on the maximal forward time $[0, T_{+})$. If
\begin{equation}\label{GGblowup}
\sup_{t\in [0, T_{+})}G(u(t))\leq -\eta,
\end{equation}
for some $\eta>0$, then the solution $u(t)$ blows up in finite time, i.e., $T_{+}<\infty$.
\end{lemma}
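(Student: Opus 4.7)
The plan is to run a localized virial argument, since $u_0$ is only assumed to be in $H^1(\R)$ without finite variance, so the identity $V''(t) = G(u(t))$ stated in the introduction cannot be applied directly. I will exploit that the delta nonlinearity is concentrated at a single point, so the nonlinear contribution to a truncated virial identity is identical to that of the full one, provided the cutoff coincides with $x^2$ near the origin.

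Concretely, I would fix $\phi \in C^\infty(\R)$ even, non-negative, with $\phi(x) = x^2$ for $|x|\leq 1$, $\phi$ constant for $|x|\geq 2$, and $\phi''(x)\leq 2$ everywhere; then set $\phi_R(x) = R^2 \phi(x/R)$, so that $\phi_R \equiv x^2$ on $|x|\leq R$, $\phi_R''\leq 2$, and $|\phi_R^{(4)}(x)|\lesssim R^{-2}$ with support in $R\leq|x|\leq 2R$. Define $V_R(t) := \int_\R \phi_R(x)|u(x,t)|^2 dx$. Repeating the derivation of $V''(t)=G(u)$ with $\phi_R$ in place of $x^2$ (the continuity of the momentum flux $\IM(u_x\bar u)$ across $x=0$, which follows from the jump condition $[u_x]_0 = -|u(0)|^{p-1}u(0)$, makes the integrations by parts legitimate), one obtains
\begin{equation*}
V_R''(t) = 4\int_\R \phi_R''(x)|u_x|^2 dx - \int_\R \phi_R^{(4)}(x)|u|^2 dx - 4N(u(t)),
\end{equation*}
the coefficient $-4$ in front of $N(u)$ being unchanged because $\phi_R \equiv x^2$ in a neighborhood of $\supp \delta = \{0\}$.

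Writing $V_R''(t) = G(u(t)) + \mathcal{E}_R(t)$ with
\begin{equation*}
\mathcal{E}_R(t) = 4\int_\R (\phi_R''(x) - 2)|u_x|^2 dx - \int_\R \phi_R^{(4)}(x)|u|^2 dx,
\end{equation*}
the choice $\phi''\leq 2$ renders the first integrand nonpositive, while the second is bounded by $C R^{-2} M(u_0)$ via conservation of mass. Hence $\mathcal{E}_R(t) \leq C R^{-2} M(u_0)$, and the hypothesis $G(u(t))\leq -\eta$ yields
$V_R''(t)\leq -\eta + C R^{-2} M(u_0)$. Fixing $R$ large enough that $C R^{-2} M(u_0)\leq \eta/2$ gives $V_R''(t)\leq -\eta/2$ on $[0,T_+)$.

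Integrating twice on $[0,t]$ then produces $V_R(t)\leq V_R(0) + V_R'(0)\,t - (\eta/4)t^2$. The initial data $V_R(0)\lesssim R^2 M(u_0)$ and $|V_R'(0)|\lesssim R\|u_0\|_{H^1}^2$ are finite for this fixed $R$, and since $V_R(t)\geq 0$ while the right-hand side becomes negative for $t$ large enough, one obtains a contradiction with $T_+=\infty$. Thus $T_+<\infty$. The main technical point is the careful derivation of the truncated virial identity in the presence of the delta nonlinearity, which is the only nontrivial step; all subsequent estimates are standard once the identity is in place.
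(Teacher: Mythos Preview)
Your proposal is correct and follows essentially the same approach as the paper: both run a localized virial argument with a weight equal to $x^{2}$ near the origin so that the nonlinear contribution is exactly $-4N(u)$, bound the kinetic error using $\phi_R''\le 2$ and the mass error by $O(R^{-2})M(u_0)$, and conclude by contradiction. The paper's parameter $\epsilon$ is simply your $R^{-1}$; the arguments are otherwise identical.
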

\begin{proof}
The proof is essentially given in \cite{HOLMER2020123522}. For the convenience of the reader, we give the details.
Consider the local virial identity,
\[
I(t)=\int_{\R}a(x)|u(x,t)|^{2}dx,
\]
then a calculation leads to
\[
I^{\prime\prime}(t)=4\int_{\R}a^{\prime\prime}(x)|\partial_{x}u(x,t)|^{2}dx-
2a^{\prime\prime}(0)N(u(t))-\int_{\R}a^{\prime\prime\prime\prime}(x)|u(x,t)|^{2}dx,
\]
for any $a\in C^{4}(\R)$ with $a(0)=a^{\prime}(0)=a^{\prime\prime\prime}(0)=0$. Moreover, 
we can choose $a\in C^{4}(\R)$ such that $0\leq a(x)\leq C\epsilon^{-2}$, $a^{\prime\prime}(x)\leq 2$, $a^{\prime\prime}(0)= 2$ and 
$|a^{\prime\prime\prime\prime}(x)|\leq C\epsilon^{2}$ for all $x\in \R$; see proof of Theorem 1.4 in \cite{HOLMER2020123522}.
Thus, we obtain that
\[
I^{\prime\prime}(t)\leq 8K(u(t))-4N(u(t))+C\epsilon^{2}M(u_{0})=G(u(t))+C\epsilon^{2}M(u_{0}),
\]
for $t\in [0, T_{+})$. Therefore, the hypothesis \eqref{GGblowup} implies that
\[
I^{\prime\prime}(t)\leq -\eta+C\epsilon^{2}M(u_{0})\quad \text{for all $t\in [0, T_{+})$}.
\]
Suppose that $T_{+}=\infty$. Choosing $\epsilon$ sufficiently small, we find $t_{\ast}>0$ such that $I(t_{\ast})<0$,
which is a contradiction because $I(t)\geq 0$. Thus, we conclude the proof of blow-up.
\end{proof}

We are now ready to give the proof of Theorem \ref{DynaThre}.

\begin{proof}[{Proof of Theorem \ref{DynaThre}}]
First of all note that since \eqref{EMassThre} is scale-invariant under the scaling
\begin{equation}\label{Scaling}
u^{\lambda}_{0}(x)=\lambda^{\frac{1}{p-1}}u_{0}(\lambda x) \quad \lambda>0,
\end{equation}
taking $\lambda^{\frac{(p-3)}{(p-1)}}=\frac{M(Q)}{M(u_{0})}$, we may assume that
\begin{equation}\label{equalmass-ener}
M(u_{0})=M(Q), \quad E(u_{0})=E(Q).
\end{equation}

(i) If $u_{0}$ satisfies \eqref{DynaKinectMass}, then again by the scaling above we have 
\begin{equation}\label{Kinecscaling}
K(u_{0})<K(Q).
\end{equation}
We claim that 
\begin{equation}\label{ClaimKinecs}
K(u(t))<K(Q) \quad \text{for all $t\in [0, T_{+})$},
\end{equation}
where $u(t)$ is the corresponding solution to Cauchy problem \eqref{NLS} with initial data $u_{0}$ defined on 
the maximal forward time lifespan $[0, T_{+})$.  Indeed, suppose by contradiction that there exists $t^{\ast}\in [0, T_{+})$
such that $K(u(t^{\ast}))=K(Q)$. By \eqref{equalmass-ener} we infer that
\[
K(u(t^{\ast}))=K(Q), \quad M(u(t^{\ast}))=M(Q), \quad N(u(t^{\ast}))=N(Q).
\]
By the same argument as in proof of Lemma \ref{Compact} (see \eqref{Unique}) we obtain that $u(t^{\ast})=e^{i\theta}Q$ for some $\theta\in \R$. Thus, by the uniqueness
of the solution for the Cauchy problem \eqref{NLS} we see that $u(x,t)=e^{i\theta}e^{i(t-t^{\ast})}Q(x)$, which is a 
contradiction with \eqref{Kinecscaling}. This implies that \eqref{ClaimKinecs} holds. In particular, the solution $u(t)$ is global, i.e., 
$T_{+}=\infty$.

Next we consider two cases:\\
\textit{Case 1.} Suppose
\[
\sup_{t\in [0, \infty)}K(u(t))<K(Q).
\]
In this case, we have that there exists $a>0$ small such that $K(u(t))<(1-a)K(Q)$ for all $t\geq 0$. Then by the 
same argument as in proof of Theorem \ref{ScattebelowQ}-\textit{Step 2}, we can show that there exists $b>0$ small such that
\[
N(u(t))[M(u(t))]^{\sigma_{c}}< (1-b)N(Q)[M(Q)]^{\sigma_{c}}\quad \text{for all $t\in [0, T_{+})$}.
\]
Therefore, by using the scattering criterion, Theorem \ref{Th1}, we see that $u(t)$ scatters in $H^{1}(\R)$
forward in the time.\\
\textit{Case 2.} Suppose
\[
\sup_{t\in [0, \infty)}K(u(t))=K(Q).
\]
Then, by definition, there exists a sequence of positive times $t_{n}$ such that
\[
E(u(t_{n}))=E(Q),\quad M(u(t_{n}))=M(Q), \quad \lim_{n\rightarrow \infty}K(u(t_{n}))=K(Q).
\]
Notice that $t_{n}\rightarrow \infty$ as $n\rightarrow \infty$. Indeed, if $t_{n}\to t^{\ast}<\infty$, then, by continuity of the flow we have that  $u(t_{n})\rightarrow u(t^{\ast})$ in $H^{1}(\R)$, which implies that $K(u(t^{\ast}))=K(Q)$,  $M(u(t^{\ast}))=M(Q)$ and
$N(u(t^{\ast}))=N(Q)$.  By the same argument as above, there exists $\theta\in \R$ such that $u(x,t)=e^{i\theta}e^{i(t-t^{\ast})}Q(x)$, which is impossible because \eqref{ClaimKinecs}. This proves that $t^{\ast}=\infty$. Finally, we may use Lemma \ref{Compact} to obtain, up subsequence,
\[
u(t_{n}, \cdot)\rightarrow e^{i\theta} Q(\cdot) \quad \text{in $H^{1}(\R)$ as $n\rightarrow\infty$,}
\] 
for some $\theta\in \R$.\\

(ii) If  $u_{0}$ satisfies \eqref{IgualKinectMass}, then by the scaling \eqref{Scaling} we may suppose
\[
M(u_{0})=M(Q), \quad E(u_{0})=E(Q), \quad K(u_{0})=K(Q).
\]
In particular, it follows that $N(u_{0})=N(Q)$. From \eqref{Unique} we obtain that $N(u_{0})=M^{\frac{p+1}{4}}(u_{0})K^{\frac{p+1}{4}}(u_{0})$. Therefore, by uniqueness of minimizer we infer that $u_{0}(x)=e^{i\theta}Q(x)$ for some $\theta\in \R$. Finally, by using the uniqueness of the solution for \eqref{NLS} we have
$u(x,t)=e^{it}e^{i\theta}Q(x)$. This proves the statement (ii).\\

(iii) If $u_{0}$ satisfies \eqref{MayorlKinectMass}, again by the scaling \eqref{Scaling} we get

\begin{equation}\label{KQKu}
K(u_{0})>K(Q).
\end{equation}
By the same argument as in proof of Claim \eqref{ClaimKinecs} we can show that
\begin{equation}\label{KmayorQ}
K(u(t))>K(Q)\quad \text{for all $t\in [0, T_{+})$}.
\end{equation}
Next we consider two cases:\\
\textit{Case 1.} Suppose
\[
\sup_{t\in [0, T_{+})}K(u(t))>K(Q).
\]
Then there exists $\delta>0$ such that 
\[
K(u(t))\geq (1+\delta)K(Q) \quad \text{for all $t\in [0, T_{+})$}.
\]
As a consequence,  inequality above and \eqref{PohoIdenti} yield
\begin{align*}
	{N(u(t))}&=(p+1)\(\frac{1}{2}K(u(t))-E(u_{0})\)\\
	&\geq (p+1)\( \frac{(1+\delta)}{2}K(Q)-E(Q)\)\\
	&={N(Q)}+\frac{\delta(p+1)}{2}K(Q)\\
	&=\(1+\frac{\delta(p+1)}{4}\)N(Q)\quad \text{for all $t\in [0, T_{+})$}.
\end{align*}
Consequently, by inequality above and \eqref{EnergyIdentities} we see that
\begin{align*}
	G(M(t))M(u(t))&=16E(u_{0})M(u_{0})-\frac{4(p-3)}{(p+1)}N(u(t))M(u_{0})      \\
	&\leq 16E(Q)M(Q)-\(1+\frac{\delta(p+1)}{4}\)\(\frac{4(p-3)}{(p+1)}  \)N(Q)M(Q)\\
	&=-\frac{\delta(p+1)}{4}N(Q)M(Q), \quad \text{for all $t\in [0, T_{+})$}.
\end{align*}
By using Lemma \ref{Blow-up-Lemma} we then obtain that $T_{+}<\infty$ and therefore the solution $u(t)$ blows-up in finite time.\\
 \textit{Case 2.} Suppose
\[
\sup_{t\in [0, T_{+})}K(u(t))=K(Q).
\]
Then there exists a sequence $t_{n}\in [0, T_{+})$ such that $K(u(t))\rightarrow K(Q)$ as $n\to \infty$.
We may assume that, possibly for a subsequence only,  $t_{n}\rightarrow t^{\ast}\in [0, T_{+}]$. Now, 
suppose that $t^{\ast}<T_{+}<\infty$, then, by the same argument as above we infer that 
$u_{0}(x)=e^{i\theta}Q(x)$ for some $\theta\in \R$, which is a contradiction with \eqref{KQKu}. On the other hand, if 
$t^{\ast}=T_{+}<\infty$, then
\[
E(u(t_{n}))=E(Q),\quad M(u(t_{n}))=M(Q), \quad \lim_{n\rightarrow \infty}K(u(t_{n}))=K(Q).
\]
Thus, by Lemma \ref{Compact} there exists $\theta\in \R$ such that, up to a subsequence,
$u(t_{n})\rightarrow e^{i\theta} Q$ strongly in $H^{1}(\R)$ as $n\to \infty$. However,
by the blow-up alternative, this is impossible because $T_{+}<\infty$. In conclusion we have that
either $t^{\ast}<T_{+}=\infty$ or $t^{\ast}=T_{+}=\infty$. Again, if $t^{\ast}<T_{+}=\infty$, then $u_{0}(x)=e^{i\theta}Q(x)$,
which is impossible. Therefore, $t^{\ast}=T_{+}=\infty$. Hence \eqref{CoverGroundnew} follows from Lemma \ref{Compact}.
The proof of the theorem is complete.
\end{proof}

\section{Blow-up and applications}\label{Blow-up-Corollary}

In this section we show the Theorem \ref{BlowupboveQ} and  Corollaries \ref{GroundPhase} and \ref{LongGeral}.

\begin{proof}[{Proof of Theorem \ref{BlowupboveQ}}]
First we recall the definition of $z(t)$, $\Phi(x)$ and $x_{0}$ in \eqref{InVirial}, \eqref{Phifun} and \eqref{PosiIdent}, respectively.
By hypothesis \eqref{Blowup33}, we have
\begin{equation}\label{ZZeronega}
z^{\prime}(0)=\frac{V^{\prime}(0)}{2\sqrt{V(0)}}\leq 0.
\end{equation}
Moreover, the assumption \eqref{Blowup22} is equivalent to 
\[
\frac{(p-3)}{4(p+1)}\(\frac{M(u_{0})}{M(Q)}\)^{\sigma_{c}}\( \frac{N(u_{0})}{E(Q)}\)>1,
\]
where we have used \eqref{EnergyIdentities}. Thus, combining inequality above with identity \eqref{PosiIdent} and \eqref{Viria33} implies 
\begin{equation}\label{VNegative}
V^{\prime\prime}(0)<x_{0}.
\end{equation}
Now, by using the assumption \eqref{assump22} we get
\begin{equation}\label{Newboundz}
(z^{\prime}(0))^{2}\geq\frac{x_{0}}{2}=4\Phi(x_{0}).
\end{equation}
Therefore, using \eqref{VNegative} and \eqref{Newboundz} we obtain the inequality
\[
z^{\prime\prime}(0)=\frac{1}{z(0)}\(\frac{V^{\prime\prime}(0)}{2}-(z^{\prime}(0))^{2} \)
<\frac{1}{z(0)}\(\frac{x_{0}}{2}-\frac{x_{0}}{2}\)=0.
\]
Next, we show that
\begin{equation}\label{claimBlowup}
z^{\prime\prime}(t)<0, \quad \text{for all $t\in [0, T_{+})$}.
\end{equation}
Indeed, suppose by contradiction that for some $t^{\ast}\in [0, T_{+})$ we have $z^{\prime\prime}(t^{\ast})\geq 0$.
Since $z^{\prime\prime}(0)<0$, the intermediate value theorem implies that there exists $t_{0}\in (0, T_{+})$ such that
\[
z^{\prime\prime}(t_{0})=0 \quad \text{and} \quad z^{\prime\prime}(t)<0, \quad \text{for all $t\in [0, t_{0})$}.
\]
Therefore, as $z^{\prime}(0)\leq 0$, by \eqref{Newboundz} we obtain
\[
z^{\prime}(t)<z^{\prime}(0)\leq -\sqrt{4 \Phi(x_{0})},\quad \text{for all $t\in (0, t_{0}]$},
\]
which implies
\[
(z^{\prime}(t))^{2}> 4 \Phi(x_{0}) \quad \text{for all $t\in (0, t_{0}]$}.
\]
Then using \eqref{InVirial} we obtain
\[
4 \Phi(V^{\prime\prime}(t)) >4 \Phi(x_{0}), \quad \text{for all $t\in (0, t_{0}]$}.
\]
The last inequality combined with \eqref{VNegative} yields
\[
V^{\prime\prime}(t_{0})<x_{0} \quad \text{for all $t\in[0,t_{0}]$}.
\]
Finally, by using the inequality above and \eqref{Newboundz} we get
\[
z^{\prime\prime}(t_{0})=\frac{1}{z(t_{0})}\(\frac{V^{\prime\prime}(t_{0})}{2}-(z^{\prime}(t_{0}))^{2}\)
<\frac{1}{z(t_{0})}\(\frac{x_{0}}{2}- \frac{x_{0}}{2}\)=0,
\]
which is impossible by definition of $t_{0}$. Thus, \eqref{claimBlowup} holds.
Now, we proceed by contradiction. Suppose that  $T_{+}=\infty$. By $z^{\prime}(0)\leq 0$ and \eqref{claimBlowup}
we obtain
\[
z(t)=z(1)+\int^{t}_{1}z^{\prime}(t)dt<z(1)+z^{\prime}(1)(t-1)<0
\]
for $t$ large,
which is impossible because $z(t)$ is nonnegative. This completes the proof of theorem.
\end{proof}

\begin{proof}[{Proof of Corollary \ref{GroundPhase}}]Let $\gamma>0$. First we show that there exists $t_{0}$ such that $\psi^{\gamma}(t_{0})$ satisfies the assumptions \eqref{assum11}-\eqref{assump44} 
in Theorem \ref{ScatteaboveQ}. We recall that $\psi^{\gamma}(0)=e^{i\gamma x^{2}}Q(x)$ with $Q(x)=2^{\frac{1}{p-1}}e^{-|x|}$.
Notice that a direct calculation shows that
\begin{equation}\label{DerivateZ}
\partial_{x}\psi^{\gamma}(0)=e^{i\gamma x^{2}}(2i\gamma xQ+\partial_{x}Q),
\end{equation}
and therefore
\begin{equation}\label{ViReal}
\IM\int_{\R}x\partial_{x}\psi^{\gamma}(0)\overline{\psi^{\gamma}(0)}dx=2\gamma\int_{\R}x^{2}Q^{2}(x)dx>0.
\end{equation}
Then, by continuity we get 
\[
\IM\int_{\R}x\partial_{x}\psi^{\gamma}(t_{0})\overline{\psi^{\gamma}(t_{0})}>0
\]
for $t_{0}$ sufficiently small, i.e., the assumption \eqref{assump44} holds when $t_{0}$ is sufficiently small. Moreover, since
\begin{equation}\label{EnerIgual}
E(\psi^{\gamma}(t_{0}))=E(Q)+2\gamma\IM\int_{\R}x\partial_{x}\psi^{\gamma}(t_{0})\overline{\psi^{\gamma}(t_{0})}
+2\gamma^{2}\int_{\R}x^{2}|\psi^{\gamma}(t_{0})|^{2}dx,
\end{equation}
it follows that 
\[
E(\psi^{\gamma}(t_{0}))[M(\psi^{\gamma}(t_{0}))]^{\sigma_{c}}\geq E(Q)[M(Q)]^{\sigma_{c}},
\]
which implies that the assumption \eqref{assum11} holds. On the other hand, 
from equation \eqref{NLS}
we obtain
\begin{align*}
\partial_{t}N(\psi^{\gamma}(t))&=(p+1)|\psi^{\gamma}(0,t)|^{p-1}\RE[\overline{\psi^{\gamma}(0,t)}\partial_{t}\psi^{\gamma}0,t)]	\\
	&=(p+1)|\psi^{\gamma}(0,t)|^{p-1}\RE[\overline{\psi^{\gamma}(0,t)}(-i\partial^{2}_{x}\psi^{\gamma}(0,t)+ i |\psi^{\gamma}(0,t)|^{p-1}\psi^{\gamma}(0,t) )]\\
	&=-(p+1)|\psi^{\gamma}(0,t)|^{p-1}\IM[\overline{\psi^{\gamma}(0,t)}\partial^{2}_{x}\psi^{\gamma}(0,t)].
\end{align*}
By using the fact
\[
\partial^{2}_{x}\psi^{\gamma}(x,0)=e^{i\gamma x^{2}}(2i\gamma Q-4i\gamma 2^{\frac{1}{p-1}}|x|e^{-|x|}-4\gamma^{2}x^{2}Q+\partial^{2}_{x}Q)
\]
we have that
\begin{equation}\label{DerivateN1}
\left.\partial_{t}N(\psi^{\gamma}(t))\right|_{t=0}=-2\gamma(p+1)N(Q)<0.
\end{equation}
Thus, by using the fact
\[
[M(\psi^{\gamma}(0))]^{\sigma_{c}}N(\psi^{\gamma}(0))=M^{\sigma_{c}}(Q)N(Q)
\]
we conclude that assumption \eqref{assum33} holds for $t_{0}$ small. Now we set
\[
H(t)=[M(\psi^{\gamma})]^{\sigma_{c}}\(
E(\psi^{\gamma})-\frac{(\IM\int_{\R}x\partial_{x}\psi^{\gamma}(t)\overline{\psi^{\gamma}(t)})^{2}}{2\int_{\R}x^{2}|\psi^{\gamma}(t)|^{2}dx}\)
-[M(Q)]^{\sigma_{c}}E(Q)
\]
or, equivalently,
\begin{equation}\label{EquivaH}
H(t)=[M(\psi^{\gamma})]^{\sigma_{c}}\(
E(\psi^{\gamma})-\frac{1}{8}(z^{\prime}(t))^{2}\)
-[M(Q)]^{\sigma_{c}}E(Q)
\end{equation}
where (see proof of Theorem \ref{ScatteaboveQ})
\[
z(t)=\sqrt{V(t)}, \quad V(t)=\int_{\R}x^{2}|\psi^{\gamma}(x,t)|^{2}dx.
\]
Notice that putting together  \eqref{ViReal} and \eqref{EnerIgual} we deduce
\[
E(\psi^{\gamma})-\frac{(\IM\int_{\R}x\partial_{x}\psi^{\gamma}(0)\overline{\psi^{\gamma}(0)})^{2}}{2\int_{\R}x^{2}|\psi^{\gamma}(0)|^{2}dx}
=E(Q),
\]
which implies that $H(0)=0$. From \eqref{EquivaH}, we see that
\begin{equation}\label{Hderivate}
H^{\prime}(t)=-\frac{1}{4}[M(\psi^{\gamma})]^{\sigma_{c}}z^{\prime}(t)z^{\prime\prime}(t).
\end{equation}
Now, as a consequence of \eqref{ViReal} we get
\begin{equation}\label{VdigualV}
V^{\prime}(0)=8\gamma V(0).
\end{equation}
Moreover, since
\[
K(e^{i\gamma x^{2}}Q)=4\gamma^{2}V(0)+K(Q),
\]
it follows from \eqref{PohoIdenti} and \eqref{Viria11},
\begin{equation}\label{Vdosderivate}
\begin{split}
V^{\prime\prime}(0)&=8K(e^{i\gamma x^{2}}Q)-4N(e^{i\gamma x^{2}}Q)\\
&=32\gamma^{2}V(0)+8K(Q)-4N(Q)\\
&=32\gamma^{2}V(0).
\end{split}
\end{equation}
Then combining \eqref{VdigualV} and \eqref{Vdosderivate} we infer that
\[
(z^{\prime}(0))^{2}=\frac{1}{2}V^{\prime\prime}(0),
\]
which implies
\[
z^{\prime\prime}(0)=\frac{1}{z(0)}\(\frac{V^{\prime\prime}(0)}{2}-(z^{\prime}(0))^{2} \)
=0.
\]
As a consequence $H^{\prime}(0)=0$,
\[
H^{\prime\prime}(0)=-\frac{1}{4}[M(\psi^{\gamma})]^{\sigma_{c}}z^{\prime}(0)z^{\prime\prime\prime}(0).
\]
and
\[
V^{\prime\prime\prime}(0)=2z(0)z^{\prime\prime\prime}(0).
\]
Therefore, $H^{\prime\prime}(0)=-\frac{1}{8}[M(\psi^{\gamma})]^{\sigma_{c}}V^{\prime\prime\prime}(0)$. 
Finally, putting together \eqref{Viria33} and \eqref{DerivateN1} we have
\[
V^{\prime\prime\prime}(0)=-\frac{4(p-3)}{(p+1)}\left.\partial_{t}N(\psi^{\gamma}(t))\right|_{t=0}>0,
\]
and thus, $H^{\prime\prime}(0)<0$. In particular, this implies that $H(t)$ is negative when $t>0$ is small. Therefore, 
the assumption \eqref{assump22} holds for $t$ sufficiently small. Hence an application of Theorem \ref{ScatteaboveQ} shows that 
the solution $\psi^{\gamma}(t)$ scatters in $H^{1}(\R)$ forward in time.

Next, by the same argument as above  we may show that the solution $\overline{\psi^{\gamma}}(-t)$ to \eqref{NLS}
satisfies the assumptions \eqref{assump22},  \eqref{Blowup22} and \eqref{Blowup33} in Theorem \ref{BlowupboveQ}. Hence by Theorem \ref{ScatteaboveQ} 
we infer that the solution $\overline{\psi^{\gamma}}(-t)$ blow-up in positive time, i.e.,  $\psi^{\gamma}(t)$ blow up in negative time.
This proves the corollary when $\gamma>0$. The second part of the corollary, when $\gamma<0$, can be proved in a similar way.
\end{proof}

\begin{proof}[{Proof of Corollary \ref{LongGeral}}]
Consider the solution $u_{\mu}(t)$ of \eqref{NLS} with initial data $u_{\mu, 0}=e^{i\mu x^{2}}u_{0}$.
We will assume that  $\mu>0$ and $[M(u_{0})]^{\sigma_{c}}N(u_{0})<[M(Q)]^{\sigma_{c}}N(Q)$; the proof of statement (ii) is similar.

We consider two cases:\\
\textit{Case 1.} We first assume that
\begin{equation}\label{Step1}
	E(u_{\mu, 0})[M(u_{\mu, 0})]^{\sigma_{c}}\geq E(Q)[M(Q)]^{\sigma_{c}}.
\end{equation}
Then we will show that the initial data $u_{\mu, 0}$ satisfies  the hypotheses \eqref{assump22}-\eqref{assump44}  
in  Theorem \ref{ScatteaboveQ}. Indeed, a direct calculation shows that
\[\begin{split}
E(u_{\mu, 0})=E(u_{0})+2\mu\IM\int_{\R}x\cdot \partial_{x} u_{0}\overline{u_{0}}dx
+2\mu^{2}\int_{\R}|x|^{2}|u_{0}|^{2}dx
\end{split}\]
and
\begin{equation}\label{Viriequiva}
\begin{split}
\IM\int_{\R}x\cdot \partial_{x} u_{\mu,0}\overline{u_{\mu, 0}}dx=
\IM\int_{\R}x\cdot \partial_{x} u_{0}\overline{u_{0}}dx
+2\mu\int_{\R}|x|^{2}|u_{0}|^{2}dx.
\end{split}
\end{equation}
Combining the equations above, we obtain that
\[\begin{split}
E(u_{\mu, 0})-\frac{\( \IM\int_{\R}x\cdot \partial_{x} u_{\mu,0}\overline{u_{\mu, 0}}dx\)^{2}}{2\int_{\R}|x|^{2}|u_{\mu, 0}|^{2}dx}
=E(u_{0})-\frac{\( \IM\int_{\R}x\cdot \partial_{x} u_{0}\overline{u_{0}}dx\)^{2}}{2\int_{\R}|x|^{2}|u_{0}|^{2}dx}
\leq E(u_{0}),
\end{split}\]
or, equivalently
\begin{equation}\label{Conditionone}
\begin{split}
\frac{E(u_{\mu,0})[M(u_{\mu, 0})]^{\sigma_{c}}}{E(Q)[M(Q)]^{\sigma_{c}}}
\(1-\frac{\( \IM\int_{\R}x\cdot \partial_{x}  u_{\mu,0}\overline{u_{\mu, 0}}dx\)^{2}}{2E(u_{0})\int_{\R}|x|^{2}|u_{0}|^{2}dx}\)
\\
=\frac{E(u_{0})[M(u_{0})]^{\sigma_{c}}}{E(Q)[M(Q)]^{\sigma_{c}}}\leq 1,
\end{split}
\end{equation}
where we have used  \eqref{LongCondi11}. Notice that \eqref{Conditionone} implies the assumption \eqref{assump22}.
Moreover, as $[M(u_{\mu,0})]^{\sigma_{c}}N(u_{\mu, 0})=[M(u_{0})]^{\sigma_{c}}N(u_{0})$, it is clear that the assumption \eqref{assum33}
 of Theorem \ref{ScatteaboveQ} is fulfilled. 

Next we consider the  quadratic polynomial in $r$,
\[\begin{split}
P(r):=E(u_{0})+2r\IM\int_{\R}x\cdot \partial_{x}u_{0}\overline{u_{0}}dx+
2r^{2}\int_{\R}|x|^{2}|u_{0}|^{2}dx-\frac{E(Q)[M(Q)]^{\sigma_{c}}}{[M(u_{0})]^{\sigma_{c}}}.
\end{split}\]
Notice that
\[
[M(u_{\mu, 0})]^{\sigma_{c}}P(\mu)=[M(u_{\mu, 0})]^{\sigma_{c}}E(u_{\mu, 0})\(1-\frac{E(Q)[M(Q)]^{\sigma_{c}}}{E(u_{\mu, 0})[M(u_{0})]^{\sigma_{c}}}
\).
\]
Thus, using \eqref{Step1} we obtain that $P(\mu)\geq 0$. Moreover, assumption \eqref{LongCondi11} is equivalent to  $P(0)<1$.
Therefore, there exists $r_{0}\geq 0$ such that $\mu\geq r_{0}$, where $r_{0}$ satisfies $P(r_{0})=0$.  Since $P(r_{0})=0$,
by \eqref{LongCondi11} we obtain the inequality
\[
\IM\int_{\R}x\cdot \partial_{x} u_{0}\overline{u_{0}}dx+
r_{0}\int_{\R}|x|^{2}|u_{0}|^{2}dx\geq 0.
\]
Thus, by \eqref{Viriequiva} we conclude
\[
\IM\int_{\R}x\cdot \partial_{x}  u_{\mu, 0}\overline{u_{\mu, 0}}dx
\geq \mu\int_{\R}|x|^{2}|u_{0}|^{2}dx,
\]
which yields \eqref{assump44}. In view of Theorem \ref{ScatteaboveQ}, this implies that solution scatters forward in time.\\
\textit{Case 2.} Now we suppose that

\begin{equation}\label{Step22}
	E(u_{\mu, 0})[M(u_{\mu, 0})]^{\sigma_{c}}< E(Q)[M(Q)]^{\sigma_{c}}.
\end{equation}
We show that condition $[M(u_{0})]^{\sigma_{c}}N(u_{0})<[M(Q)]^{\sigma_{c}}N(Q)$ implies \eqref{KinectMass}.
Indeed, from \eqref{Step22} we have that
\[\begin{split}
 E(Q)[M(Q)]^{\sigma_{c}}>E(u_{\mu, 0})[M(u_{\mu, 0})]^{\sigma_{c}}
>\frac{1}{2}K(u_{\mu, 0})[M(u_{\mu, 0})]^{\sigma_{c}}-\frac{1}{p+1}K(Q)[M(Q)]^{\sigma_{c}},
\end{split}\]
combining the first and last term, we infer that $u_{\mu, 0}$ satisifies \eqref{KinectMass}. Hence, by \eqref{Step22} and an application of Theorem \ref{ScattebelowQ} completes the proof.
\end{proof}

\section*{Acknowledgments} 
The author would like to express their sincere thanks to the referees for many helpful comments.

\bibliographystyle{siam}
\bibliography{bibliografia}

\begin{thebibliography}{10}

\bibitem{AdamiFukuHolmer}
{\sc R.~Adami, R.~Fukuizumi, and J.~Holmer}, {\em Scattering for the ${L}^2$
  supercritical point {NLS}}, Trans. Amer. Math. Soc., 374 (2021), pp.~35--60.

\bibitem{ADAMI2001148}
{\sc R.~Adami and A.~Teta}, {\em A class of nonlinear {S}chr\"odinger equations
  with concentrated nonlinearity}, J. of Funct. Anal., 180 (2001), pp.~148
  --175.

\bibitem{BMQT}
{\sc B.~Bellazi and M.~Mintchev}, {\em Quantum field theory on star graphs},
  Phys. A: Math. Theor., 39 (2006), pp.~1101--1117.

\bibitem{QGAH}
{\sc G.~Berkolaiko, C.~Carlson, S.~Fulling, and P.~Kuchment}, {\em Quantum
  Graphs and Their Applications}, vol.~415 of Contemporary Math., American
  Math. Society, Providence, RI, 2006.

\bibitem{AFI}
{\sc V.~Caudrelier, M.~Mintchev, and E.~Ragoucy}, {\em Solving the quantum
  nonlinear {\Large s}chrödinger equation with $\delta$-type impurity}, J.
  Math. Phys., 4 (2005), pp.~1--24.

\bibitem{DinhForceHaja2020}
{\sc V.~Dinh, L.~Forcella, and H.~Hajaiej}, {\em Mass-{E}nergy threshold
  dynamics for dipolar {Q}uantum {G}ases}, Preprint arXiv:2009.05933,  (2020),
  p.~31 pages.

\bibitem{Dinh2020}
{\sc V.~D. Dinh}, {\em A unified approach for energy scattering for focusing
  nonlinear {S}chr\"odinger equations}, Discrete Contin. Dyn. Syst., 40 (2020),
  pp.~6441--6471.

\bibitem{DodsonMurphy2018}
{\sc B.~Dodson and J.~Murphy}, {\em A new proof of scattering below the ground
  state for the non-radial focusing {NLS}}, Math. Res. Lett., 25 (2018),
  pp.~1805 --1825.

\bibitem{DuyHolmerRoude2008}
{\sc T.~Duyckaerts, J.~Holmer, and S.~Roudenko}, {\em Scattering for the
  non-radial 3{D} cubic nonlinear {S}chr\"odinger equation}, Math. Res. Lett.,
  15 (2008), pp.~1233--1250.

\bibitem{DR5}
{\sc T.~Duyckaerts and S.~Roudenko}, {\em Going beyond the threshold:
  scattering and blow-up in the focusing {NLS} equation}, Comm. Math. Phys, 334
  (2015), pp.~1573--1615.

\bibitem{RJJ}
{\sc R.~Fukuizumi and L.~Jeanjean}, {\em Stability of standing waves for a
  nonlinear {\Large s}chrödinger equation with a repulsive {D}irac delta
  potential}, Discrete Contin. Dyn. Syst., 21 (2008), pp.~121--136.

\bibitem{FO}
{\sc R.~Fukuizumi, M.~Ohta, and T.~Ozawa}, {\em Nonlinear {\Large s}chrödinger
  equation with a point defect}, Ann. Inst. H. Poincaré Anal. Non Linéaire, 25
  (2008), pp.~837--845.

\bibitem{GaoWang2020}
{\sc Y.~Gao and Z.~Wang}, {\em Below and beyond the mass--energy threshold:
  scattering for the {H}artree equation with radial data in $d\geq 5$}, Angew.
  Math. Phys., 71 (2020).

\bibitem{GHW}
{\sc H.~Goodman, P.~Holmes, and M.~Weinstein}, {\em Strong {NLS}
  soliton--defect interactions}, Physica D, 192 (2004), pp.~215--248.

\bibitem{Guevara2012}
{\sc C.~D. Guevara}, {\em Global behavior of finite energy solutions to the
  d-dimensional focusing nonlinear {S}chr\"odinger equation}, Appl. Math. Res.
  Express, 2014 (2013), pp.~177--243.

\bibitem{HOLMER2020123522}
{\sc J.~Holmer and C.~Liu}, {\em Blow-up for the 1{D} nonlinear {S}chr\"odinger
  equation with point nonlinearity {I}: {B}asic theory}, J. Math. Anal. Appl.,
  483 (2020), p.~123522.

\bibitem{HolmerRoudenko2008}
{\sc J.~Holmer and S.~Roudenko.}, {\em A sharp condition for scattering of the
  radial 3{D} cubic nonlinear {S}chr\"odinger equation}, Comm. Math. Phys., 282
  (2008), pp.~435--467.

\bibitem{IkeaInu2017}
{\sc M.~Ikeda and T.~Inui}, {\em Global dynamics below the standing waves for
  the focusing semilinear {S}chr\"odinger equation with a repulsive {D}irac
  delta potential}, Anal. PDE, 10 (2017), pp.~481--512.

\bibitem{KenigMerle2006}
{\sc C.~E. Kenig and F.~Merle}, {\em Global well-posedness, scattering and
  blow-up for the energy-critical, focusing, non-linear {S}chr\"odinger
  equation in the radial case}, Invent. Math, 166 (2006), pp.~645--675.

\bibitem{KiiVisan2008}
{\sc R.~Killip and M.~Visan}, {\em Nonlinear {S}chr\"odinger equations at
  critical regularity}, in Lecture notes of the 2008 Clay summer school
  "Evolution Equations", 2008.

\bibitem{LFF}
{\sc S.~{Le~Coz}, R.~Fukuizumi, G.~Fibich, B.~Ksherim, and Y.~Sivan}, {\em
  Instability of bound states of a nonlinear {\Large s}chrödinger equation with
  a dirac potential}, Phys. D, 237 (2008), pp.~1103--1128.

\bibitem{MakMaloCho2003}
{\sc W.~C.~K. Mak, B.~A. Malomed, and P.~L. Chu}, {\em Interaction of a soliton
  with a local defect in a fiber {B}ragg grating}, J. Opt. Soc. Am. B, 20
  (2003), pp.~725--735.

\bibitem{PDEOM}
{\sc F.~A. Mehmeti, J.~von Below, and S.~Nicaise}, eds., {\em Partial
  Differential equations on multistrucutres}, no.~219 in Lecture Notes in Pure
  and Applied Mathematics, Marcel Dekker, Inc., New York, 2001.

\bibitem{Tarek2020}
{\sc T.~Saanouni}, {\em Scattering versus blow-up beyond the threshold for the
  focusing choquard equation}, J. Math. Anal. Appl., 492 (2020).

\end{thebibliography}

\end{document}